\setlist[enumerate]{label=(\alph*)}
\newtheorem{thm}{Theorem}[section]
\newtheorem*{thm*}{Theorem}
\newtheorem{cor}[thm]{Corollary}
\newtheorem{prop}[thm]{Proposition}
\newtheorem{lem}[thm]{Lemma}
\newtheorem{conj}[thm]{Conjecture}
\newtheorem*{conj*}{Conjecture}
\newtheorem{quest}[thm]{Question}
\theoremstyle{definition}
\newtheorem{defn}[thm]{Definition}
\theoremstyle{remark}
\newtheorem*{rmk}{Remark}
\newcommand{\CC}{\mathbb C}
\newcommand{\QQ}{\mathbb Q}
\newcommand{\RR}{\mathbb R}
\newcommand{\mc}[1]{\mathcal{#1}}
\DeclareMathOperator{\gal}{Gal}
\DeclareMathOperator*{\res}{Res}
\DeclareMathOperator*{\ord}{ord}
\DeclareMathOperator{\Real}{Re}
\DeclareMathOperator{\Imag}{Im}
\newcommand{\mertensconjecture}{na\"ive Mertens-type conjecture\xspace}
\numberwithin{equation}{section}
\newcommand{\adisc}{D}
\newcommand{\disc}{\Delta}
\title{On a Mertens-type conjecture for number fields}
\author{Daniel Hu}
\address{Department of Mathematics, Princeton University, Princeton NJ 08544-1000, USA}
\email{danielhu@princeton.edu}
\author{Ikuya Kaneko}
\address{Division of Physics, Mathematics and Astronomy, California Institute of Technology, Pasadena, CA 91125, USA}
\email{ikuyak@icloud.com}
\urladdr{\href{https://sites.google.com/view/ikuyakaneko/}{https://sites.google.com/view/ikuyakaneko/}}
\author{Spencer Martin}
\address{UCLA Department of Mathematics, Los Angeles, CA 90024, USA}
\email{stmartin@math.ucla.edu}
\author{Carl Schildkraut}
\address{Department of Mathematics, Massachusetts Institute of Technology, Cambridge, MA 02139-4307, USA}
\email{carlsc@mit.edu}
\subjclass[2020]{Primary: 11N56; Secondary: 11N64}
\keywords{}
\date{\today}
\begin{document}

\begin{abstract}
We introduce a number field analogue of the Mertens conjecture and demonstrate its falsity for all but finitely many number fields of any given degree. We establish the existence of a logarithmic limiting distribution for the analogous Mertens function, expanding upon work of Ng. Finally, we explore properties of the generalized Mertens function of certain dicyclic number fields as consequences of Artin factorization. 
\end{abstract}

\maketitle

\setcounter{tocdepth}{1}
\tableofcontents 

\section{Introduction}\label{sec:intro}
For a number field $K$, we define the \textit{M\"{o}bius function} $\mu(\mathfrak{a}) = \mu_{K}(\mathfrak{a})$ assigning an integer to each integral ideal $\mathfrak{a}$, according to the rule
\begin{equation*}
    \mu(\mathfrak{p}^k) \coloneqq 
    \begin{cases}
        1 & \text{if $k = 0$} \\
        -1 & \text{if $k = 1$} \\
        0 & \text{if $k \geq 2$}
    \end{cases}
\end{equation*}
for prime ideals $\mathfrak{p}$. This is extended multiplicatively by the unique factorization of ideals. The summatory function of the M\"{o}bius function is the \textit{Mertens function}
\begin{equation*}
    M_K(x) \coloneqq {\sum_{N(\mathfrak{a}) \leq x}} \mu(\mathfrak{a}),
\end{equation*}
where $\mu_K(x) \coloneqq \sum_{N(\mathfrak{a}) = x} \mu(\mathfrak{a})$ is replaced with $\frac{1}{2} \mu_K(x)$ if $x$ is an integer. With this~convention, the function $M_K(x)$ is expressed as the inverse Mellin transform of $1/s\zeta_K(s)$ via the formula
\begin{equation}\label{eq:M_K(x)}
    \frac{1}{\zeta_K(s)} = s \int_1^\infty \frac{M_K(x)}{x^{s+1}} \, dx.
\end{equation}
This relation can be seen as the definition of $M_K(x)$ as a matter of practical convenience.

\subsection{The \mertensconjecture over a number field}
For $K = \mathbb{Q}$, the classical conjecture~of Mertens~\cite{Mertens} in 1897 asserts that
\begin{equation*}
    |M(x)|~\leq \sqrt{x} \quad \text{for all} \quad x \geq 1.
\end{equation*}
Knowing that the Riemann hypothesis is equivalent to the weaker statement that $M(x) = O(x^{1/2 + \varepsilon})$ for all $\varepsilon>0$~\cite[Theorem~14.25(c)]{Titchmarsh}, this seemed to represent a viable avenue towards the Riemann hypothesis for the Riemann zeta function $\zeta(s)$. Before the landmark work of Ingham~\cite{Ingham} in 1942, preliminary calculations~of~Mertens and von Sterneck even compelled the hypothesis $x^{-1/2}|M(x)|~\leq \frac{1}{2}$ for sufficiently large $x$. Nevertheless, using lattice basis reduction algorithms, the conjecture was disproven by Odlyzko and te Riele~\cite{OdlyzkoteRiele} in 1985, who obtained explicit bounds larger in absolute value than $1$ for $x^{-1/2}M(x)$ on either side in the limit. The current record in this direction has been achieved by Hurst~\cite{Hurst} in 2018, namely
\begin{equation*}
    \liminf_{x \rightarrow \infty} \frac{M(x)}{x^{1/2}} < -1.837625 \text{ and }\limsup_{x \rightarrow \infty} \frac{M(x)}{x^{1/2}} > 1.826054.
\end{equation*}
It is now common belief that $x^{-1/2}M(x)$ grows arbitrarily large in both directions, but this has not yet been proven unconditionally~\cite{OdlyzkoteRiele}. Indeed, Ingham~\cite[Theorem~A]{Ingham} showed that,~assuming the Riemann hypothesis and that the imaginary parts of the nontrivial zeros of $\zeta(s)$ are $\mathbb{Q}$-linearly independent, the claim is true. This work marked the first serious doubt regarding the Mertens conjecture until its subsequent disproof. In fact, preliminary computational support for the linear independence hypothesis is supplied in the proof of Best and Trudgian~\cite{BT}, which builds upon a previous result of Bateman et al.~\cite{Bateman}. 

Several mathematicians have investigated analogues of this conjecture in other settings, some achieving corresponding disproofs. Anderson~\cite{Anderson} studied the case of cusp forms of sufficiently large weight $k \equiv 2 \pmod 4$ on the full modular group $\mathrm{SL}_{2}(\mathbb{Z})$. Grupp~\cite{Grupp} generalized the~work of Anderson to cusp forms of any~even weight $k$. Humphries~\cite{Humphries14} considered Mertens-type conjectures for function fields of smooth projective curves over $\mathbb{F}_q$. Our work focuses on the~case of number fields and seeks a disproof of the relevant conjecture in emulation of previous work.

Following the aforementioned works, it is most relevant to formulate a conjecture based~on the limiting behavior of the arithmetic function $M_K(x)$. Let $K$ be a number field, and write
$$M_K^-=\liminf_{x\to\infty}\frac{M_K(x)}{x^{1/2}}, \qquad M_K^+=\limsup_{x\to\infty}\frac{M_K(x)}{x^{1/2}}.$$
We now state the guiding question of our work.

\begin{conj*}[The \mertensconjecture over $K$] $-1\leq M_K^-\leq M_K^+\leq 1$.
\end{conj*}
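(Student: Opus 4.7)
The plan is to \emph{disprove} this conjecture for all but finitely many number fields of any given degree, following the spirit of Ingham's treatment of the classical Mertens conjecture. The first step is to derive an explicit formula for $M_K(x)$ via Perron-type contour integration applied to~\eqref{eq:M_K(x)}, yielding
\[
\frac{M_K(x)}{\sqrt{x}} \;=\; \sum_{\rho}\frac{x^{\rho-1/2}}{\rho\,\zeta_K'(\rho)} \;+\; (\text{error})
\]
under the working hypothesis that the nontrivial zeros $\rho=\tfrac{1}{2}+i\gamma$ of $\zeta_K$ lie on the critical line and are simple. Writing $y=\log x$ and pairing conjugate zeros, this expresses $M_K(e^y)/e^{y/2}$ as a Bohr-type almost-periodic trigonometric series in~$y$.

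Next, in the spirit of Ng's approach for $K=\mathbb{Q}$, I would establish a logarithmic limiting distribution $\nu_K$ for $M_K(e^y)/e^{y/2}$, realized as the law of the random series
\[
X_K \;=\; \sum_{\gamma > 0}\frac{2\cos(2\pi\theta_\gamma - \phi_\gamma)}{|\rho\,\zeta_K'(\rho)|},
\]
with $\{\theta_\gamma\}$ independent and uniform on $[0,1]$, assuming $\mathbb{Q}$-linear independence of the positive ordinates $\gamma$ of zeros of $\zeta_K$. A direct computation then gives
\[
\operatorname{Var}(\nu_K) \;=\; 2\sum_{\gamma > 0}\frac{1}{|\rho\,\zeta_K'(\rho)|^2}.
\]
Since any probability measure supported in $[-1,1]$ has variance at most $1$, the inequality $\operatorname{Var}(\nu_K) > 1$ forces the support of $\nu_K$ to extend outside $[-1,1]$, whence $M_K^+ > 1$ or $M_K^- < -1$.

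The third step is to bound $\operatorname{Var}(\nu_K)$ from below in terms of the field. The Riemann--von Mangoldt formula for $\zeta_K$ shows that the count of low-lying zeros grows with $\log|\disc(K)|$; combined with suitable upper bounds on $|\zeta_K'(\rho)|$ at such zeros, this should force $\operatorname{Var}(\nu_K)\to\infty$ as $|\disc(K)|\to\infty$. Since by Hermite's theorem only finitely many number fields of each degree have bounded discriminant, the \mertensconjecture must fail for all but finitely many $K$ of each degree.

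The main obstacles are twofold. First, the limiting-distribution argument is conditional on GRH for $\zeta_K$ and on linear independence of the zero ordinates; weakening these to something unconditional, or at least to a usable pair-correlation input, requires a careful reworking of Ng's framework in the Dedekind setting, with extra care devoted to the possible multiple zeros produced by Artin factorization. Second, even once $\operatorname{Var}(\nu_K)$ is shown to diverge, one needs an effective bound relating this growth to $|\disc(K)|$ uniformly across $K$; this hinges on controlling $|\zeta_K'(\rho)|$ at low zeros, which is the most delicate step and likely what forces the exceptional set to be finite rather than empty.
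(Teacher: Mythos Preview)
Your plan heads in a plausible direction but takes a route that is both harder and weaker than the paper's, and it misses the central trick that makes the paper's disproof \emph{unconditional}.

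The paper does not go through the limiting distribution $\nu_K$ to falsify the conjecture. Instead it first disposes of the degenerate cases (failure of RH for $\zeta_K$, or a multiple nontrivial zero) by a Landau-type argument showing that $M_K(x)/\sqrt{x}$ is already unbounded in those cases. In the remaining case---RH holds and all nontrivial zeros are simple---the paper invokes Jurkat's theorem on APD functions: the function $y\mapsto e^{-y/2}(M_K(e^y)+M_K^*(e^y))$, where $M_K^*(x)=-\sum_{k\ge 0}\operatorname{Res}_{s=-k}\frac{x^s}{s\zeta_K(s)}$ collects the \emph{trivial} residues, is APD, so every one of its values is a limit point of $x^{-1/2}M_K(x)$. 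Evaluating at $x\to 1^{\pm}$ gives $M_K^-\le M_K^*(1)$ and $M_K^+\ge 1+M_K^*(1)$. The whole problem then reduces to showing $M_K^*(1)>0$, and $M_K^*(1)$ is an explicit quantity built out of the residue of $1/(s\zeta_K(s))$ at $s=0$ (controlled by $L(1,\chi)$-type data and $\log D_K$) plus an exponentially small tail from the negative integers. No information whatsoever about $|\zeta_K'(\rho)|$ at nontrivial zeros is needed beyond the boundedness that is already forced by assuming $M_K^\pm$ finite.

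By contrast, your variance approach requires GRH, linear independence of ordinates, and---most seriously---uniform \emph{upper} bounds on $|\zeta_K'(\rho)|$ at low-lying zeros in order to force $\operatorname{Var}(\nu_K)\to\infty$. That last step is precisely the kind of input one does not have unconditionally (and which the Gonek--Hejhal conjecture addresses only in the reverse direction). Even granting everything, your output would be conditional, whereas the paper's Theorem~1.2 is unconditional. The idea you are missing is that the ``constant term'' coming from the trivial zeros, evaluated at a single concrete $x$, already suffices via Jurkat's almost-periodicity principle; one never needs to analyze the oscillatory sum over nontrivial zeros quantitatively.
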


Although this is in some ways a na\"{i}ve generalization of the original Mertens conjecture, we have chosen $1$ as the critical constant following tradition. Yet, it can be seen that this choice will be of strategic importance for our method.

A corresponding conclusion to that of Ingham's theorem can be shown to hold for the function $M_K(x)$ (see~\cref{conj:li},~\cref{thm:fe}, and~\cref{lem:large-horiz-strip}), which leads to the prediction that the \mertensconjecture over $K$ is indeed false for all $K$. Towards partial disproofs of~the conjecture, the quadratic fields $K$ are the most straightforward to consider and we give certain unconditional results.

\begin{thm}\label{thm:mertens-quadfields}
We have the following statements:
\begin{enumerate}[label=(\alph*), font=\normalfont]
    \item Let $K$ be an imaginary quadratic extension of $\mathbb{Q}$, with the exception of $K = \mathbb{Q}(\sqrt{-3})$. Then the \mertensconjecture over~$K$ is false. 
    \item Let $K$ be a real quadratic extension of $\mathbb{Q}$, with the exception of $K = \mathbb{Q}(\sqrt{5})$. Then the \mertensconjecture over $K$ is false. 
\end{enumerate}
\end{thm}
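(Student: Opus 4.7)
The plan is to build on the Ingham-type analytic framework hinted at in the introduction (\cref{conj:li}, \cref{thm:fe}, and \cref{lem:large-horiz-strip}) to produce an explicit formula for $M_K(x)/\sqrt{x}$ in terms of the nontrivial zeros of $\zeta_K(s)$, and then use this to show that $\max(M_K^+, -M_K^-) > 1$ for every quadratic field $K$ outside a small finite exceptional set. The two exceptions $\mathbb{Q}(\sqrt{-3})$ and $\mathbb{Q}(\sqrt{5})$ are the fields of minimal $|\disc_K|$ in their respective signature classes, and are exactly where the following argument fails to yield a bound exceeding~$1$.

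For quadratic $K$ the factorization $\zeta_K(s) = \zeta(s)\, L(s, \chi_K)$, with $\chi_K$ the Kronecker symbol attached to $K$, means the nontrivial zeros of $\zeta_K$ are the union of the nontrivial zeros of $\zeta(s)$ and of $L(s, \chi_K)$. A smoothed Perron inversion of $1/\zeta_K(s)$, justified by the functional equation and horizontal-strip estimates of \cref{thm:fe} and \cref{lem:large-horiz-strip}, yields an approximation
\[
\frac{M_K(x)}{\sqrt{x}} \;\approx\; 2\sum_{\substack{\rho\,=\,1/2 + i\gamma \\ \gamma\,>\,0}} |c_\rho|\, \cos\!\bigl(\gamma \log x + \theta_\rho\bigr), \qquad c_\rho := \frac{1}{\rho\, \zeta_K'(\rho)},\ \ \theta_\rho := \arg c_\rho,
\]
summed over critical zeros of $\zeta_K$. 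At a zero $\rho$ of $\zeta$ one computes $\zeta_K'(\rho) = \zeta'(\rho)\, L(\rho, \chi_K)$, so $|c_\rho|$ is governed by $|L(\rho, \chi_K)|^{-1}$; analogously at zeros of $L(s, \chi_K)$, the amplitude is governed by $|\zeta(\rho) L'(\rho,\chi_K)|^{-1}$.

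With this formula in hand, I would apply Kronecker's simultaneous approximation theorem to a $\mathbb{Q}$-linearly independent subset of $\{\gamma_1,\ldots,\gamma_N\}$ to produce $x_n \to \infty$ along which each cosine above is close to $+1$ (respectively $-1$), yielding
\[
\max(M_K^+,\, -M_K^-) \;\geq\; 2\sum_{j=1}^N |c_{\rho_j}| \;-\; \varepsilon.
\]
Using the lowest few zeros of $\zeta(s)$ (whose $\mathbb{Q}$-linear independence is known numerically) together with the lowest zeros of $L(s, \chi_K)$, the sum $\sum|c_{\rho_j}|$ should exceed $1/2$ as $|\disc_K|$ grows, since the count of low-lying zeros of $L(s, \chi_K)$ scales like $\log |\disc_K|$ by the Riemann--von Mangoldt formula for $L(s,\chi_K)$, each contributing positively. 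The remaining finitely many small-$|\disc_K|$ fields are then handled by direct numerical verification of zero locations and amplitudes, leaving only $\mathbb{Q}(\sqrt{-3})$ and $\mathbb{Q}(\sqrt{5})$ as exceptions.

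The main obstacle is making the finite-zero lower bound uniform in $K$: this requires simultaneous upper bounds on $|L(\rho, \chi_K)|$ at low zeros of $\zeta$, together with comparable control on $|\zeta(\rho)\, L'(\rho, \chi_K)|$ at low zeros of $L(s, \chi_K)$, all uniform in $|\disc_K|$. A secondary obstacle is that $\mathbb{Q}$-linear independence of the chosen ordinates of $L(s,\chi_K)$ across families $K$ cannot be verified in closed form; where it fails one would fall back on Tur\'{a}n-type oscillation estimates, at the cost of weaker but still usable constants. The parity distinction in parts (a) and (b) enters only through the shape of the gamma factors in $\chi_K$'s functional equation and should not materially alter the argument.
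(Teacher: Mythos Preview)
Your proposal follows the classical Ingham route---align finitely many oscillatory terms via Kronecker---but this is \emph{not} how the paper proceeds, and your approach has a genuine gap that the paper's method is designed precisely to avoid.

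The central obstruction is that Kronecker's theorem requires $\mathbb{Q}$-linear independence of the ordinates $\gamma_j$ you select, and this is not known unconditionally for any infinite family. You acknowledge this but underestimate its severity. Using only the first few zeros of $\zeta(s)$ (whose independence \emph{is} verified numerically) gives amplitudes $|c_\rho| = |\rho\,\zeta'(\rho)\,L(\rho,\chi_K)|^{-1}$ that \emph{shrink} with $|\disc_K|$, since the convexity bound gives $|L(\tfrac12+i\gamma,\chi_K)| \ll |\disc_K|^{1/4}$; so a fixed finite set of $\zeta$-zeros cannot produce a uniform lower bound exceeding $1$. To compensate you propose to include low zeros of $L(s,\chi_K)$, but for these the linear independence is entirely open and varies with $K$, so neither Kronecker nor a once-and-for-all numerical check applies. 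The Tur\'an fallback you mention does not rescue this: Tur\'an-type power-sum inequalities still yield constants depending on how close the ordinates are to rational dependence, which you cannot control uniformly across $K$.

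The paper's argument is structurally different. It first reduces unconditionally to the case where $\zeta_K$ satisfies RH with only simple zeros (\cref{thm:mertens-very-fail}: any failure of these already forces $|M_K(x)|/\sqrt x\to\infty$). It then invokes Jurkat's theory of functions almost periodic in the distributional sense: writing $M_K^*(x) = -\sum_{k\ge 0}\operatorname{Res}_{s=-k}\frac{x^s}{s\zeta_K(s)}$ for the contribution of the \emph{trivial} zeros, the explicit formula makes $x^{-1/2}(M_K(x)+M_K^*(x))$ an APD function, so \emph{every} value it takes is a limit point of $x^{-1/2}M_K(x)$ (\cref{thm:m-star-limit-val}, \cref{lem:when-failure}). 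Taking $x\to 1^+$ gives the limit point $1+M_K^*(1)$, so it suffices to show $M_K^*(1)>0$. This quantity is explicitly computable from the functional equation: for imaginary quadratic $K$ its leading term is $2\pi/(L(1,\chi_{-D})\sqrt D)$, and an elementary P\'olya--Vinogradov bound on $L(1,\chi_{-D})$ shows $M_K^*(1)>0$ once $D>307$; the real case is similar but also needs a lower bound on $L'/L(1,\chi_D)$. No zeros on the critical line, no amplitudes $|c_\rho|$, and no linear-independence hypothesis enter. The residual small discriminants are handled by direct computation of $M_K^*(1)$, by evaluating $(M_K(n)+M_K^*(n))/\sqrt n$ at small integers $n$, and in two stubborn cases ($\mathbb{Q}(i)$, $\mathbb{Q}(\sqrt 2)$, $\mathbb{Q}(\sqrt 3)$) by the Jurkat--Peyerimhoff smoothed sums $h^*_{K,T}(t)$, which require only finitely many numerically computed zeros of a single $\zeta_K$ at a time.

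In short: the paper trades your oscillatory sum over nontrivial zeros for a single explicitly computable constant $M_K^*(1)$ built from residues at trivial zeros and $s=0$, and this is what makes the result unconditional.
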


\begin{rmk}\label{rmk:q-sqrt-neg-3}
While the basic statement $|M_K(x)|~\leq x^{1/2}$ for all $x \geq 1$ is false for $K = \mathbb{Q}(\sqrt{-3})$ and $K = \mathbb{Q}(\sqrt{5})$ (an easy computation demonstrates that $M_{\mathbb{Q}(\sqrt{-3})}(7) = -3$ and $M_{\mathbb{Q}(\sqrt{5})}(11) = -4$), the methods of this paper are not immediately capable of demonstrating that the \mertensconjecture fails for these fields (as this requires knowledge of limiting behavior), unlike the other cases of the previous theorem. It is, however, likely that computations analogous to those~used~in \cite{OdlyzkoteRiele} to falsify the original Mertens conjecture would suffice for both of these quadratic fields as well. 
\end{rmk}

The techniques of the proof of Theorem~\ref{thm:mertens-quadfields} allow us to arrive at a similar result for general number fields of degree $n_K > 2$. In what follows, the \textit{signature} $(r_1,r_2)$ of a number field $K$ is the ordered pair of non-negative integers that encodes the number $r_1$ of real embeddings and $r_2$ of complex conjugate pairs of embeddings of $K$.

\begin{thm}\label{thm:mertens-genfields}
Fix a signature $(r_1,r_2)$. There exists some $\mathcal{D} = \mathcal{D}_{r_1,2r_2} > 0$ depending only on $(r_1,r_2)$ for which the \mertensconjecture is false~for every extension $K$ of $\mathbb{Q}$ of signature $(r_1,r_2)$ with absolute discriminant $\adisc_K > \mathcal{D}$. 
\end{thm}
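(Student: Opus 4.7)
The plan is to extend the explicit-formula strategy behind \cref{thm:mertens-quadfields} with quantitative dependence on $\adisc_K$. Starting from~\eqref{eq:M_K(x)} and shifting the contour across the critical strip, one derives a truncated explicit formula
\begin{equation*}
\frac{M_K(x)}{x^{1/2}} \;=\; \sum_{\substack{\zeta_K(\rho)=0\\ |\Imag \rho|\leq T}} \frac{x^{\rho-1/2}}{\rho\,\zeta_K'(\rho)} \;+\; E_K(x,T),
\end{equation*}
with $E_K(x,T)$ uniformly controlled in $K$ by convexity estimates for $\zeta_K(s)^{-1}$ and the trivial-zero contributions. Without loss of generality we may assume the Riemann hypothesis for $\zeta_K$: if any nontrivial zero has $\Real \rho > 1/2$, the corresponding $x^{\Real \rho - 1/2}$ term already forces unbounded growth of $M_K(x)/x^{1/2}$, trivially disproving the \mertensconjecture.

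Under this hypothesis the truncated sum is a trigonometric polynomial $\sum_\rho c_\rho\, x^{i\Imag \rho}$ with $c_\rho = 1/(\rho\,\zeta_K'(\rho))$. Applying Kronecker's simultaneous Diophantine approximation theorem (or a joint-density argument \`a la Humphries, should the ordinates of $\zeta_K$ fail $\QQ$-linear independence) to choose $\log x$ aligns the phases, so the real part of the sum can be made arbitrarily close to $\pm S_K(T)$, where
\begin{equation*}
S_K(T) \;:=\; \sum_{\substack{\zeta_K(\rho)=0\\ 0<\Imag \rho\leq T}} \frac{2}{|\rho\,\zeta_K'(\rho)|}.
\end{equation*}
It thus suffices to show that for some fixed $T=T(r_1,r_2)$, one has $S_K(T) > 1 + \sup_x|E_K(x,T)|$ whenever $\adisc_K$ exceeds a threshold $\mathcal{D}_{r_1,2r_2}$.

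The principal obstacle is securing $S_K(T) \to \infty$ as $\adisc_K \to \infty$ with signature held fixed. The zero-counting formula
\begin{equation*}
N_K(T) \;=\; \frac{T}{\pi}\log\!\Big(\adisc_K\big(T/(2\pi e)\big)^{n_K}\Big) + O_{n_K}\!\big(\log(\adisc_K T + 2)\big)
\end{equation*}
guarantees $N_K(T) \asymp_{r_1,r_2,T} \log \adisc_K$, providing a proliferation of low-lying zeros as $\adisc_K$ grows. By Cauchy--Schwarz,
\begin{equation*}
S_K(T) \;\geq\; \frac{N_K(T)^2}{\displaystyle\sum_{|\Imag \rho|\leq T}|\rho\,\zeta_K'(\rho)|},
\end{equation*}
so the task reduces to bounding $\sum |\rho\,\zeta_K'(\rho)|$ by a power of $\log \adisc_K$ strictly less than $2$. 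This should follow from the Hadamard product for $\zeta_K$ together with the Weil explicit formula applied to a test function localized near the critical line, producing $|\zeta_K'(\rho)| \ll_{r_1,r_2,T} (\log \adisc_K)^{A}$ for some explicit $A<2$ uniformly for $|\Imag\rho|\leq T$; indeed the heuristic $A=1$ is consistent with the mean spacing $\sim 1/\log \adisc_K$ between consecutive low-lying zeros. The technical crux is making this uniform $K$-dependence precise: one must rule out pathological clusters where $\zeta_K'(\rho)$ is abnormally large, which I would attempt via a second-moment analysis on short vertical intervals. Once $S_K(T) \to \infty$ is established, choosing $\mathcal{D}_{r_1,2r_2}$ so that $S_K(T) > 1 + \sup_x |E_K(x,T)|$ on $\{\adisc_K > \mathcal{D}_{r_1,2r_2}\}$ completes the disproof.
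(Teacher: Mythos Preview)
Your approach differs fundamentally from the paper's, and contains genuine gaps that I do not see how to close.

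The paper never attempts to show that $S_K(T) = \sum_\rho 2/|\rho \zeta_K'(\rho)|$ grows with $\adisc_K$; indeed, it avoids $\zeta_K'(\rho)$ entirely. Instead it invokes Jurkat's almost-periodicity machinery (\cref{thm:apd-functions}, \cref{thm:m-star-limit-val}): every value of $x^{-1/2}(M_K(x) + M_K^*(x))$ is a limit point of $x^{-1/2}M_K(x)$, where $M_K^*(x) = -\sum_{k \geq 0} \res_{s=-k} x^s/(s\zeta_K(s))$ collects the trivial-zero contributions. Letting $x \to 1^\pm$ in \cref{lem:when-failure} gives $M_K^+ \geq 1 + M_K^*(1)$, so it suffices to prove $M_K^*(1) > 0$ for large $\adisc_K$. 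The dominant piece of $M_K^*(1)$ is $-\res_{s=0} 1/(s\zeta_K(s))$, which via the functional equation and Louboutin's bound on $\res_{s=1}\zeta_K(s)$ satisfies $\gg_{r_1,r_2} \adisc_K^{-1/2}(\log \adisc_K)^{-r_2}$ (\cref{lem:residue-at-zero}); the residues at negative integers are $O(\adisc_K^{-3/2}(\log \adisc_K)^{n_K})$ (\cref{lem:residues-negative-ints-bounds}). Failure of RH or of simplicity is handled separately and unconditionally by \cref{thm:mertens-very-fail}, so no information about $\zeta_K'(\rho)$ is ever required.

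Your proposal, by contrast, rests on two steps that are not justified. First, aligning all phases via Kronecker requires $\QQ$-linear independence of the ordinates, which is \cref{conj:li} and is wide open; the allusion to a ``joint-density argument'' is not a substitute, since without independence the supremum of the trigonometric polynomial can be strictly smaller than $S_K(T)$. (You also implicitly assume simple zeros in writing the explicit formula; this too must be reduced to a degenerate case.) Second, and more seriously, the bound $|\zeta_K'(\rho)| \ll (\log \adisc_K)^A$ is neither proved nor plausible at the strength you need. Your Cauchy--Schwarz step requires $\sum_{|\gamma|\leq T} |\rho\zeta_K'(\rho)| \ll (\log \adisc_K)^B$ with $B < 2$; since there are $\asymp \log \adisc_K$ terms, this forces a pointwise exponent $A < 1$, not $A < 2$ as you write. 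The heuristic $A = 1$ you cite would yield only $S_K(T) \gg 1$, which is insufficient, and I know of no unconditional (or even GRH-conditional) route to $A < 1$ uniformly in $K$. The paper's method succeeds precisely because it trades this hard oscillatory analysis for a soft evaluation at the single point $x = 1$.
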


We establish these results in \cref{sec:disproof}.

\subsection{Logarithmic limiting distributions}
In order to study the Mertens function over a number field, we now define another object which serves as a chief interest of our work.

\begin{defn}\label{def:limdist}
We say that a function $\vec{\phi}: [0,\infty) \rightarrow \mathbb{R}^{\ell}$ possesses a \textit{limiting distribution} $\nu$ on $\mathbb{R}^{\ell}$ if $\nu$ is a probability measure on $\mathbb{R}^{\ell}$ and
\begin{equation*}
    \lim_{Y \rightarrow \infty} \frac{1}{Y} \int_{0}^Y f(\vec{\phi}(y)) \, dy = \int_{\mathbb{R}^{\ell}} f(x) \, d\nu(x)
\end{equation*}
for all bounded continuous real-valued functions $f$ on $\mathbb{R}^{\ell}$.
\end{defn}

Recent years have seen great refinements in the probabilistic methods used to study number-theoretic functions. The influential work of Rubinstein and Sarnak \cite{RS} in establishing the existence of limiting distributions pertaining to various questions on R\'{e}nyi--Shanks prime number races, with its extensive generalizations in the work of Ng~\cite{Ng}, Humphries~\cite{Humphries13},~and Akbary--Ng--Shahabi~\cite{ANS}, has seen wide applications to the summatory functions in number theory, including those of the Liouville function and the M\"{o}bius function. Pertaining to~the classical Mertens function $M(x)$, Ng~\cite{Ng} established that the function
\begin{equation*}
    \phi(y) \coloneqq e^{-y/2}M(e^y)
\end{equation*}
possesses a limiting distribution $\nu$ on $\mathbb{R}$, assuming the Riemann hypothesis for and the following conjecture of Gonek and Hejhal on the discrete moments of $\zeta'(s)$ at the zeros:
\begin{equation}\label{eq:gonek-hejhal}
    J_{-1}(T) \ll T, 
\end{equation}
where, with the convention $\rho = \frac{1}{2} + i\gamma$,
\begin{equation*}
    J_k(T) \coloneqq \sum_{\substack{0 < \gamma \leq T \\ \zeta(\frac{1}{2} + i\gamma) = 0}} |\zeta'(\rho)|^{2k},
\end{equation*}

We prove the following generalization of Ng's result.

\begin{thm}
\label{thm:limiting-distribution-abelian}
Let $K$ be an abelian number field. Assume the Riemann hypothesis for~$\zeta_K(s)$, and the following extension of~\eqref{eq:gonek-hejhal}:
\begin{equation}\label{eq:gonek-hejhal-nfield}
    J_{-1}^K(T) \coloneqq \sum_{\substack{0 \leq \gamma \leq T \\ \zeta_K(\frac{1}{2} + i\gamma) = 0}} \frac{1}{|\zeta_K'(\rho)|^2} \ll_{\alpha} T^{1 + \alpha}.
\end{equation}
for some $0 \leq \alpha < 2 - \sqrt{3}$. Then the function
\begin{equation*}
    \phi_K(y) \coloneqq e^{-y/2}M_K(e^y)
\end{equation*}
possesses a limiting distribution $\nu_K$ on $\mathbb{R}$:
\begin{equation*}
    \lim_{Y \rightarrow \infty} \frac{1}{Y} \int_0^Y f(\phi_K(y)) \, dy = \int_{-\infty}^\infty f(x) \, d\nu_K(x)
\end{equation*}
for all bounded continuous real-valued functions $f$ on $\mathbb{R}$.
\end{thm}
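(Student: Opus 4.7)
My plan follows the template of Ng's argument \cite{Ng} for the classical Mertens function, with suitable modifications for the Dedekind zeta setting. The scheme is: derive an explicit formula for $M_K$, approximate $\phi_K$ in mean square by truncated exponential sums over zeros, establish a limiting distribution for each truncation via Kronecker--Weyl equidistribution on a torus, and pass to the weak$^*$ limit.

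First I would start from the Mellin identity \eqref{eq:M_K(x)} and apply a truncated Perron formula, shifting the contour past the critical line. Under RH for $\zeta_K(s)$ and the simplicity of zeros (which follows from the finiteness of \eqref{eq:gonek-hejhal-nfield}), this yields
\begin{equation*}
    M_K(x) = \sum_{\rho} \frac{x^{\rho}}{\rho\,\zeta_K'(\rho)} + E_K(x,T),
\end{equation*}
where $\rho = \tfrac12 + i\gamma$ runs over the nontrivial zeros with $|\gamma| \leq T$, and $E_K(x,T)$ collects the residue at $s=0$, the trivial-zero contribution, and the Perron truncation error. Setting $x = e^y$ and dividing by $e^{y/2}$ gives the representation $\phi_K(y) = \sum_{|\gamma|\le T} e^{i\gamma y}/(\rho\,\zeta_K'(\rho)) + e^{-y/2} E_K(e^y,T)$. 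The truncations
\begin{equation*}
    \phi_K^X(y) \coloneqq \sum_{|\gamma| \leq X} \frac{e^{i\gamma y}}{\rho\,\zeta_K'(\rho)}
\end{equation*}
are Bohr almost-periodic, and Kronecker--Weyl equidistribution on the closure of $\{(\gamma y \bmod 2\pi)\}_{|\gamma|\le X}$ in a torus supplies each $\phi_K^X$ with a limiting distribution $\nu_K^X$.

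The central step is to show that $\phi_K^X \to \phi_K$ in the Besicovitch seminorm, i.e.\
\begin{equation*}
    \limsup_{Y\to\infty} \frac{1}{Y} \int_0^Y \bigl|\phi_K(y) - \phi_K^X(y)\bigr|^2\,dy \;\longrightarrow\; 0 \quad \text{as } X \to \infty.
\end{equation*}
Expanding the square and applying a mean-value theorem of Montgomery--Vaughan type for Dirichlet polynomials over zeros reduces this to bounding the tail $\sum_{|\gamma|>X} 1/|\rho\,\zeta_K'(\rho)|^2$. Partial summation against \eqref{eq:gonek-hejhal-nfield} gives a tail of size $\ll_\alpha X^{\alpha-1}$, which is $o(1)$ as long as $\alpha < 1$; the stronger constraint $\alpha < 2-\sqrt 3$ should emerge after optimizing the Perron truncation height $T$ against the contribution of $E_K(e^y,T)$ to the mean square, exactly as in Ng's balancing argument. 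The abelian hypothesis enters through the factorization $\zeta_K(s) = \prod_\chi L(s,\chi)$ over characters of $\gal(K/\mathbb Q)$, which lets us import zero-density and horizontal growth estimates from the Dirichlet $L$-function literature.

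Once the $L^2$-approximation is established, I would deduce existence of $\nu_K$ by a standard soft argument: the measures $\nu_K^X$ are tight (their second moments are bounded uniformly in $X$), any weak$^*$ subsequential limit $\nu$ satisfies $\int f\,d\nu = \lim_Y \tfrac1Y \int_0^Y f(\phi_K)\,dy$ for all bounded continuous $f$ via Chebyshev together with the mean-square approximation, and uniqueness of $\nu$ forces convergence of the whole sequence. This last step is now well-codified in \cite{ANS} and \cite{Humphries13}. The main obstacle, and the only place where the hypothesis \eqref{eq:gonek-hejhal-nfield} is genuinely needed, is the mean-square approximation by $\phi_K^X$ and the precise $\alpha < 2-\sqrt 3$ threshold that comes out of balancing the Perron error against the Gonek--Hejhal tail.
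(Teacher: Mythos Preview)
Your outline is essentially the same strategy the paper uses, and it is correct in structure. The paper, however, does not redo Ng's argument from scratch: it invokes the black-box criterion of Akbary--Ng--Shahabi \cite[Corollary~1.3]{ANS} (their \cref{thm:ANS}), which packages the Kronecker--Weyl and Besicovitch-approximation steps you describe. The paper's work then reduces to (i) extending the explicit formula from the special heights $T_n\in\mathcal T$ to all $T\ge 2$ using $J_{-1}^K(T)\ll T^{1+\alpha}$ (their \cref{lem:explicit-formula-gh}), and (ii) verifying the three ANS hypotheses: the mean-square error bound with truncation $X=e^Y$, the zero-count $N_K(T+1)-N_K(T)\ll\log T$, and the weighted-coefficient bound $\sum_{|\gamma|\le T}\gamma^2|\rho\zeta_K'(\rho)|^{-2}\ll T^{1+\alpha}$.

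Two points of your proposal deserve correction. First, you misplace the origin of the threshold $\alpha<2-\sqrt3$. It does \emph{not} come from balancing the Perron error against the Gonek--Hejhal tail; the diagonal tail $\sum_{|\gamma|>X}|\rho\zeta_K'(\rho)|^{-2}$ and the Perron error with $T=e^Y$ are both controlled for any $\alpha<1$. The sharper constraint enters through the ANS hypothesis $\sum_{\lambda_n\le T}\lambda_n^2|r_n|^2\ll T^\theta$ with $\theta<3-\sqrt3$, which in your direct approach corresponds to the off-diagonal error in the Montgomery--Vaughan mean-value estimate for $\phi_K^{e^Y}-\phi_K^X$, not to the diagonal. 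Second, the abelian hypothesis on $K$ is not used anywhere in the proof; no factorization into Dirichlet $L$-functions or imported zero-density input is needed. The explicit formula, the horizontal bounds (\cref{lem:large-horiz-strip}), and the zero-counting (\cref{cor:zero-count}) all hold for arbitrary $K$. The word ``abelian'' in the statement reflects only that the hypotheses RH and \eqref{eq:gonek-hejhal-nfield} (in particular simple zeros) are plausible precisely in that case.
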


This is proven in \cref{sec:limdist}.

To be well-defined, the bound~\eqref{eq:gonek-hejhal-nfield} implies that all the nontrivial zeros of $\zeta_K(s)$ are simple. This dictates, in consequence, the non-vanishing at $s = \frac{1}{2}$ of $\zeta_K(s)$, the sign of whose functional equation always prescribes at $s = \frac{1}{2}$ a zero of even multiplicity. 

The validity of these suppositions is most transparent when $K/\mathbb{Q}$ is a Galois extension, due to Artin factorization for the Dedekind zeta functions of such number fields, and is markedly different between abelian and non-abelian Galois extensions. Conjecturally, it emerges in~the abelian case from the conventional hypotheses that no two Dirichlet $L$-functions share a nontrivial zero and that no Dirichlet $L$-function vanishes at the central point $s = \frac{1}{2}$. Both of these assertions are weaker consequences of the Grand Simplicity Hypothesis for the Dirichlet $L$-functions (cf.~\cite{RS}). The latter conjecture is remarkably well-studied, along with the work of Balasubramanian--Murty~\cite{BM} and Iwaniec--Sarnak~\cite{IS} showing that $L(\frac{1}{2},\chi_q) \neq 0$ for a positive proportion of the primitive characters $\chi_q$ of any sufficiently large modulus $q$.

On the other hand, for non-abelian number fields, the simplicity hypothesis is known to be false unconditionally, with the zeta function of any such number field having infinitely many nontrivial zeros of multiplicity at least $2$ (cf.~\cite{hkms}). By example, the non-vanishing at $s = \frac{1}{2}$ has also been disproven, with the earliest examples of number fields with $\zeta_K(\frac{1}{2}) = 0$ known to Armitage and Serre (see~\cref{sec:one-sided-growth} for further discussion). This renders an obstruction to any extension of Theorem~\ref{thm:limiting-distribution-abelian} to the non-abelian case.

The conjecture \eqref{eq:gonek-hejhal} is the special case $k = -1$ of a conjecture $J_{k}(T) \asymp T(\log T)^{(k+1)^2}$~that was first proposed independently by Gonek~\cite{Gonek1} and Hejhal~\cite{Hejhal} for all $k \in \RR$ in view of their examinations of the discrete moments $J_{k}(T)$. Gonek also predicted the asymptotic formula $J_{-1}(T) \sim \frac{3}{\pi^3}T$ and proved the bound $J_{-1}(T) \gg T$. It was subsequently~proposed~by Keating--Snaith~\cite{KS} that random matrix theoretic heuristics could be used in connection with conjectures about moments of the zeta function. In particular, they modeled the value distribution of $\zeta(s)$ near the critical line by the characteristic polynomial of a large unitary random matrix. Expanding upon this work, Hughes--Keating--O'Connell~\cite{HKO} conjectured asymptotic formulae for all $J_{k}(T)$, $k > -\frac{3}{2}$, giving explicit numerical constants depending on $k$. The random matrix approach has proven to produce reliable conjectures; for instance, the conjecture of Hughes--Keating--O'Connell in the case $k = -1$ agrees with that of Gonek. It is expected that similar heuristics would yield accurate conjectures for positive moments of Dedekind zeta functions as well (see~\cite{GHK,BGM,Heap}). They would also be useful in support of \eqref{eq:gonek-hejhal-nfield}. 

We argue in support of the conjecture in the equation \eqref{eq:gonek-hejhal-nfield} in \cref{sec:limdist}. For now, we have not appealed to random matrix theory, instead relying on prior work and a series of generous but not unreasonable assumptions, by analogy with the case of the Riemann zeta function.

As a direct consequence of the arguments to be presented in \cref{sec:limdist}, it behooves us to mention that Corollary~1.15 of Akbary--Ng--Shahabi~\cite{ANS} yields the following analogue of the weak Mertens conjecture as stated in  \cite[Theorem~1.3]{Ng}.

\begin{thm}\label{thm:weakmertens}
With the same assumptions as in Theorem \ref{thm:limiting-distribution-abelian}, we have
\begin{equation*}
    \int_0^Y \left( \frac{M_K(e^y)}{e^{y/2}}\right)^2 \, dy \sim \beta Y,
\end{equation*}
where $\beta = 2\sum_{\gamma > 0} |\rho \zeta_K'(\rho)|^{-2}$. The assumption $J_{-1}^K(T) \ll_{\alpha} T^{1+\alpha}$ implies that the series defining $\beta$ is convergent.
\end{thm}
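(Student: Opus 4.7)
The plan is to reduce the statement to a direct application of~\cite[Corollary~1.15]{ANS}, which furnishes a second-moment asymptotic for functions admitting an appropriate almost-periodic representation. The argument closely parallels the proof of~\cref{thm:limiting-distribution-abelian}.

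First, under RH for $\zeta_K(s)$ and simplicity of the nontrivial zeros (which is implicit in the very definition of $J_{-1}^K(T)$), one has an explicit formula of the form
\begin{equation*}
    \phi_K(y) = e^{-y/2} M_K(e^y) = \sum_{|\gamma| \leq T} \frac{e^{i\gamma y}}{\rho \zeta_K'(\rho)} + R_K(y,T),
\end{equation*}
where $\rho = \tfrac{1}{2} + i\gamma$ runs over the nontrivial zeros of $\zeta_K(s)$ and $R_K(y,T)$ is a remainder controllable in mean square. This is precisely the setup to which the framework of Akbary--Ng--Shahabi applies, with coefficients $c_\gamma = (\rho \zeta_K'(\rho))^{-1}$ attached to the frequencies $\gamma$.

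Second, I would verify that $\beta$ is well-defined. Partial summation with $|\rho|^2 = \tfrac{1}{4} + \gamma^2$ gives
\begin{equation*}
    \sum_{0 < \gamma \leq T} \frac{1}{|\rho|^2 |\zeta_K'(\rho)|^2} = \frac{J_{-1}^K(T)}{T^2 + \tfrac{1}{4}} + \int_0^T \frac{2t \, J_{-1}^K(t)}{(t^2 + \tfrac{1}{4})^2} \, dt.
\end{equation*}
The assumed bound $J_{-1}^K(T) \ll_\alpha T^{1+\alpha}$ with $\alpha < 2 - \sqrt{3} < 1$ renders the boundary term $O(T^{\alpha - 1}) \to 0$ and the integrand $O(t^{\alpha - 2})$, which is integrable at infinity; hence $\beta < \infty$. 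Note that for this convergence alone, the weaker restriction $\alpha < 1$ would have sufficed; the more stringent bound $\alpha < 2-\sqrt{3}$ is dictated by the proof of~\cref{thm:limiting-distribution-abelian}.

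Third, I would check that the remaining hypotheses of~\cite[Corollary~1.15]{ANS} — essentially the square-summability $\sum_\gamma |c_\gamma|^2 = \beta/2 < \infty$ established above, together with the mean-square smallness of $R_K(y,T)$ — are in place. The latter is already furnished by the ingredients assembled in~\cref{sec:limdist} for the proof of~\cref{thm:limiting-distribution-abelian}. Once this verification is complete, the Parseval-type conclusion of~\cite[Corollary~1.15]{ANS} yields $\int_0^Y \phi_K(y)^2 \, dy \sim \beta Y$ with the stated constant. The main obstacle is precisely this hypothesis-matching step: namely, showing that our explicit formula and its remainder satisfy the uniform mean-square control required by the ANS framework, rather than any new estimate on $\zeta_K$ itself.
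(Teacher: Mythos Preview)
Your proposal is correct and follows essentially the same route as the paper: both reduce the claim to \cite[Corollary~1.15]{ANS} by invoking the explicit-formula setup and error estimates already established in \cref{sec:limdist} for the proof of \cref{thm:limiting-distribution-abelian}. Your partial-summation verification that $\beta<\infty$ is in fact more explicit than what the paper records, but the overall strategy is identical.
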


In the spirit of the work of Rubinstein--Sarnak and Ng, the existence of a limiting distribution for $\phi_K(y)$ yields a number of consequences which are conditional on the following supplemental analogue of the linear independence for $\zeta(s)$.

\begin{conj}[Linear independence conjecture for $\zeta_K(s)$]\label{conj:li}
The multiset of the non-negative imaginary parts of the nontrivial zeros of $\zeta_K(s)$ is linearly independent over the rationals.
\end{conj}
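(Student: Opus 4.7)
Since Conjecture~\ref{conj:li} extends the classical linear independence hypothesis for $\zeta(s)$ --- itself a celebrated open problem --- I cannot propose a genuine proof. What I can sketch is the reduction in the abelian case (the regime in which \cref{thm:limiting-distribution-abelian} applies) and identify the precise point at which a substantively new Diophantine input would be required.

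The first step is Artin factorization: for abelian $K/\mathbb{Q}$,
\[
\zeta_K(s) = \prod_{\chi \in \widehat{G}} L(s,\chi),
\]
where $\widehat{G}$ is the character group cut out by $K$. The multiset of nontrivial zeros of $\zeta_K$ is thereby the union, counted with multiplicity, of the nontrivial zeros of each $L(s,\chi)$. Unravelling Conjecture~\ref{conj:li} through this identity yields an equivalent package: (i) each $L(s,\chi)$ has only simple nontrivial zeros; (ii) distinct $\chi$ share no nontrivial zero; and (iii) the non-negative ordinates $\{\gamma_{\chi}\}$, pooled across all $\chi$, form a $\mathbb{Q}$-linearly independent multiset. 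These are precisely the ingredients of the Grand Simplicity Hypothesis for the associated Dirichlet $L$-functions, and a proof of (i)--(iii) would immediately furnish Conjecture~\ref{conj:li} in the abelian case. The non-abelian Galois case is markedly different: Artin factorization introduces representations of dimension greater than one, forcing multiplicities in the zero multiset of $\zeta_K$ and hence forced rational relations, so the statement as written is already false there and would have to be replaced by GSH for the constituent Artin $L$-functions.

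The main obstacle is (iii). No present technique produces even a single irrationality $\gamma_i/\gamma_j \notin \mathbb{Q}$ among the ordinates of $\zeta(s)$, let alone a joint linear independence statement across a family of $L$-functions. In lieu of a proof, my plan would be to marshal numerical evidence by the LLL-based approach of Bateman et al.~\cite{Bateman} and Best--Trudgian~\cite{BT}: compute the first $N$ nontrivial ordinates of each Dirichlet $L$-function involved to high precision, assemble them into a real vector, and run lattice reduction to rule out all nontrivial integer relations below a chosen height bound. Such a computation does not prove Conjecture~\ref{conj:li}, but it would supply verified evidence within an explicit range, on par with what is currently available for the classical hypothesis and sufficient to render all conditional consequences of the conjecture very plausible.
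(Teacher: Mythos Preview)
The statement is a \emph{conjecture}, not a theorem, and the paper offers no proof; it is stated as an open hypothesis used to derive conditional consequences (notably \cref{thm:limdist}). You correctly recognize this and refrain from claiming a proof. Your reduction in the abelian case---via Artin factorization to simplicity, disjointness of zeros, and joint $\mathbb{Q}$-linear independence of ordinates across the Dirichlet factors---matches the paper's own commentary immediately following the conjecture, which notes that \cref{conj:li} subsumes simplicity and non-vanishing at $s=\tfrac12$ and is viable for normal $K/\mathbb{Q}$ only in the abelian case. Your remark that the non-abelian Galois case forces multiplicities, and hence trivial rational relations among the ordinates, is likewise consistent with the paper's discussion and with \cref{cor:galois-fields}. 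In short: there is nothing to compare against, and your assessment of the status and the obstacles is accurate.
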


We remark that this conjecture encompasses the simplicity of the zeros of $\zeta_K(s)$, as well as the non-vanishing of $\zeta_K(s)$ at $s = \frac{1}{2}$, and is hence a viable conjecture for a normal extension $K/\mathbb{Q}$ only when the extension is abelian. Its principal use in the original work of Rubinstein--Sarnak is to obtain an expression for the Fourier transform of the limiting distribution. By~\cite[Corollary~1.3]{ANS}, the following is a direct consequence of Theorem \ref{thm:limiting-distribution-abelian}.

\begin{thm}\label{thm:limdist}
Let $K$ be an abelian extension of $\mathbb{Q}$. Assume the Riemann hypothesis for~$\zeta_K(s)$, that $\zeta_K(\frac{1}{2}) \neq 0$, and that $J_{-1}^K(T) \ll T^{1+\alpha}$ for some $0 \leq \alpha < 2 - \sqrt{3}$, and let $\nu_K$ be the limiting distribution associated to $\phi_K(y)$ as in \cref{thm:limiting-distribution-abelian}. Assume moreover \cref{conj:li}. Then the Fourier transform
\begin{equation*}
    \widehat{\nu}_K(\xi) = \int_{\mathbb{R}} e^{-i x \xi} \, d\nu_K(x)
\end{equation*}
of $\nu_K$ at $\xi \in \mathbb{R}$ exists and is equal to
\begin{equation*}
    \widehat{\nu}_K(\xi) = \prod_{\substack{|\gamma| > 0 \\ \zeta_K(\rho) = 0}} \tilde J_0 \left( \frac{2 \xi}{|\rho \zeta_K'(\rho)|} \right),
\end{equation*}
where $\tilde J_0(z)$ is the Bessel function
\begin{equation*}
    \tilde J_0(z) = \int_0^1 e^{-iz \cos (2\pi t)} \, dt.
\end{equation*}
\end{thm}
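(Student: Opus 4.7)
The plan is to invoke the Akbary--Ng--Shahabi framework \cite[Cor.~1.3]{ANS} by verifying its hypotheses for the arithmetic function $\phi_K$, and then reading off the Fourier transform explicitly. The starting point is the explicit formula under the Riemann hypothesis for $\zeta_K(s)$: in analogy with the classical case, one writes
\[
\phi_K(y) = e^{-y/2} M_K(e^y) \;\sim\; \sum_{\rho} \frac{e^{i\gamma y}}{\rho\,\zeta_K'(\rho)},
\]
where $\rho = \tfrac{1}{2}+i\gamma$ ranges over the nontrivial zeros. The hypothesis $\zeta_K(\tfrac{1}{2})\neq 0$ removes a would-be $\gamma=0$ term, while the bound $J_{-1}^K(T)\ll_\alpha T^{1+\alpha}$ with $\alpha<2-\sqrt{3}$ (already used in \cref{thm:limiting-distribution-abelian}) ensures both the simplicity of every $\rho$ and the absolute convergence on average necessary to truncate the explicit formula. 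Pairing each zero $\rho$ with $\bar\rho$ rewrites the sum as a real almost periodic series
\[
\phi_K(y) \;\sim\; \sum_{\gamma>0} \frac{2\cos(\gamma y + \theta_\gamma)}{|\rho\,\zeta_K'(\rho)|},
\]
for suitable phases $\theta_\gamma$, and \cref{thm:weakmertens} guarantees the square-summability $\sum_{\gamma>0}|\rho\,\zeta_K'(\rho)|^{-2}<\infty$.

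Next, I would compute $\widehat{\nu}_K(\xi) = \lim_{Y\to\infty} Y^{-1}\int_0^Y e^{-i\xi\phi_K(y)}\,dy$ directly. Substituting the explicit expansion formally yields
\[
e^{-i\xi\phi_K(y)} = \prod_{\gamma>0} \exp\!\left(-\,\frac{2i\xi\cos(\gamma y+\theta_\gamma)}{|\rho\,\zeta_K'(\rho)|}\right).
\]
Now I invoke \cref{conj:li}: since the positive ordinates $\{\gamma\}$ are $\mathbb{Q}$-linearly independent, the Kronecker--Weyl equidistribution theorem implies that, for any finite truncation to $|\gamma|\le T$, the tuple $(\gamma y+\theta_\gamma)_{0<\gamma\le T}$ becomes uniformly distributed modulo $2\pi$ as $y$ varies over $[0,Y]$ and $Y\to\infty$. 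Thus the Cesàro average factorizes over the independent coordinates, and each one-dimensional average reduces to
\[
\int_0^1 \exp\!\left(-\,\frac{2i\xi\cos(2\pi t)}{|\rho\,\zeta_K'(\rho)|}\right) dt \;=\; \tilde J_0\!\left(\frac{2\xi}{|\rho\,\zeta_K'(\rho)|}\right),
\]
which is exactly the claimed factor.

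The final step is passing from truncated to full product. Using $|\tilde J_0(z)-1|\ll z^2$ for small $z$ together with the convergence of $\sum_{\gamma>0}|\rho\,\zeta_K'(\rho)|^{-2}$, the infinite product converges absolutely and uniformly for $\xi$ in compact sets, and bounded convergence justifies exchanging the product with the Cesàro limit. This is the main technical obstacle: we need the tail of the explicit formula (beyond the truncation $|\gamma|\le T$) to contribute negligibly in $L^2$-mean to $\phi_K$ uniformly in $Y$, so that the Fourier transform of the full $\phi_K$ agrees with the limit of the Fourier transforms of its truncations. This is precisely the content of the hypotheses verified for \cref{thm:limiting-distribution-abelian}, which ensure that $\phi_K$ is, in the sense of \cite{ANS}, a $B^2$-almost periodic function with spectrum $\{\gamma\}$ and Fourier coefficients $2/|\rho\,\zeta_K'(\rho)|$. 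Once this is in place, \cite[Cor.~1.3]{ANS} applies verbatim and delivers the formula for $\widehat{\nu}_K(\xi)$.
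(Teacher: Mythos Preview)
Your proposal is correct and follows essentially the same route as the paper: the paper simply states that the result is a direct consequence of \cite[Corollary~1.3]{ANS} once the hypotheses verified in the proof of \cref{thm:limiting-distribution-abelian} are in place, and you do the same, with the added (but inessential) bonus of unpacking the Kronecker--Weyl/Bessel mechanism that underlies that corollary.
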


This result can be used in pursuit of logarithmic density results: for instance, that the set of numbers $x \geq 1$ in the set $P_{\beta} = \{x^{-1/2}|M_K(x)|~\leq \beta\}$ has a logarithmic density, for certain $\beta > 0$ of interest.

\begin{defn}
For $P \subset [0, \infty)$, set 
\begin{equation*}
    \delta(P) = \lim_{X \rightarrow \infty} \frac{1}{\log X} \int_{t \in P \cap [1.X]} \frac{dt}{t}
\end{equation*}
If the limit exists, we say that the \textit{logarithmic density} of $P$ is $\delta(P)$.
\end{defn}

Following the arguments of 
\cite[Corollary~6.3,~Lemma~6.4]{Humphries13}, we deduce that the Fourier transform $\widehat{\nu}_K$ so constructed is symmetric and observes rapid decay as a function of $\xi$. Hence, $\widehat{\nu}_K$ is absolutely continuous with respect to the Lebesgue measure on $\mathbb{R}$. After a logarithmic change of coordinates, this yields the following extension of the conclusion of \cref{thm:limiting-distribution-abelian} to characteristic functions of well-behaved sets.

\begin{cor}
With the same assumptions as in Theorem \ref{thm:limdist}, 
\begin{equation*}
    \lim_{X \rightarrow \infty} \frac{1}{\log X} \int_{x \in B \cap [1,X]} \, \frac{dx}{x} = \nu_K(B)
\end{equation*}
for all Borel sets $B \subset \mathbb{R}$ with boundary of Lebesgue measure zero.
\end{cor}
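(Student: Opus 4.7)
The plan is to upgrade the weak convergence supplied by \cref{thm:limiting-distribution-abelian} from bounded continuous test functions to indicator functions of the Borel sets in the hypothesis, via absolute continuity of $\nu_K$ and a standard Portmanteau argument. The natural reading of the integrand on the left-hand side (which would otherwise not involve $\phi_K$) is to integrate $\mathbf 1_B(\phi_K(\log x))$ against $dx/x$; under the change of variables $y=\log x$, with $Y=\log X$, the left-hand side becomes $\frac{1}{Y}\int_0^Y \mathbf 1_B(\phi_K(y))\,dy$, so the target is to show that these time averages converge to $\nu_K(B)$.

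First I would establish absolute continuity of $\nu_K$ with respect to Lebesgue measure on $\mathbb R$. Starting from the product representation of $\widehat\nu_K$ in \cref{thm:limdist}, one combines the pointwise bound $|\tilde J_0(z)|\le \min(1,C|z|^{-1/2})$ with the hypothesis $J_{-1}^K(T)\ll_\alpha T^{1+\alpha}$ to obtain rapid decay of $\widehat\nu_K(\xi)$ as $|\xi|\to\infty$. This is precisely the computation carried out in \cite[Corollary~6.3,~Lemma~6.4]{Humphries13} and referenced in the paragraph preceding the corollary. Rapid decay places $\widehat\nu_K$ in $L^1(\mathbb R)$, and Fourier inversion then furnishes a continuous Lebesgue density for $\nu_K$. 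In particular, $\nu_K(\partial B)=0$ whenever $\partial B$ has Lebesgue measure zero, so the hypothesis of the corollary places $B$ in the class of continuity sets for $\nu_K$.

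With absolute continuity in hand, I would invoke the Portmanteau theorem. The conclusion of \cref{thm:limiting-distribution-abelian} is equivalent to the statement that the pushforwards of normalized Lebesgue measure on $[0,Y]$ under $\phi_K$ converge weakly to $\nu_K$, and the standard half of Portmanteau then yields $\frac{1}{Y}\int_0^Y \mathbf 1_B(\phi_K(y))\,dy \to \nu_K(B)$ for every Borel set $B$ with $\nu_K(\partial B)=0$. Reversing the change of variables produces the claimed identity. The only substantive step is the rapid-decay verification for $\widehat\nu_K$; after that, the remainder is a routine approximation argument sandwiching $\mathbf 1_B$ between continuous bump functions and using $\nu_K(\partial B)=0$ to squeeze. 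I expect the rapid-decay step to be the main obstacle, since it requires translating the hypothesized bound on $J_{-1}^K(T)$ into a sufficiently strong tail estimate for the Bessel product, and this is the one place where the full force of the assumptions in \cref{thm:limdist} is used.
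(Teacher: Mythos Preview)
Your proposal is correct and follows essentially the same route as the paper: the paragraph preceding the corollary already records that one deduces rapid decay of $\widehat{\nu}_K$ via \cite[Corollary~6.3,~Lemma~6.4]{Humphries13}, hence absolute continuity of $\nu_K$, and then extends \cref{thm:limiting-distribution-abelian} to characteristic functions after the logarithmic change of variables---exactly your Portmanteau argument. Your remark about the intended meaning of the integrand (namely $\mathbf 1_B(\phi_K(\log x))$ rather than a literal integral over $B\cap[1,X]$) is also well taken.
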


Thus, the set $P_{\beta} = \{x \geq 1 \mid |M_K(x)|~\leq \beta \sqrt{x}\}$ has a logarithmic density, under the~assumptions of RH, linear independence, and $J_{-1}^K(T) \ll T^{1+\alpha}$. See \cref{sec:conclusion-conjectures} for further discussion.

\subsection*{Acknowledgements}
We are deeply grateful to Peter Humphries for supervising this project and to Ken Ono for his valuable suggestions. We would also like to thank Winston Heap, David Lowry Duda, and Micah Milinovich for helpful discussions. We are grateful for the generous support of the National Science Foundation (Grants DMS 2002265 and DMS 205118),
National Security Agency (Grant H98230-21-1-0059), the Thomas Jefferson Fund at the University of Virginia, and the Templeton~World Charity Foundation. This research was conducted as part of the 2021 Research Experiences~for Undergraduates at the University of Virginia.

\section{Preliminaries}\label{sec:prelim}

\subsection{Notation and conventions}
Throughout this article, we use the following conventions:

\begin{itemize}
    \item $K$ is a number field.
    \item $n_K = [K : \QQ]$.
    \item $r_1$ and $2r_2$ are the number of real and complex embeddings of $K$, respectively.
    \item $\disc_K$ is the discriminant of $K$.
    \item $\adisc_K = \vert \disc_K \vert$ is the absolute discriminant of $K$.
    \item $\zeta_K(s)$ is the Dedekind zeta function of $K$.
\end{itemize}

Some theorems in this article apply only to real or imaginary quadratic number fields. In these cases, we will specify any additional hypotheses on $K$. Otherwise, it is assumed that $K$ is a general number field.

The \emph{Riemann hypothesis for $\zeta_K(s)$} will denote the conjecture that all nontrivial zeros of $\zeta_K(s)$ lie on the critical line $\Real(s) = \frac{1}{2}$. In light of this, $\rho$ will always denote a nontrivial zero of~$\zeta_K(s)$ with imaginary part $\gamma$.

\subsection{Analytic properties of Dedekind zeta functions}
In this section, we provide some preliminary statements about Dedekind zeta functions that will be used in the proofs of each of our results. 
We start with the functional equation for $\zeta_K(s)$.

\begin{thm}\label{thm:fe} For any number field $K$, the Dedekind zeta function $\zeta_K(s)$ satisfies
\begin{equation*}
     \zeta_K(1-s)= \zeta_K(s) \left(\frac{\adisc_K}{\pi^{n_K}2^{n_K}}\right)^{s-\frac{1}{2}} \left( \frac{\pi}{2} \right)^{\frac{r_1}{2}}\frac{\Gamma(s)^{r_2}}{(\sin \frac{\pi s}{2})^{r_1} \Gamma(1-s)^{r_1+r_2}}.
\end{equation*}
This functional equation extends $\zeta_K$ to a meromorphic function on $\CC$, which is analytic everywhere except for a simple pole at $s=1$. 
\end{thm}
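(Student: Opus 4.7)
The plan is to reduce the stated asymmetric functional equation to the classical symmetric form of Hecke, and then unpack that identity using standard gamma-function manipulations. First I would introduce the completed Dedekind zeta function
\begin{equation*}
    \xi_K(s) \;=\; \adisc_K^{s/2}\, \Gamma_{\mathbb{R}}(s)^{r_1}\, \Gamma_{\mathbb{C}}(s)^{r_2}\, \zeta_K(s),
\end{equation*}
where $\Gamma_{\mathbb{R}}(s) = \pi^{-s/2}\Gamma(s/2)$ and $\Gamma_{\mathbb{C}}(s) = (2\pi)^{-s}\Gamma(s)$, and then invoke Hecke's theorem that $\xi_K(s)$ admits meromorphic continuation to $\mathbb{C}$ with simple poles only at $s=0,1$ and satisfies $\xi_K(s) = \xi_K(1-s)$. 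The proof of Hecke's identity proceeds by decomposing $\zeta_K$ into a sum of partial zeta functions over the ideal classes, expressing each as the Mellin transform of a theta function built from the lattice of integers in $K_{\mathbb{R}} = K \otimes_{\mathbb{Q}} \mathbb{R}$, and applying Poisson summation together with the self-duality of this lattice under the trace pairing (scaled by $\sqrt{\adisc_K}$). Since this is classical, I would cite Neukirch, \emph{Algebraic Number Theory}, Chapter VII, or Lang, \emph{Algebraic Number Theory}, Chapter XIII, rather than reproduce the argument.

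The substantive remaining step, and the one worth writing out, is to convert the symmetric identity $\xi_K(s) = \xi_K(1-s)$ into the asymmetric formula in the statement. Solving for $\zeta_K(1-s)$ gives
\begin{equation*}
    \zeta_K(1-s) \;=\; \zeta_K(s)\, \adisc_K^{s-\frac{1}{2}}\, \pi^{\frac{r_1}{2}(1-2s)}\,(2\pi)^{r_2(1-2s)}\, \frac{\Gamma(s/2)^{r_1}\,\Gamma(s)^{r_2}}{\Gamma((1-s)/2)^{r_1}\,\Gamma(1-s)^{r_2}}.
\end{equation*}
To eliminate $\Gamma(s/2)$ and $\Gamma((1-s)/2)$ from the ratio, I would apply the reflection formula $\Gamma(s/2)\Gamma(1-s/2) = \pi/\sin(\pi s/2)$ together with the Legendre duplication formula in the form $\Gamma((1-s)/2)\,\Gamma(1-s/2) = 2^{s}\sqrt{\pi}\,\Gamma(1-s)$. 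Dividing these yields
\begin{equation*}
    \frac{\Gamma(s/2)}{\Gamma((1-s)/2)} \;=\; \frac{\sqrt{\pi}}{2^{s}\,\sin(\pi s/2)\,\Gamma(1-s)},
\end{equation*}
which, raised to the $r_1$ power and substituted into the previous display, converts the $\Gamma(s/2)^{r_1}/\Gamma((1-s)/2)^{r_1}$ factor into $\pi^{r_1/2}/(2^{r_1 s}\,\sin(\pi s/2)^{r_1}\,\Gamma(1-s)^{r_1})$.

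Finally, I would collect powers of $\pi$ and of $2$. Using $n_K = r_1 + 2r_2$, the exponent of $\pi$ reduces to $(r_1+r_2) - n_K s$, which equals $\frac{r_1}{2} + \frac{n_K}{2} - n_K s$, matching $-n_K(s-\tfrac{1}{2}) + \tfrac{r_1}{2}$; and the exponent of $2$ reduces to $r_2 - n_K s$, matching $-n_K(s-\tfrac{1}{2}) - \tfrac{r_1}{2}$ since $r_2 = \tfrac{n_K}{2} - \tfrac{r_1}{2}$. Regrouping these gives exactly the factor $\bigl(\adisc_K/(\pi^{n_K} 2^{n_K})\bigr)^{s-\frac{1}{2}}(\pi/2)^{r_1/2}$ of the theorem. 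The meromorphic continuation and the location of the unique simple pole at $s=1$ are part of Hecke's theorem. The main obstacle is purely bookkeeping: tracking the powers of $\pi$ and $2$ without sign errors, so I would be careful to organize that computation in a single display. No genuinely hard step is involved beyond invoking Hecke's classical result.
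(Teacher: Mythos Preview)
Your derivation is correct: the reduction to Hecke's symmetric functional equation $\xi_K(s)=\xi_K(1-s)$ followed by the reflection and duplication identities is the standard route, and your bookkeeping on the exponents of $\pi$ and $2$ checks out exactly against the stated form.

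The paper, however, gives no proof of this theorem at all. It is stated in the preliminaries section as a known classical result and used freely thereafter, so there is nothing to compare your argument against. Your write-up is more than the paper itself supplies; if anything, a one-line citation (as you suggest, to Neukirch or Lang) would match the paper's treatment.
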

The Dedekind zeta function has trivial zeros of order $r_2$ at each negative odd integer and of order $r_1+r_2$ at each negative even integer, as well as a trivial zero of order $r_1+r_2-1$ at $s=0$. It also possesses nontrivial zeros, each of which lies in the critical strip $0 < \Real(s) < 1$. By the above functional equation and the reflection principle, these zeros are symmetric about $\Real(s) = \frac{1}{2}$ and the real line.

Next, we give a suitable upper bound, due to Chandrasekharan--Narasimhan, on the Dirichlet series coefficients for $\zeta_K(s)$.

\begin{lem}[Chandrasekharan--Narasimhan~\protect{\cite[Lemma~9]{Chandrasekharan}}]\label{lem:dedekind-coeff-bound}
Let $K$ be any number field, and write in $\Real(s) > 1$,
\begin{equation*}
    \zeta_K(s) = \sum_{n=1}^\infty \frac{a_n}{n^s}.
\end{equation*}
Then $a_n$ is bounded by the coefficient $b_n$ of $n^{-s}$ in $\zeta(s)^{n_K}$ and there exists a constant $C$ depending only on $n_K$ such that
\begin{equation*}
    a_n \leq b_n \ll_{n_K} n^{\frac{C}{\log \log n}}\ \text{ as } n \rightarrow \infty.
\end{equation*}
The same holds for the Dirichlet series coefficients $a_n'$ of $1/\zeta_K(s)$.
\end{lem}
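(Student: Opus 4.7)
The plan is to establish the chain of inequalities $|a_n'| \leq a_n \leq b_n \ll_{n_K} n^{C/\log\log n}$, in which the outer two inequalities are Euler product manipulations while the middle one reduces to the classical Ramanujan-type bound on the generalized divisor function.

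For the first comparison $a_n \leq b_n$, I would work directly with the Euler product $\zeta_K(s) = \prod_\mathfrak{p}(1-N\mathfrak{p}^{-s})^{-1}$. Grouping the primes of $K$ above a fixed rational prime $p$ with residue degrees $f_1,\ldots,f_g$ (so that $\sum_i e_i f_i = n_K$), the local factor at $p$ becomes $\prod_{i=1}^g (1-p^{-f_i s})^{-1}$. The key elementary fact is that the Dirichlet coefficients of $(1-X^f)^{-1} = \sum_k X^{fk}$ are dominated termwise by those of $(1-X)^{-f} = \sum_k \binom{k+f-1}{f-1}X^k$, as the former has coefficients in $\{0,1\}$ while the latter has coefficients that are all $\geq 1$. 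Multiplying these dominations across $i$, together with $\sum_i f_i \leq \sum_i e_i f_i = n_K$, yields that the $p$-local factor of $\zeta_K(s)$ is coefficientwise dominated by $(1-p^{-s})^{-n_K}$. Since all coefficients involved are non-negative, multiplying these local dominations over $p$ recovers $a_n \leq b_n$, where $b_n$ is the $n$-th coefficient of $\zeta(s)^{n_K}$.

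For the bound $b_n \ll_{n_K} n^{C/\log\log n}$, I would recognize $b_n$ as the generalized $n_K$-fold divisor function $d_{n_K}(n) = \prod_{p^\alpha \| n}\binom{\alpha+n_K-1}{n_K-1}$ and invoke the classical estimate, proved as follows. Since $d_{n_K}$ is multiplicative, the inequality $\binom{\alpha+n_K-1}{n_K-1} \leq p^{C\alpha/\log\log n}$ holds whenever $p$ exceeds an appropriate threshold depending only on $n_K$ and $C$, so only the small primes below this threshold require separate treatment. Using that there are $O(\log n/\log\log n)$ primes below any polylogarithmic cutoff and that $\alpha \leq \log n/\log p \leq \log_2 n$ for each, the product over small primes is at most $\exp(O_{n_K}(\log n/\log\log n)) = n^{O_{n_K}(1/\log\log n)}$, which can be absorbed into $n^{C/\log\log n}$ after enlarging $C$.

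Finally, for $1/\zeta_K(s) = \prod_\mathfrak{p}(1-N\mathfrak{p}^{-s})$, the coefficient $a_n'$ is the signed sum of M\"obius values over integral ideals of norm $n$, so $|a_n'| \leq a_n$ by the triangle inequality and the bound on $a_n$ transfers verbatim. The main obstacle is the substantive content of the second step, the uniform estimate $d_k(n) \ll_k n^{C/\log\log n}$; while classical, it is not a one-line computation and demands the careful small-versus-large prime dichotomy sketched above.
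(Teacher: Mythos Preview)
Your argument is correct and follows the same three-part structure as the paper's (commented-out) proof: compare $a_n$ to $b_n$ prime-by-prime via the Euler product, bound $b_n$, and handle $a_n'$ by the triangle inequality. Steps one and three are essentially identical to the paper's.

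The only noteworthy divergence is in the middle step. You bound $d_{n_K}(n)$ directly by rerunning the small-prime/large-prime dichotomy from scratch. The paper instead observes the pointwise inequality
\[
b_{p^\alpha} = \binom{\alpha+n_K-1}{n_K-1} \leq (\alpha+1)^{n_K-1} = d(p^\alpha)^{n_K-1},
\]
hence $b_n \leq d(n)^{n_K-1}$ by multiplicativity, and then simply cites the classical bound $d(n) \ll n^{(\log 2)/\log\log n}$ to obtain $C = (n_K-1)\log 2$ explicitly. This buys a one-line reduction to a textbook result and an explicit constant, whereas your direct argument is more self-contained but repeats work already packaged in the ordinary divisor bound. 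Either way the content is the same.
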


Useful for our purposes will be an estimate for the number of nontrivial zeros of $\zeta_K(s)$ in unit intervals in the critical strip. Define
\begin{equation*}
    N_K(T) \coloneqq \#\{\rho = \beta + i\gamma \mid \zeta_K(\rho) = 0,~ 0 < \beta < 1,~0 \leq |\gamma|~\leq T\}.
\end{equation*}
It is well-known that $N(T+1) - N(T) \ll \log T$ in the case of $K = \mathbb{Q}$. An analogous result holds for any number field $K$.

\begin{lem}\label{cor:zero-count} For any $T\geq 0$, we have that $N_K(T+1)-N_K(T)\ll \log D_K+n_K\log T$.
\end{lem}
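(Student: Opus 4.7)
The plan is to count zeros of $\zeta_K(s)$ in a box of height one by Jensen's formula, applied to the entire function $g(s) \coloneqq (s-1)\zeta_K(s)$ on a disk centered at a point well inside the half-plane of absolute convergence. Fix $s_0 \coloneqq 2 + iT$ and take radii $0 < r < R$ with $r$ just large enough that the disk $|s - s_0| \leq r$ contains the rectangle $\{\sigma + it : 0 \leq \sigma \leq 1,\ T \leq t \leq T+1\}$ (any $r \geq \sqrt{5}$ works) and with $R$ a fixed multiple of $r$ (say $R = 2r$). Every zero of $g$ in the rectangle is a zero in the smaller disk, so if $n(r)$ counts zeros of $g$ in that disk, it suffices to bound $n(r)$ by $\log \adisc_K + n_K\log(T+2)$.

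The Jensen inequality
\[
 n(r) \log(R/r) \;\leq\; \frac{1}{2\pi}\int_0^{2\pi} \log\bigl|g(s_0 + Re^{i\theta})\bigr|\, d\theta \;-\; \log|g(s_0)|
\]
reduces the problem to two estimates. For the lower bound at the center, the Euler product gives $|\zeta_K(s_0)| \geq \zeta(2)^{-n_K}$ and $|s_0 - 1| \geq 1$, so $\log|g(s_0)| \geq -n_K\log\zeta(2)$, which is harmless. For the upper bound on the large circle, I would use the functional equation of \cref{thm:fe} in the form $\zeta_K(s) = \chi_K(s)\,\zeta_K(1-s)$, where $\chi_K$ is the collection of $\Gamma$- and power factors. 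In the region $\Real(s) \geq 3/2$, the Dirichlet series gives $|\zeta_K(s)| \leq \zeta(3/2)^{n_K}$ trivially. In the region $\Real(s) < 3/2$, applying the functional equation turns $\zeta_K(1-s)$ into something in the region of absolute convergence, and Stirling's formula applied to the Gamma factors yields a bound of the shape
\[
 \log|\zeta_K(s)| \;\ll\; \log \adisc_K + n_K \log(|t|+2)
\]
uniformly on the circle $|s - s_0| = R$; combining with $\log|s-1| \ll \log(T+2)$ gives the same bound for $\log|g(s)|$. Plugging both estimates into Jensen's inequality and using that $\log(R/r)$ is an absolute positive constant yields the desired bound on $n(r)$.

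The main obstacle is the uniform upper bound on $\log|\zeta_K(s)|$ with explicit dependence on both $\adisc_K$ and $T$. The key point is that Stirling's formula applied to $\Gamma(s)^{r_2}/\Gamma(1-s)^{r_1+r_2}(\sin\tfrac{\pi s}{2})^{-r_1}$ produces a factor whose logarithm is $O(n_K \log(|t|+2))$, while the power factor $(\adisc_K/(2\pi)^{n_K})^{s - 1/2}$ contributes $O(\log \adisc_K)$. One must argue carefully that these estimates are uniform as $s$ ranges over the fixed-radius circle, handling the transition across the line $\Real(s) = 1/2$ and avoiding the pole at $s=1$; the latter is precisely why we multiply by $(s-1)$ at the outset. (For the bounded range of small $T$ where the pole at $s=1$ is close to $s_0$, the same argument works after noting that $|s-1|$ in the relevant region is bounded above and below by absolute constants times $T+1$, so the $(s-1)$ factor contributes only $O(\log(T+2))$.)
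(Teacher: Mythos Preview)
The paper itself does not give a proof; it simply cites the literature for an asymptotic formula for $N_K(T)$ and deduces the short-interval bound from it. Your Jensen-formula argument is the standard self-contained route and is essentially correct, so there is little to compare on the level of strategy.

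There is, however, one genuine gap in your execution. You split the large circle $|s-s_0|=R$ into the region $\Real(s)\geq 3/2$ (handled by the Dirichlet series) and the region $\Real(s)<3/2$, and you assert that in the latter region the functional equation ``turns $\zeta_K(1-s)$ into something in the region of absolute convergence.'' That is only true once $\Real(s)<0$; on the arc where $0\leq\Real(s)<3/2$ (which the circle certainly crosses) neither the Dirichlet series nor the functional equation alone controls $|\zeta_K(s)|$. Your later remark about ``handling the transition across $\Real(s)=\tfrac12$'' does not close this: reflecting via $\xi_K(s)=\xi_K(1-s)$ only reduces to $\Real(s)\geq\tfrac12$, and you still owe a bound in the strip $\tfrac12\leq\Real(s)\leq 1+\varepsilon$.

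The fix is to invoke the convexity bound for Dedekind zeta functions (Rademacher), which gives
\[
|\zeta_K(\sigma+it)|\ll_{\varepsilon}\bigl(D_K(|t|+3)^{n_K}\bigr)^{(1-\sigma)/2+\varepsilon}
\qquad(0\leq\sigma\leq 1),
\]
and hence $\log|\zeta_K(s)|\ll \log D_K+n_K\log(|t|+2)$ uniformly on the critical-strip arc. This is exactly the estimate you claimed; the point is only that it comes from Phragm\'en--Lindel\"of interpolation between the two edges, not from the functional equation by itself. Alternatively, apply Jensen directly to the completed function $\xi_K(s)=s(s-1)D_K^{s/2}\Gamma_{\mathbb R}(s)^{r_1}\Gamma_{\mathbb C}(s)^{r_2}\zeta_K(s)$: the symmetry $\xi_K(s)=\xi_K(1-s)$ reduces to $\Real(s)\geq\tfrac12$, and then a single application of Phragm\'en--Lindel\"of between $\Real(s)=\tfrac12$ and a line in the half-plane of absolute convergence yields the required upper bound with the correct dependence on $D_K$ and $T$. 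With either patch in place your argument goes through.
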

\begin{proof}
See for instance~\cite{KN} for suitable estimates of the quantity $N_K(T)$.
\end{proof}

In the explicit formulae to follow in \cref{sec:limdist}, we will also require the following result which gives good upper bounds for $1/\zeta_K(s)$ on certain horizontal lines near the critical strip. We note that it is conditional on the Riemann hypothesis for $\zeta_K(s)$,

\begin{lem}\label{lem:large-horiz-strip}
Assume that $\zeta_K(s)$ satisfies the Riemann hypothesis. Then there exists a constant $C > 0$ such that, for each positive integer $n\geq 4$, there exists some $n\leq T_n<n+1$ such that
\begin{equation*}
    \left|\zeta_K(\sigma+iT_n)\right|\geq \exp\left(-\frac{C\log n}{\log\log n}\right)
\end{equation*}
for $-1\leq \sigma\leq 2$. We denote by $\mathcal{T} = \{T_n\}_{n=4}^\infty$ the sequence so constructed.
\end{lem}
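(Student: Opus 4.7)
The plan is to carry out the proof in three stages: first, choose $T_n \in [n, n+1)$ whose vertical distance to every nontrivial ordinate of $\zeta_K(s)$ exceeds a specified inverse-polylogarithmic threshold; second, leverage the Riemann hypothesis together with the partial fraction expansion of $\zeta_K'/\zeta_K$ to bound $|\zeta_K(\sigma + iT_n)|$ from below on $1/2 \leq \sigma \leq 2$; third, transfer the estimate to $-1 \leq \sigma < 1/2$ via the functional equation of \cref{thm:fe}.

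For the first stage, \cref{cor:zero-count} shows that for $n$ sufficiently large the interval $[n, n+1]$ contains at most $N \ll \log D_K + n_K \log n \ll \log n$ ordinates of nontrivial zeros. Partitioning $[n, n+1]$ into $N+1$ equal subintervals and invoking pigeonhole, some subinterval is free of ordinates; I would take $T_n$ to be its midpoint, so that $|T_n - \gamma| \gg 1/\log n$ for every nontrivial zero $\rho = \tfrac{1}{2} + i\gamma$ of $\zeta_K(s)$ with $|T_n - \gamma| \leq 1$.

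For the second stage, apply the Hadamard partial fraction decomposition
\[
\frac{\zeta_K'}{\zeta_K}(s) = \sum_\rho \Bigl( \frac{1}{s-\rho} + \frac{1}{\rho} \Bigr) + \Phi_K(s),
\]
where $\Phi_K(s)$ collects Archimedean, polar, and constant contributions and is bounded by $\ll \log D_K + n_K \log(|t|+2)$ via Stirling applied to the $\Gamma$ factors in \cref{thm:fe}. Under RH, $|s - \rho| \geq |T_n - \gamma|$ whenever $\sigma \geq 1/2$, so the chosen $T_n$ keeps $|s - \rho|$ bounded away from zero. Zeros with $|T_n - \gamma| > 1$ contribute $O(\log n)$ by partial summation against \cref{cor:zero-count}, while the $O(\log n)$ nearby zeros account for the main term. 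Integrating $\zeta_K'/\zeta_K(u + iT_n)$ horizontally from $u = 2$ to $u = \sigma$, and noting that $|\zeta_K(2 + iT_n)|$ is bounded below in terms of $n_K$ alone, yields the stated lower bound on $|\zeta_K(\sigma + iT_n)|$ for $1/2 \leq \sigma \leq 2$. The third stage is routine: on $-1 \leq \sigma < 1/2$ the Archimedean factor in the functional equation is $\exp(O(n_K \log n))$ by Stirling and is comfortably absorbed into $C$ after replacing $s$ by $1-s$ and invoking the second stage.

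The principal obstacle is the sharpness of the bound. A naive accounting produces $(\log n)^2$ rather than $\log n/\log\log n$: up to $O(\log n)$ close zeros each contribute $O(\log n)$ to $\zeta_K'/\zeta_K$ near $T_n$. To recover the stated constant $\log n/\log\log n$, one must argue more delicately, either via a Selberg/Littlewood-type explicit formula for $\log|\zeta_K(s)|$ (derived from Jensen's inequality on a disk of bounded radius around $2 + iT_n$ combined with a refined zero-density estimate of the form $N_K(T+h)-N_K(T-h) \ll 1 + h \log n$ uniformly for $h \geq 1/\log n$) or via a mean-value argument that selects $T_n$ so the total contribution of the zero sum is small on average, in the spirit of Soundararajan's bound $\log|\zeta(\tfrac12 + it)| \ll \log t / \log\log t$ on RH. This refinement is where the main technical weight of the proof will reside.
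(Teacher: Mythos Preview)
Your overall framework---reduce to $\sigma\geq\tfrac12$ via the functional equation, control $\zeta_K'/\zeta_K$ through the partial-fraction expansion, and integrate horizontally---matches the paper's approach, which is modelled on \cite[Theorem~13.22]{MV}. You also correctly diagnose that the naive version of stages~1--2 only yields $(\log n)^2$.

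Where your proposal diverges from the paper is in the resolution of that sharpness problem. You offer two alternatives (a Jensen/refined-density argument \emph{or} a mean-value selection of $T_n$), but the paper's proof in fact uses \emph{both} ingredients in tandem, and the pigeonhole step in your Stage~1 is discarded entirely. Concretely: under RH the bound on $\zeta_K'/\zeta_K$ away from the line (the analogue of \cite[Theorem~13.13]{MV}) already gives $|\log\zeta_K(\sigma+it)|\ll\log\tau/\log\log\tau$ for $\sigma\geq\tfrac12+\tfrac{1}{\log\log\tau}$, so that region is handled directly. The same bound yields the refined count $N_K(T+\tfrac{1}{\log\log T})-N_K(T)\ll\log T/\log\log T$, which in turn localizes the partial-fraction sum to zeros with $|\gamma-t|\leq\tfrac{1}{\log\log\tau}$, up to an $O(\log\tau)$ error. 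Integrating this over a horizontal segment of length $O(1/\log\log T)$ converts the $O(\log\tau)$ into $O(\log\tau/\log\log\tau)$ and leaves a finite sum of terms $\log|T-\gamma|$ over the $\ll\log T/\log\log T$ nearby ordinates. Only \emph{then} does one average over $T\in[n,n+1)$: each $\int\log|T-\gamma|\,dT$ is $O(1)$, and the total number of relevant $\gamma$ in $[n-1,n+2]$ is $\ll\log n$, so the averaged bound is $\ll\log n/\log\log n$ and some $T_n$ attains it.

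The crucial missing specificity in your write-up is the scale $1/\log\log T$: it is what makes the error term in the localized partial-fraction formula integrate to the right size, and it is what makes the refined zero count give $\log T/\log\log T$ rather than $\log T$ terms. Your pigeonhole at scale $1/\log n$ cannot recover this; the averaging argument is not optional but essential.
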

This result is well-known in the case $K = \mathbb{Q}$ (see \cite[Theorem~13.22]{MV} or \cite[Theorem~14.16]{Titchmarsh}) and the proof for general $K$ is mostly analogous. A complete proof can be found in \cref{sec:details}.

\subsection{$\Omega$-type lemmas for $M_K(x)$ and disproofs of the \mertensconjecture in degenerate cases}
Fix any number field $K$. In this section, we provide~various conditional $\Omega$-type theorems for $M_K(x)$ which will reduce the unconditional disproofs in \cref{sec:disproof} to a number of conditional assumptions. The main result of this section is \cref{thm:mertens-very-fail}, which provides three cases in which the \mertensconjecture is guaranteed to fail. Essential in the following proofs will be the following well-known method of Landau.

\begin{lem}[Landau \protect{\cite[Theorem~15.1]{MV}}]\label{lem:Landau}
Suppose that $A(x)$ is a bounded Riemann-integrable function in any finite interval $1 \leq x \leq X$, and that $A(x) \geq 0$ for all $x > X_0$. Let $\sigma_c$ denote the infimum of those $\sigma$ for which $\int_{X_0}^\infty A(x)x^{-\sigma}\, dx < \infty$. Then the function
\begin{equation*}
    F(s) = \int_1^\infty A(x)x^{-s}\, dx
\end{equation*}
is analytic in the half-plane $\Real(s) > \sigma_c$, but not at the point $s = \sigma_c$.
\end{lem}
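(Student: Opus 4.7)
The plan is to split the lemma into its two assertions: analyticity on $\Real(s) > \sigma_c$ is a routine uniform-convergence argument, whereas the sharper claim that $F$ cannot be analytic at $s = \sigma_c$ proceeds by contradiction via the classical Landau technique of expanding $F$ in a Taylor series around a point well to the right of $\sigma_c$.

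For the analyticity, I would first write $F(s) = F_1(s) + F_2(s)$, where $F_1(s) = \int_1^{X_0} A(x) x^{-s}\, dx$ is entire (as $A$ is bounded on the compact interval $[1, X_0]$) and $F_2(s) = \int_{X_0}^\infty A(x) x^{-s}\, dx$. For any $s$ with $\Real(s) > \sigma_c$, pick $\sigma' \in (\sigma_c, \Real(s))$ for which $\int_{X_0}^\infty A(x) x^{-\sigma'}\, dx$ converges, and dominate $|A(x) x^{-s}|$ by $A(x) x^{-\sigma'}$ on any compact subset of $\{\Real(s) > \sigma'\}$; this yields absolute and locally uniform convergence of the integral defining $F_2$, hence analyticity on $\Real(s) > \sigma_c$.

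For the non-analyticity at $\sigma_c$, the argument is by contradiction. Suppose $F$ admits an analytic continuation to a neighborhood of $\sigma_c$; equivalently, so does $F_2$. Then there exists $\delta > 0$ such that the Taylor series of $F_2$ at the base point $s_0 = \sigma_c + 1$ has radius of convergence strictly greater than $1 + \delta$. Differentiation under the integral gives
\begin{equation*}
    F_2^{(k)}(s_0) = \int_{X_0}^\infty A(x) (-\log x)^k x^{-s_0}\, dx,
\end{equation*}
and evaluating the Taylor series at the real point $s = \sigma_c - \delta$ (so $(s - s_0)^k = (-1)^k (1+\delta)^k$), the nonnegativity of the integrand for $x > X_0$ permits interchanging the sum and integral via Tonelli's theorem. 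The resulting inner sum collapses to $\exp((1+\delta)\log x) = x^{1+\delta}$, yielding
\begin{equation*}
    F_2(\sigma_c - \delta) = \int_{X_0}^\infty A(x) x^{-(\sigma_c - \delta)}\, dx.
\end{equation*}
In particular the right-hand side converges, contradicting the definition of $\sigma_c$ as an infimum since $\sigma_c - \delta < \sigma_c$.

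The main subtlety is the interchange of summation and integration, but this is precisely where the nonnegativity hypothesis on $A(x)$ past $X_0$ earns its keep: with all terms nonnegative, Tonelli applies without issue. A secondary point is verifying that if $F_2$ is analytic at $\sigma_c$, then the Taylor series at $s_0 = \sigma_c + 1$ truly has radius of convergence exceeding $1$; this follows by covering a slightly enlarged open disk around $s_0$ by the union of the right half-plane $\{\Real(s) > \sigma_c\}$ with a small analytic neighborhood of $\sigma_c$, an elementary geometric calculation showing that for any $\eta > 0$ one may choose $\delta > 0$ with $\sqrt{2\delta + \delta^2} < \eta$.
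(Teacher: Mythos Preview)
Your argument is correct and is precisely the classical Landau proof (as given, e.g., in Montgomery--Vaughan, to which the paper refers). The paper itself does not prove this lemma but simply cites \cite[Theorem~15.1]{MV}, so there is no in-paper proof to compare against; your write-up is a faithful reconstruction of that standard argument, including the careful handling of the radius-of-convergence step via the geometric cap estimate.
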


We shall demonstrate that most cases in which zeros of $\zeta_K(s)$ behave contrary to conventional expectations (i.e. the Riemann hypothesis and simplicity of zeros) will degenerate into falsity of the \mertensconjecture over $K$. One additional possible obstruction present in the case of Dedekind zeta functions, which is not so in the case of the Riemann zeta function, is the presence of nontrivial zeros on the real line. Let
\begin{align*}
    &\Theta = \sup_{\zeta_K(\rho) = 0} \{ \Real(\rho)\} = \max\{\Theta', \Theta''\}\quad \\ &\text{ where }  \Theta' = \sup_{\substack{\zeta_K(\rho) = 0 \\ \rho \in (0,1)}} \rho, \quad \Theta'' = \sup_{\substack{\zeta_K(\rho) = 0 \\ \Imag(\rho) \neq 0}} \{ \Real(\rho)\}.
\end{align*}
If no zero $\rho \in (0,1)$ exists, we set $\Theta' = 0$. In particular, $\Theta \geq \frac{1}{2}$ and the Riemann hypothesis for $\zeta_K(s)$ is the statement $\Theta = \frac{1}{2}$. The following is our first degenerate case.

\begin{prop}\label{prop:noRH-rightmost-notreal}
If $\zeta_K(s)$ does not satisfy the Riemann hypothesis, and $\Theta' < \Theta$, then the \mertensconjecture over $K$ is false. 
More precisely, $M_K(x) = \Omega_{\pm}(x^{\Theta - \epsilon})$ for all $0 < \epsilon < \Theta - \Theta'$.
\end{prop}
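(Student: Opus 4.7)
The plan is to proceed by contradiction using Landau's method (\cref{lem:Landau}) together with the Mellin transform identity \eqref{eq:M_K(x)}. Assume, toward a contradiction, that there exist $C > 0$ and $X_0 \geq 1$ with $M_K(x) \leq Cx^{\Theta - \epsilon}$ for all $x > X_0$. Define $A(x) \coloneqq Cx^{\Theta - \epsilon} - M_K(x)$, which is nonnegative for $x > X_0$ and (being the difference of a continuous function and a step function) is Riemann-integrable on bounded intervals. Since $M_K(x) = O(x)$ trivially, the Mellin integral
\begin{equation*}
F(s) \coloneqq \int_1^\infty A(x) x^{-s-1} \, dx
\end{equation*}
converges absolutely for $\Real(s) > 1$, so \cref{lem:Landau} furnishes an abscissa of convergence $\sigma_c \leq 1$ at which $F$ is singular, while $F$ is analytic on $\Real(s) > \sigma_c$.

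Next I would identify, for $\Real(s) > 1$,
\begin{equation*}
F(s) = \frac{C}{s - (\Theta - \epsilon)} - \frac{1}{s\zeta_K(s)},
\end{equation*}
using \eqref{eq:M_K(x)}. The key observation is that this identity extends by analytic continuation (via the identity theorem on the connected open set obtained from $\{\Real(s) > \sigma_c\}$ by deleting the isolated singularities of the right-hand side). Were $\zeta_K$ to possess a nontrivial zero $\rho$ with $\Real(\rho) > \sigma_c$, the right-hand side would exhibit a pole at $\rho$ while $F$ is analytic there, a contradiction. Hence $\sigma_c \geq \Real(\rho)$ for every nontrivial zero $\rho$, and in particular $\sigma_c \geq \Theta$.

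On the other hand, by \cref{lem:Landau}, the abscissa $\sigma_c$ is itself a real singularity of $F$. Real singularities of the displayed formula in $(0, \infty)$ occur only at $\Theta - \epsilon$ and at real zeros of $\zeta_K$ in $(0,1)$, the latter bounded above by $\Theta'$. Our hypothesis $\epsilon < \Theta - \Theta'$ ensures every such real singularity is at most $\Theta - \epsilon < \Theta \leq \sigma_c$, contradicting the status of $\sigma_c$ as a singularity. This proves $M_K(x) = \Omega_+(x^{\Theta - \epsilon})$; the symmetric $\Omega_-$ bound follows by running the same argument with $B(x) \coloneqq C x^{\Theta - \epsilon} + M_K(x)$, whose Mellin transform is $\frac{C}{s - (\Theta - \epsilon)} + \frac{1}{s\zeta_K(s)}$.

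The main delicacy is the analytic-continuation step used to conclude $\sigma_c \geq \Theta$: one must verify that the open region $\{\Real(s) > \sigma_c\}$ minus the poles of the right-hand side is connected and overlaps the half-plane $\{\Real(s) > 1\}$ where the original identity holds, so that the identity theorem propagates the formula throughout. The remaining work is routine bookkeeping to confirm that the hypothesis $\Theta' < \Theta - \epsilon$ excludes all real singularities of $F$ in the critical interval $(\Theta - \epsilon, \infty)$, and that the disproof of the \mertensconjecture then follows upon selecting $\epsilon > 0$ small enough that $\Theta - \epsilon > \frac{1}{2}$ (which is possible because RH fails, so $\Theta > \frac{1}{2}$).
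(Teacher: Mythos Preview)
Your proposal is correct and follows essentially the same approach as the paper: both apply Landau's theorem to the Mellin transform of $Cx^{\Theta-\epsilon}-M_K(x)$ (the paper takes $C=1$), identify the transform with $\frac{C}{s-(\Theta-\epsilon)}-\frac{1}{s\zeta_K(s)}$, and derive a contradiction between the analyticity forced by Landau's lemma and the poles of $1/\zeta_K$ with real part exceeding $\Theta-\epsilon$. Your version is slightly more explicit about naming the abscissa $\sigma_c$ and justifying the analytic continuation, but there is no substantive difference.
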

\begin{proof}
Suppose that $M(x) < x^{\Theta - \epsilon}$ for all $x > X_0(\epsilon)$. Consider, in view of~\eqref{eq:M_K(x)}, the function
\begin{equation*}
    \frac{1}{s - \Theta + \epsilon} - \frac{1}{s\zeta_K(s)} = \int_1^\infty (x^{\Theta - \epsilon} - M_K(x))x^{-s-1} \, dx.
\end{equation*}
Here the left-hand side has a pole at $\Theta - \epsilon$, but is analytic for real $s > \Theta - \epsilon$, since $\zeta_K(s)$ has no zeros to the right of $\Theta - \epsilon$ on the real line. The integrand of the right-hand side~is~non-negative for all $x > X_0(\epsilon)$. By an application of \cref{lem:Landau}, the above identity holds for $\Real(s) > \Theta - \epsilon$, and both sides are analytic in this half-plane. But by definition of $\Theta$, the function $1/\zeta_K$ has poles with real part $> \Theta - \epsilon$, which is a contradiction. Hence, we deduce that $M(x) = \Omega_+(x^{\Theta - \epsilon})$.

To obtain the $\Omega_-$ estimate, we argue similarly using the identity
\begin{equation*}
    \frac{1}{s - \Theta + \epsilon} + \frac{1}{s\zeta_K(s)} = \int_1^\infty (x^{\Theta - \epsilon} + M_K(x))x^{-s-1} \, dx.
\end{equation*}
We then conclude that $M_K(x) = \Omega_{\pm}(x^{\Theta - \epsilon})$. Specifically, this gives the falsity of the \mertensconjecture over $K$, since $\Theta > \frac{1}{2}$.
\end{proof}

If $\Theta' = \Theta$, then since real zeros must be isolated, there is a zero at $s = \Theta$ which prohibits the application of \cref{lem:Landau} in the proof above. In this case, we give a result in lieu of the previous proposition, which involves the elimination of the polar behavior of $1/s\zeta_K(s)$ at the real nontrivial zeros of $\zeta_K(s)$. More precisely, there exist constants $c_{k,\alpha} \in \mathbb{C}$, $1 \leq k \leq m_{\alpha}$, for each of the finitely many real zeros $\alpha \in (0,1)$, such that the function
\begin{equation}\label{eqn:c-tilde-alpha}
    \frac{1}{s\zeta_K(s)} - \sum_{\substack{\zeta_K(\alpha) = 0 \\ \alpha \in (0,1)}} \sum_{k=1}^{m_\alpha} \frac{c_{k,\alpha}}{(s - \alpha)^k} 
\end{equation}
is analytic on the segment $(0,1)$ on the real line, where $m_\alpha$ is the multiplicity of the zero at $\alpha$. Upon this simplification, we may provide the following result.

\begin{prop}\label{prop:noRH-rightmost-real}
Suppose that $\zeta_K(s)$ does not satisfy the Riemann hypothesis, and $\Theta' = \Theta$, $\frac{1}{2} < \Theta'' \leq \Theta$. Let $c_{k,\alpha}$ be as in the equation \eqref{eqn:c-tilde-alpha}. Then for all $0 < \epsilon < \Theta'' - \frac{1}{2}$,
\begin{equation*}
    M_K(x) - \tilde M_K(x) = \Omega_{\pm}(x^{\Theta'' - \epsilon})
\end{equation*}
where
\begin{equation}\label{eq:tilde-M-K}
    \tilde M_K(x) \coloneqq \sum_{\substack{\zeta_K(\alpha) = 0 \\ \alpha \in (0,1)}} \sum_{k=1}^{m_\alpha} \frac{c_{k,\alpha} x^\alpha (\log x)^{k-1}}{(k-1)!\!}.
\end{equation}
In particular, the \mertensconjecture over $K$ is false.
\end{prop}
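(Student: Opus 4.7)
The plan is to mimic the Landau-type argument of \cref{prop:noRH-rightmost-notreal}, with the modification that I subtract off the polar contributions of the real nontrivial zeros of $\zeta_K$ in $(0,1)$. The crucial observation is that $\tilde M_K$ is precisely the inverse Mellin transform of the principal parts subtracted in \eqref{eqn:c-tilde-alpha}. Indeed, the substitution $u=\log x$ gives
\begin{equation*}
\int_1^\infty \frac{x^\alpha(\log x)^{k-1}}{(k-1)!}\cdot\frac{dx}{x^{s+1}}=\frac{1}{(s-\alpha)^k}\qquad(\Real(s)>\alpha),
\end{equation*}
and summing over real zeros $\alpha\in(0,1)$ and $1\le k\le m_\alpha$, then combining with \eqref{eq:M_K(x)}, produces for $\Real(s)>\Theta$ the identity
\begin{equation*}
F(s)\coloneqq\frac{1}{s\zeta_K(s)}-\sum_{\substack{\zeta_K(\alpha)=0\\\alpha\in(0,1)}}\sum_{k=1}^{m_\alpha}\frac{c_{k,\alpha}}{(s-\alpha)^k}=\int_1^\infty\frac{M_K(x)-\tilde M_K(x)}{x^{s+1}}\,dx.
\end{equation*}
By the defining property of the $c_{k,\alpha}$, the function $F$ extends analytically to the entire positive real axis (with $s=1$ harmless since $\zeta_K$ has a pole there), while retaining all poles of $1/(s\zeta_K(s))$ at the non-real nontrivial zeros of $\zeta_K$, because the subtracted sum has poles only at real points.

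To establish $M_K-\tilde M_K=\Omega_+(x^{\Theta''-\epsilon})$, I assume for contradiction that $M_K(x)-\tilde M_K(x)\le x^{\Theta''-\epsilon}$ for all $x>X_0$. Then the identity
\begin{equation*}
\frac{1}{s-\Theta''+\epsilon}-F(s)=\int_1^\infty\frac{x^{\Theta''-\epsilon}-(M_K(x)-\tilde M_K(x))}{x^{s+1}}\,dx
\end{equation*}
has an eventually non-negative integrand, so \cref{lem:Landau} forces its left-hand side to be analytic on some half-plane $\Real(s)>\sigma_c$ with $\sigma_c\le\Theta''-\epsilon$. However, by definition of $\Theta''$ there exists a non-real nontrivial zero $\rho$ of $\zeta_K$ with $\Real(\rho)>\Theta''-\epsilon$, and $F$ has a pole at $\rho$ by the discussion above, yielding a contradiction. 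Reversing the sign of $M_K-\tilde M_K$ throughout and repeating the argument gives the corresponding $\Omega_-$ bound.

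For the ``in particular'' clause, I use that the leading asymptotic of $\tilde M_K(x)$ is $\frac{c_{m_\Theta,\Theta}}{(m_\Theta-1)!}\,x^{\Theta}(\log x)^{m_\Theta-1}$ with nonzero leading coefficient, so $\tilde M_K(x)$ has a definite sign for all $x$ sufficiently large and grows of order at least $x^{\Theta}\gg x^{1/2}$. Combining this with the $\Omega_\pm$ bound in the matching direction produces a sequence $x_n\to\infty$ along which $|M_K(x_n)|\gg x_n^{\Theta''-\epsilon}$ with $\Theta''-\epsilon>\tfrac12$, violating the \mertensconjecture. The main obstacle to anticipate is the careful bookkeeping at the interface of the two arguments: one must verify that $F$ is simultaneously analytic on the positive real axis (which holds because the subtracted sum removes exactly the principal parts at the real zeros) and retains a singularity strictly to the right of $\Theta''-\epsilon$ (which holds because the subtracted sum contains no non-real poles). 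Granted this, the Landau mechanism proceeds exactly as in \cref{prop:noRH-rightmost-notreal}.
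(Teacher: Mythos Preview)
Your proposal is correct and follows essentially the same Landau-type approach as the paper: subtract the principal parts of $1/(s\zeta_K(s))$ at the real zeros to obtain a function analytic on the positive real axis, then apply \cref{lem:Landau} to reach a contradiction with the non-real poles in $\Real(s)>\Theta''-\epsilon$. Your write-up is in fact slightly more explicit than the paper's (you verify the Mellin identity for $\tilde M_K$ directly and consistently use $\Theta''-\epsilon$, whereas the paper's proof writes $\Theta-\epsilon$), but the underlying argument is identical.
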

\begin{proof}
Suppose that $M_K(x) - \tilde M_K(x) < x^{\Theta - \epsilon}$ for all $x > X_0(\epsilon)$. Now
\begin{align*}
    \frac{1}{s-\Theta +\epsilon} - \frac{1}{s\zeta_K(s)} + \sum_{\substack{\zeta_K(\alpha) = 0 \\ \alpha \in (0,1)}} \sum_{k=1}^{m_\alpha} \frac{c_{k,\alpha}}{(s - \alpha)^k}
    = \int_1^\infty ( x^{\Theta - \epsilon} - M_K(x) + \tilde M_K(x))x^{-s-1}\, dx
\end{align*}
holds for all real $s > \Theta - \epsilon$, and by \cref{lem:Landau} extends to $\Real(s) > \Theta - \epsilon$. From here, the~proof is similar to that of Proposition \ref{prop:noRH-rightmost-notreal}. For the last statement, note that $s = \Theta$ is one of the $\alpha$ in the sum $\tilde M_K(x)$. Since $\Theta > \frac{1}{2}$, this sum surpasses the order of $x^{1/2}$, and since $M_K(x) - \tilde M_K(x)$ is oscillatory we deduce the falsity of the \mertensconjecture over $K$.
\end{proof}

It remains to address the case where $\frac{1}{2} = \Theta'' < \Theta' = \Theta$. We give the following auxiliary result, which is independent of RH.

\begin{prop}\label{prop:zero-on-rightmost-line}
Suppose $\Theta$, $\Theta'$, $\Theta''$ are as above, and that there is a non-real zero $\rho$ of $\zeta_K$ of multiplicity $m \geq 1$ with $\Real(\rho) = \Theta''$, say $\rho = \Theta'' + i\gamma$. Then
\begin{equation*}
    M_K(x) - \tilde M_K(x) = \Omega_{\pm}(x^{\Theta''}(\log x)^{m-1}),
\end{equation*}
where $\tilde M_K(x)$ is defined as in \eqref{eq:tilde-M-K}. More precisely,
\begin{equation*}
    \liminf_{x \rightarrow \infty} \frac{M_K(x) - \tilde M_K(x)}{x^{\Theta''}(\log x)^{m-1}} \leq -\frac{m}{|\rho \zeta_K^{(m)}(\rho)|}<\frac{m}{|\rho \zeta_K^{(m)}(\rho)|}\leq \limsup_{x \rightarrow \infty} \frac{M_K(x) - \tilde M_K(x)}{x^{\Theta''}(\log x)^{m-1}}.
\end{equation*}
\end{prop}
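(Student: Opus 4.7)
The approach is a Landau-style Mellin transform argument by contradiction, proving the $\limsup$ inequality first; the $\liminf$ inequality follows by a symmetric argument. Suppose for contradiction that $\limsup_{x \to \infty}(M_K(x) - \tilde M_K(x))/(x^{\Theta''}(\log x)^{m-1}) < m/|\rho \zeta_K^{(m)}(\rho)|$, so there exist $B < m/|\rho \zeta_K^{(m)}(\rho)|$ and $X_0 \geq 1$ with
$$A(x) := B x^{\Theta''}(\log x)^{m-1} - (M_K(x) - \tilde M_K(x)) \geq 0 \quad \text{for all } x > X_0.$$
For $\Real(s)$ sufficiently large, the representation~\eqref{eq:M_K(x)}, the Mellin transform of $\tilde M_K$ (which yields $G(s) := \sum_{\alpha,k} c_{k,\alpha}/(s-\alpha)^k$ with the $c_{k,\alpha}$ as in~\eqref{eqn:c-tilde-alpha}), and the identity $\int_1^\infty x^{\Theta''}(\log x)^{m-1} x^{-s-1}\, dx = (m-1)!/(s-\Theta'')^m$ combine to give
$$F(s) := \int_1^\infty A(x) x^{-s-1}\, dx = \frac{B(m-1)!}{(s-\Theta'')^m} - \frac{1}{s\zeta_K(s)} + G(s).$$
By the definition of $\Theta''$ no complex zero of $\zeta_K$ lies in the open half-plane $\Real(s) > \Theta''$, and $G$ is engineered precisely to cancel the poles coming from real zeros there, so the meromorphic right-hand side is analytic on $\Real(s) > \Theta''$. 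Landau's lemma (\cref{lem:Landau}) then forces the integral defining $F$ to converge and agree with this expression throughout $\Real(s) > \Theta''$.

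The next step is to exploit the positivity of $A$ on $[X_0, \infty)$ via the standard estimate
$$|F(\sigma + it)| \leq F(\sigma) + C, \qquad \sigma \in (\Theta'', \Theta'' + 1],\ t \in \RR,$$
where $C := 2\int_1^{X_0} |A(x)| x^{-\Theta'' - 1}\, dx < \infty$, obtained by splitting the integral at $X_0$ and using $|x^{-it}| = 1$. I would then compare both sides at $s = \rho + \epsilon$ and $s = \Theta'' + \epsilon$ as $\epsilon \to 0^+$. Using $\zeta_K(s) = \frac{\zeta_K^{(m)}(\rho)}{m!}(s-\rho)^m + O((s-\rho)^{m+1})$ near the order-$m$ zero at $\rho$, together with boundedness of the other terms of $F$ at $\rho + \epsilon$ (since $\rho - \Theta'' = i\gamma \neq 0$ keeps $B(m-1)!/(s-\Theta'')^m$ finite and $G$ has poles only at real points), one obtains
$$|F(\rho + \epsilon)| = \frac{m!}{|\rho \zeta_K^{(m)}(\rho)|\, \epsilon^m} + O(\epsilon^{-(m-1)}).$$
At the real point $\Theta'' + \epsilon$, the terms $-1/(s\zeta_K(s)) + G(s)$ remain bounded (even if $\Theta''$ itself happens to be a real zero of $\zeta_K$, whose pole $G$ is constructed to remove), so $F(\Theta'' + \epsilon) = B(m-1)!/\epsilon^m + O(1)$.

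Substituting these asymptotics into the positivity bound and multiplying through by $\epsilon^m$ before sending $\epsilon \to 0^+$ produces $m!/|\rho \zeta_K^{(m)}(\rho)| \leq B(m-1)!$, i.e.\ $m/|\rho \zeta_K^{(m)}(\rho)| \leq B$, contradicting the choice of $B$. The $\liminf$ inequality is derived by the parallel argument: assuming $M_K(x) - \tilde M_K(x) > -B x^{\Theta''}(\log x)^{m-1}$ for $x > X_0$ with $B < m/|\rho \zeta_K^{(m)}(\rho)|$, one applies the same framework to $A(x) := (M_K(x) - \tilde M_K(x)) + B x^{\Theta''}(\log x)^{m-1}$, whose Mellin transform equals $1/(s\zeta_K(s)) - G(s) + B(m-1)!/(s-\Theta'')^m$, and the identical $\epsilon^m$-scaled comparison yields the analogous contradiction. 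The principal technical subtlety I anticipate is verifying the cancellation of $-1/(s\zeta_K(s))$ with $G(s)$ uniformly in a right-neighborhood of $\Theta''$ when $\Theta''$ itself is a real zero of $\zeta_K$, but this is guaranteed by the very construction of the principal-part coefficients $c_{k,\alpha}$ in~\eqref{eqn:c-tilde-alpha}.
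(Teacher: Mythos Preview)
Your proof is correct and takes a genuinely different route from the paper's. Both arguments begin identically: assume a one-sided bound with constant $B<m/|\rho\zeta_K^{(m)}(\rho)|$, form the same Mellin transform $F(s)$, and invoke \cref{lem:Landau} to extend $F$ analytically to $\Real(s)>\Theta''$. The divergence is in how the contradiction is extracted. The paper uses Ingham's phase-combination trick: it forms $F(s)+\tfrac12 e^{i\phi}F(s+i\gamma)+\tfrac12 e^{-i\phi}F(s-i\gamma)$, whose integral representation acquires the non-negative kernel $1+\cos(\phi-\gamma\log x)$, and then reads off the sign of the leading Laurent coefficient at $s=\Theta''$, optimizing over $\phi$ at the end. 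You instead use the direct positivity bound $|F(\sigma+it)|\leq F(\sigma)+C$ and compare the asymptotics of $|F(\rho+\epsilon)|$ against $F(\Theta''+\epsilon)$ as $\epsilon\to 0^+$. Your approach is arguably cleaner here---no auxiliary phase parameter, no need to identify a specific Laurent coefficient in a three-term sum---while the paper's method has the advantage of being the classical template (Ingham, Landau) that immediately generalizes to weighted combinations involving several zeros simultaneously. Both routes land on exactly the same sharp constant $m/|\rho\zeta_K^{(m)}(\rho)|$.
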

\begin{proof}
Suppose that $M_K(x) - \tilde M_K(x) \leq cx^{\Theta''}(\log x)^{m-1}$ for all $x > X_0$. It suffices to prove that $c \geq m/|\rho \zeta_K^{(m)}(\rho)|$. Consider the function
\begin{dmath*}
    \frac{c(m-1)!\!}{(s-\Theta'')^m} - \frac{1}{s \zeta_K(s)} + \sum_{\substack{\zeta_K(\alpha) = 0 \\ \alpha \in (0,1)}} \frac{c_{k,\alpha} x^{\alpha} (\log x)^{k-1}}{(k-1)!\!}= \int_1^\infty (cx^{\Theta''}(\log x)^{m-1} - M_K(x) + \tilde M_K(x))x^{-s-1} \, dx
\end{dmath*}
for real $s > \Theta''$. By \cref{lem:Landau}, this can be extended to $\Real(s) > \Theta''$. Denote the function $F(s)$. Then
\begin{dmath*}
    F(s) + \frac{1}{2}e^{i\phi}F(s + i\gamma) + \frac{1}{2}e^{-i\phi}F(s - i \gamma) = {\int_1^\infty (cx^{\Theta''}(\log x)^{m-1} - M_K(x) + \tilde M_K(x))(1 + \cos(\phi - \gamma \log x) x^{-s-1})\, dx}
\end{dmath*}
for $\Real(s) > \Theta''$. On the right-hand side, the integral from $1$ to $X_0$ is uniformly bounded,~while the integral from $X_0$ to $\infty$ is non-negative. Thus the $\liminf$ of the right-hand side is bounded below as $s \rightarrow \Theta^+$. As a result, the coefficient of $(s - \Theta'')^{-m}$ in its Laurent series must be non-negative. On the other hand, the left-hand side has a pole of multiplicity $m$ at $s = \Theta''$, at which the Laurent series expansion contains a term $(s - \Theta'')^{-m}$ with coefficient equal to 
\begin{equation*}
    c(m-1)!\! - \frac{m!\! e^{i\phi}}{2\rho \zeta_K^{(m)}(\rho)} - \frac{m!\! e^{-i\phi}}{2\bar \rho \zeta_K^{(m)}(\bar \rho)}.
\end{equation*}
Choosing $\phi$ so that
\begin{equation*}
    e^{i\phi} = \frac{\rho \zeta_K^{(m)}(\rho)}{|\rho \zeta_K^{(m)}(\rho)|}.
\end{equation*}
Then the above is $(m-1)!\!(c - m/|\rho \zeta_K'(\rho)|)$. This quantity must be non-negative, otherwise~the left-hand side would tend to $-\infty$ as $s \rightarrow \Theta^+$. Hence $c \geq m/|\rho \zeta_K^{(m)}(\rho)|$. The $\Omega_-$ case~is~similar.
\end{proof}

In the scenario where $\frac{1}{2} = \Theta'' < \Theta' = \Theta$, we deduce from the previous proposition that the \mertensconjecture is false over $K$. This completes the discussion under falsity of the Riemann hypothesis for $\zeta_K(s)$. 

Turning to results that are instead conditional on the Riemann hypothesis for $K$, Proposition \ref{prop:zero-on-rightmost-line} in fact yields the following immediate corollary.

\begin{cor}\label{cor:RH-multiple-zero}
Assume the Riemann hypothesis for $\zeta_K(s)$, and that $\zeta_K(s)$ has a zero $\rho = \frac{1}{2} + i\gamma$, $\gamma > 0$, of multiplicity $m \geq 1$. Then
\begin{equation*}
    M_K(x) - \tilde M_K(x) = \Omega_{\pm}(x^{1/2}(\log x)^{m-1}),
\end{equation*}
where $\tilde M_K(x)$ defined as in \eqref{eq:tilde-M-K} is the part corresponding to a possible zero at $s = \frac{1}{2}$.
\end{cor}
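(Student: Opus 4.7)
The corollary should follow almost immediately from \cref{prop:zero-on-rightmost-line}; my plan is simply to check that the hypotheses of the proposition are met and that the definition of $\tilde M_K(x)$ simplifies in the way the statement claims.

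First, I would translate the Riemann hypothesis for $\zeta_K(s)$ into a statement about the quantities $\Theta$, $\Theta'$, and $\Theta''$. Since every nontrivial zero of $\zeta_K(s)$ lies on the line $\Real(s) = \frac{1}{2}$, and since the existence of a non-real zero $\rho = \tfrac{1}{2} + i\gamma$ is given, one has $\Theta'' = \tfrac{1}{2}$; meanwhile the only real nontrivial zero that can occur in $(0,1)$ under RH is at $\alpha = \tfrac{1}{2}$, so $\Theta' \leq \tfrac{1}{2}$ and consequently $\Theta = \tfrac{1}{2}$ as well. This places us squarely in the setting of \cref{prop:zero-on-rightmost-line} with $\Theta'' = \tfrac{1}{2}$ and the non-real zero $\rho$ of multiplicity $m$.

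Next, I would observe that the sum
\begin{equation*}
    \tilde M_K(x) = \sum_{\substack{\zeta_K(\alpha) = 0 \\ \alpha \in (0,1)}} \sum_{k=1}^{m_\alpha} \frac{c_{k,\alpha} x^\alpha (\log x)^{k-1}}{(k-1)!\!}
\end{equation*}
from \eqref{eq:tilde-M-K} collapses under RH: the outer sum indexes real zeros of $\zeta_K$ in $(0,1)$, and the only candidate is $\alpha = \tfrac{1}{2}$. Hence either $\tilde M_K(x) \equiv 0$ (if $\zeta_K(\tfrac{1}{2}) \neq 0$) or $\tilde M_K(x)$ consists solely of terms of the form $c_{k,1/2}\, x^{1/2}(\log x)^{k-1}/(k-1)!$ arising from the possible zero at $s = \tfrac{1}{2}$, matching the parenthetical in the statement.

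Finally, invoking \cref{prop:zero-on-rightmost-line} with $\Theta'' = \tfrac{1}{2}$ yields $M_K(x) - \tilde M_K(x) = \Omega_{\pm}(x^{1/2}(\log x)^{m-1})$, which is exactly the claim. There is no substantive obstacle here: the work is entirely in the preceding proposition, and this corollary is a matter of specializing $\Theta'' = \tfrac{1}{2}$ and verifying that the definition of $\tilde M_K$ reduces as claimed.
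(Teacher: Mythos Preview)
Your proposal is correct and follows essentially the same approach as the paper: under RH one has $\Theta'' = \tfrac{1}{2} = \Theta$ (with $\Theta' = \tfrac{1}{2}$ if a real zero exists), and \cref{prop:zero-on-rightmost-line} applies directly. Your extra verification that $\tilde M_K(x)$ reduces to the contribution from a possible zero at $s = \tfrac{1}{2}$ is a helpful clarification, but the core argument is identical.
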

\begin{proof}
In this case, $\frac{1}{2} = \Theta = \Theta''$ (and $= \Theta'$, if a real zero exists).
\end{proof}

\cref{cor:RH-multiple-zero} implies in particular that if $\zeta_K(s)$ satisfies the Riemann hypothesis, but possesses nontrivial zeros with multiplicity $m \geq 2$ (at $s = \frac{1}{2}$ or otherwise), then the \mertensconjecture is false. Therefore, the only non-degenerate case of the \mertensconjecture occurs when $\zeta_K(s)$ satisfies the Riemann hypothesis and has only simple nontrivial zeros (a priori none at $s = \frac{1}{2}$).

The following theorem summarizes the results of this section.

\begin{thm}\label{thm:mertens-very-fail} In the following special cases, $M_K(x)$ grows more quickly than $\sqrt x$.
\begin{enumerate}[label=(\alph*),font=\normalfont]
    
    \item If the Riemann hypothesis for $\zeta_K(s)$ fails with $\Theta=\Theta''>\Theta'$, then for all $\epsilon > 0$,
    $$M_K(x)=\Omega_{\pm}(x^{\Theta - \epsilon}).$$
    If there is a zero $\Theta+i\gamma$ of $\zeta_K(s)$, then $M_K(x)=\Omega_\pm(x^\Theta)$. 
    
    \item If the Riemann hypothesis for $\zeta_K(s)$ fails and $\Theta = \Theta'$, then
    \begin{equation*}
        M_K(x) = \Omega(x^{\Theta})
    \end{equation*}
    (in possibly only one direction).
    
    \item If the Riemann hypothesis for $\zeta_K(s)$ holds but there is a zero $\rho=\frac12+i\gamma$, $\gamma > 0$, of multiplicity $m\geq1$, then
    $$M_K(x)=\Omega\left(\sqrt x(\log x)^{m-1}\right).$$
    If $\zeta_K(\frac{1}{2})\neq 0$, or if the multiplicity of the zero of $\zeta_K$ at $s=\frac{1}{2}$ is strictly less than $m$, then this $\Omega$ can be replaced with $\Omega_\pm$.
\end{enumerate}
\end{thm}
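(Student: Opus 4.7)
The plan is to derive each part of the theorem directly from the propositions and corollary established in this section. The guiding observation is that the correction function $\tilde M_K(x)$ in \eqref{eq:tilde-M-K} consists entirely of terms $c_{k,\alpha}x^\alpha(\log x)^{k-1}/(k-1)!$ with $\alpha \leq \Theta'$; in every case of the theorem, this correction is either identically zero, of strictly smaller order than the claimed lower bound, or of the same order but with a definite sign, so that the $\Omega_\pm$ or $\Omega$ estimate for $M_K(x) - \tilde M_K(x)$ transfers appropriately to $M_K(x)$.

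For part (a), the first assertion is exactly \cref{prop:noRH-rightmost-notreal}. For the sharper bound in the presence of a zero $\rho = \Theta + i\gamma$, I would apply \cref{prop:zero-on-rightmost-line} (using $\Real(\rho) = \Theta = \Theta''$) to obtain $M_K(x) - \tilde M_K(x) = \Omega_\pm(x^\Theta(\log x)^{m-1})$. Since $\Theta' < \Theta$, every term of $\tilde M_K(x)$ is $O(x^{\Theta'}(\log x)^d) = o(x^\Theta)$ for some $d$, so the $\Omega_\pm$ passes directly to $M_K(x)$.

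For part (b), assume $\Theta = \Theta'$, so that a real zero sits at $\Theta$ with some multiplicity $m_0 \geq 1$ and $\tilde M_K(x)$ has a leading term $\frac{c_{m_0,\Theta}}{(m_0-1)!}x^\Theta(\log x)^{m_0-1}$ of fixed sign. If $\Theta'' = \Theta$, \cref{prop:zero-on-rightmost-line} applied to a non-real zero $\Theta + i\gamma$ yields $M_K(x) - \tilde M_K(x) = \Omega_\pm(x^\Theta(\log x)^{m-1})$. If $\Theta'' < \Theta$, I would argue by \cref{lem:Landau} applied to the integrand $cx^\Theta - (M_K(x) - \tilde M_K(x))$, using that the modified transform in \eqref{eqn:c-tilde-alpha} is analytic on $\Real(s) > \Theta''$ and in particular at $s = \Theta$, to derive a contradiction with the hypothesis that $M_K(x) - \tilde M_K(x)$ is bounded above (or below) by $cx^\Theta$ for arbitrarily small $c > 0$. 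Combining the fixed-sign leading term of $\tilde M_K(x)$ with either conclusion gives $M_K(x) = \Omega(x^\Theta)$ in at least one direction, which may be only one-sided if the oscillation in $M_K - \tilde M_K$ is dominated by the leading $\tilde M_K$ contribution.

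Part (c) is a direct restatement of \cref{cor:RH-multiple-zero}, yielding $M_K(x) - \tilde M_K(x) = \Omega_\pm(\sqrt{x}(\log x)^{m-1})$, where now $\tilde M_K(x)$ captures only a possible zero at $s = 1/2$. If $\zeta_K(1/2) \neq 0$ then $\tilde M_K \equiv 0$, and if the zero at $1/2$ has multiplicity $m_0 < m$ then $\tilde M_K(x) = O(\sqrt{x}(\log x)^{m_0-1}) = o(\sqrt{x}(\log x)^{m-1})$; in both situations the $\Omega_\pm$ transfers directly to $M_K(x)$. Otherwise $\tilde M_K(x)$ is of order at least $\sqrt x(\log x)^{m-1}$ with a definite sign, which may cancel one side of the $\Omega_\pm$ and leave only a one-sided $\Omega$. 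The principal obstacle in this proof is the Landau argument for part (b) when $\Theta'' < \Theta = \Theta'$; the rest is essentially a bookkeeping consolidation of \cref{prop:noRH-rightmost-notreal}, \cref{prop:noRH-rightmost-real}, \cref{prop:zero-on-rightmost-line}, and \cref{cor:RH-multiple-zero}.
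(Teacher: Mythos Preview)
Your treatment of parts (a) and (c) is correct and matches the paper: these are immediate from \cref{prop:noRH-rightmost-notreal}, \cref{prop:zero-on-rightmost-line}, and \cref{cor:RH-multiple-zero}, together with the observation that $\tilde M_K(x)$ is either negligible or of fixed sign.

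There is a genuine gap in your handling of part (b), specifically the sub-case $\Theta'' < \Theta = \Theta'$. Your proposed Landau argument does not produce a contradiction. Applying \cref{lem:Landau} to $cx^\Theta - (M_K(x) - \tilde M_K(x)) \geq 0$ yields that
\[
\frac{c}{s-\Theta} - \left(\frac{1}{s\zeta_K(s)} - \sum_{\alpha}\sum_{k}\frac{c_{k,\alpha}}{(s-\alpha)^k}\right)
\]
is analytic in $\Real(s) > \sigma_c$ with a real singularity at $\sigma_c$. Since the bracketed modified transform is indeed analytic for $\Real(s) > \Theta''$, the only real singularity of the displayed function in $\Real(s) > \Theta''$ is the simple pole at $s = \Theta$ coming from $c/(s-\Theta)$. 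Landau then gives $\sigma_c = \Theta$, which is perfectly consistent with the assumed bound; nothing is contradicted. In fact one expects $M_K - \tilde M_K$ to be of order roughly $x^{\Theta''}$, not $x^\Theta$, so there is no $\Omega(x^\Theta)$ statement for $M_K - \tilde M_K$ to extract here.

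The paper's route avoids this entirely. One does not split on $\Theta'' = \Theta$ versus $\Theta'' < \Theta$, but rather on $\Theta'' > \tfrac12$ versus $\Theta'' = \tfrac12$: in the first case \cref{prop:noRH-rightmost-real} already gives $M_K - \tilde M_K = \Omega_\pm(x^{\Theta''-\epsilon})$, and in the second case \cref{prop:zero-on-rightmost-line} (applied to any non-real zero on $\Real(s) = \tfrac12$) gives $M_K - \tilde M_K = \Omega_\pm(x^{1/2}(\log x)^{m-1})$. Either way $M_K - \tilde M_K$ changes sign infinitely often, so along a subsequence where it has the same sign as the leading coefficient $c_{m_0,\Theta}$ of $\tilde M_K$, one gets $|M_K(x)| \geq |\tilde M_K(x)| \gg x^\Theta(\log x)^{m_0-1}$. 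This is exactly the combining step you describe at the end, but fed by the already-proven $\Omega_\pm$ propositions rather than a fresh Landau argument.
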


In particular, if $M_K(x)/\sqrt x$ is bounded (both above and below), then the Riemann hypothesis for $\zeta_K(s)$ holds, and $\zeta_K(s)$ has no multiple zero in the critical strip, thereby implying that $\zeta_K(\frac12)\neq 0$. We expand on the possibility of growth in only one direction in \cref{sec:one-sided-growth}.

\begin{cor}\label{cor:galois-fields} If $K/\QQ$ is Galois with non-abelian Galois group, then the \mertensconjecture over $K$ is false.
\end{cor}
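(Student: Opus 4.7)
The plan is to combine the $\Omega$-type results of \cref{thm:mertens-very-fail} with the existence of infinitely many multiple nontrivial zeros of $\zeta_K(s)$ in the non-abelian Galois setting, as recorded in \cite{hkms}. The underlying mechanism for the latter is Artin factorization:
\begin{equation*}
\zeta_K(s) = \prod_\chi L(s, \chi, K/\mathbb{Q})^{\chi(1)},
\end{equation*}
taken over irreducible characters $\chi$ of $\mathrm{Gal}(K/\mathbb{Q})$. A non-abelian finite group admits some irreducible $\chi$ with $\chi(1) \geq 2$, whence every zero of the corresponding Artin $L$-function contributes to $\zeta_K(s)$ with multiplicity at least $\chi(1) \geq 2$, producing infinitely many such multiple zeros in the critical strip.

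I would then dispatch the corollary by a case analysis on the Riemann hypothesis for $\zeta_K(s)$. If RH fails, then $\Theta > \tfrac{1}{2}$, and either $\Theta = \Theta'' > \Theta'$ or $\Theta = \Theta'$, so part (a) or part (b) of \cref{thm:mertens-very-fail} immediately yields $M_K(x) = \Omega(x^{\Theta - \epsilon})$ for sufficiently small $\epsilon > 0$; this growth rate strictly exceeds $\sqrt{x}$ and falsifies the \mertensconjecture over $K$.

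If RH holds, then every nontrivial zero of $\zeta_K(s)$ lies on the critical line $\Real(s) = \tfrac{1}{2}$, and only $s = \tfrac{1}{2}$ is real on that line. The single point $s = \tfrac{1}{2}$ contributes only a finite amount of multiplicity, so the infinite supply of multiple nontrivial zeros provided by \cite{hkms} forces infinitely many off the real axis; by conjugation symmetry we may fix one of the form $\rho = \tfrac{1}{2} + i\gamma$ with $\gamma > 0$ and multiplicity $m \geq 2$. Part (c) of \cref{thm:mertens-very-fail} then gives
\begin{equation*}
M_K(x) = \Omega\bigl(\sqrt{x}\,(\log x)^{m-1}\bigr),
\end{equation*}
which again surpasses $\sqrt{x}$ in order of magnitude and contradicts the conjecture.

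The main obstacle is the input from \cite{hkms} producing infinitely many multiple nontrivial zeros, which I would treat as a black box since it is nontrivial in its own right. Everything else is a routine dispatch, with the only minor bookkeeping being to verify that one of the multiple zeros can always be selected off the real axis so that part (c) applies with $m \geq 2$; this is what rules out the degenerate possibility that all the multiplicity is concentrated at the central point $s = \tfrac{1}{2}$.
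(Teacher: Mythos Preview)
Your proposal is correct and follows essentially the same approach as the paper: invoke the result from \cite{hkms} that $\zeta_K(s)$ has infinitely many multiple nontrivial zeros when $K/\QQ$ is non-abelian Galois, and feed this into \cref{thm:mertens-very-fail}. The paper's proof is a single sentence citing \cite{hkms}, relying on the remark immediately following \cref{thm:mertens-very-fail} that boundedness of $M_K(x)/\sqrt{x}$ forces RH and simplicity of all nontrivial zeros; you have simply unpacked that remark into an explicit case analysis on whether RH holds, and added the small observation that the infinitude of multiple zeros guarantees one off the real axis so that part (c) applies with $m\geq 2$.
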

\begin{proof}
It is known that the Dedekind zeta function of a non-abelian Galois extension $K$ of $\mathbb{Q}$ has infinitely many zeros of multiplicity $\geq 2$ in the critical strip (cf.\cite{hkms}).
\end{proof}

\subsection{The explicit formula for $M_K(x)$}
We now state the explicit formula for the Mertens function, which, in its compatibility with analytic methods, is essential to our applications. We must assume the Riemann hypothesis for $\zeta_K(s)$ and that it has no nontrivial multiple zeros. The error terms in our explicit formula are presented in different cases depending on their applicability to $x > 0$; the generality of $x>0$ is necessary for the unconditional results of \cref{sec:disproof}, while more specific bounds for $x>1$ are necessary to prove the existence of a limiting distribution in \cref{sec:limdist}.

\begin{prop}\label{lem:explicit-formula} Assume the Riemann hypothesis for $\zeta_K(s)$, and that all nontrivial zeros of $\zeta_K(s)$ are simple. For any real $x>0$ and any $T\in \mathcal T$, where $\mathcal T$ is as in \cref{lem:large-horiz-strip},
$$M_K(x)=\sum_{|\gamma|\leq T} \frac{x^\rho}{\rho\zeta_K'(\rho)}+\sum_{k=0}^\infty \res_{s=-k}\frac{x^s}{s\zeta_K(s)}+E(x,T),$$
where for any $x>0$, $E(x,T)=O_x(T^{1-\epsilon})$ for any $\epsilon$, and for $x > 1$ we have
$$E(x,T)\ll\frac{x^{1+C/\log \log x}\log x}T+\frac{x}{T^{1-\epsilon}}+x^{C/\log\log x},$$
where $C$ is as in \cref{lem:dedekind-coeff-bound}. In addition, for $x > 1$,
$$\sum_{k=0}^\infty \res_{s=-k}\frac{x^s}{s\zeta_K(s)}\ll (\log x)^{O(1)},$$
which in this case can be subsumed into the error term $E(x,T)$.
\end{prop}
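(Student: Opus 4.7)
The plan is to apply the truncated Perron formula to the Dirichlet series $1/\zeta_K(s)=\sum_n a_n'/n^s$ and then shift the contour leftward, picking up residues at the nontrivial zeros (simple and on the critical line by hypothesis) and at the trivial poles of $1/(s\zeta_K(s))$ at $s=0,-1,-2,\ldots$. With $c=1+1/\log x$ for $x>1$, Perron's formula yields
\[
M_K(x)=\frac{1}{2\pi i}\int_{c-iT}^{c+iT}\frac{x^s}{s\zeta_K(s)}\,ds + E_{\mathrm{Per}}(x,T),
\]
and the coefficient bound $|a_n'|\leq a_n\ll n^{C/\log\log n}$ from \cref{lem:dedekind-coeff-bound} produces $E_{\mathrm{Per}}(x,T)\ll x^{1+C/\log\log x}(\log x)/T + x^{C/\log\log x}$ for $x>1$ (with the diagonal $n\approx x$ handled via the trivial coefficient bound and the tail via $\sum|a_n'|/n^c$ against $x^c/T$). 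For $0<x\leq 1$, $M_K(x)$ is a finite sum and any fixed $c>1$ suffices, yielding the weaker $O_x(T^{1-\epsilon})$ form.

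Next, I would shift the contour to the rectangle with left edge $[-U-iT,\,-U+iT]$ for a large parameter $U$. Inside one encounters simple residues at each nontrivial zero $\rho=\tfrac12+i\gamma$ with $|\gamma|\leq T$, each contributing $x^\rho/(\rho\zeta_K'(\rho))$, together with residues of order at most $n_K$ at $s=0,-1,\ldots,-\lfloor U\rfloor$. On the horizontal segments from $c\pm iT$ to $-1\pm iT$, \cref{lem:large-horiz-strip} (applicable since $T\in\mathcal{T}$) gives $|1/\zeta_K(\sigma\pm iT)|\ll T^\epsilon$ uniformly in $\sigma\in[-1,2]$, so those contributions are $\ll xT^{\epsilon-1}$ for $x>1$. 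The horizontal portions in $\sigma<-1$ and the left vertical segment are then controlled via the functional equation in \cref{thm:fe}: expressing $1/\zeta_K(s)$ through $1/\zeta_K(1-s)$ (bounded since $\Real(1-s)=1+U$ is far to the right of all zeros) and gamma factors of modulus $\asymp|s|^{-Un_K}$ for large $U$, the factor $|x^s|=x^{-U}$ drives the left vertical integral to zero as $U\to\infty$, and the residue sum over $s=-k$, $k\geq 0$, converges in the limit. For $x>1$, each such residue has the form $x^{-k}P_k(\log x)$ with $\deg P_k=O(1)$, so summing in $k$ yields $\ll(\log x)^{O(1)}$, absorbable into $E(x,T)$.

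The most delicate point is to arrange the estimates so that $E(x,T)$ comes out exactly as claimed. \cref{lem:large-horiz-strip} is valid only on the discrete set $\mathcal{T}$ and only for $-1\leq\sigma\leq 2$, so one must verify that extending the contour through $\sigma<-1$ contributes a subdominant error, and that the choice $c=1+1/\log x$ allows the Perron tail and the horizontal integral to combine into the stated $x^{1+C/\log\log x}(\log x)/T+x/T^{1-\epsilon}$ without an extra logarithmic loss. Handling $0<x\leq 1$ requires separate bookkeeping but is essentially trivial since $M_K(x)$ is then a small finite sum, which only needs to be matched against the $x$-dependent constant absorbed into $O_x(T^{1-\epsilon})$.
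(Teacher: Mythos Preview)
Your proposal is correct and follows essentially the same route as the paper: truncated Perron with $c=1+1/\log x$ and the coefficient bound of \cref{lem:dedekind-coeff-bound}, followed by a contour shift to $\Real(s)=-U$, splitting the horizontal segments at $\sigma=-1$ so that \cref{lem:large-horiz-strip} handles the near-critical portion while the functional equation (\cref{thm:fe}) controls the far-left pieces and the vertical edge as $U\to\infty$. The paper packages the contour-shift estimate as a separate lemma (\cref{lem:integral-bound-for-ef}) and invokes \cref{lem:triv-zero-bound} for the residue sum, but the substance is exactly what you outline.
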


We defer the proof of this formula to \cref{sec:details}. 

For the previous result and others to follow, we also require the following bound on the residues of $\frac{x^s}{s\zeta_K(s)}$ at the trivial zeros $s = -k$. The purpose of this bound is two-fold: it reveals dependence on $k$ and $x$, which suffices already to prove the last statement in \cref{lem:explicit-formula}, and it clarifies dependence on $r_1$, $r_2$, and $D_K$, which is crucial in our proof of \cref{thm:mertens-genfields}.

\begin{lem}\label{lem:triv-zero-bound} Fix non-negative integers $r_1$ and $r_2$, and let $K$ be a number field with $r_1$ real embeddings and $r_2$ complex conjugate pairs of embeddings.  Then there exists some constant $c=c_{r_1,2r_2}$ such that, if $\zeta_K(s)$ has a zero at $s=-k$~for a positive integer $k$, then
$$\left|\res_{s=-k}\frac{x^s}{s\zeta_K(s)}\right|\leq \frac{c(2\pi)^{kn_K}(\log (kx\adisc_K))^{r_1 + r_2}}{kx^k\adisc_K^{k+1/2}(k!\!)^{n_K}}.$$
In addition,
$$\left|\res_{s=0}\frac{x^s}{s\zeta_K(s)}\right|\ll_K (\log x)^{r_1+r_2-1}.$$
\end{lem}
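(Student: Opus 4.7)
The strategy is to apply the functional equation (\cref{thm:fe}) after the substitution $s \mapsto 1-s$, which gives
\begin{equation*}
\frac{1}{\zeta_K(s)} = \frac{F(s)}{\zeta_K(1-s)}, \qquad F(s) = \left(\frac{\adisc_K}{(2\pi)^{n_K}}\right)^{s-\frac{1}{2}}\left(\frac{\pi}{2}\right)^{\frac{r_1}{2}}\frac{\Gamma(s)^{r_2}}{(\sin\frac{\pi s}{2})^{r_1}\Gamma(1-s)^{r_1+r_2}}.
\end{equation*}
Near $s=-k$ for $k\geq 1$ the factor $\zeta_K(1+k)$ is harmlessly sandwiched between $1$ and $\zeta(2)^{n_K}$ (by the Euler product and \cref{lem:dedekind-coeff-bound}), so all polar behavior resides in $F(s)$. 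Inspecting factors, $\Gamma(s)^{r_2}$ contributes a pole of order $r_2$, $(\sin\frac{\pi s}{2})^{-r_1}$ contributes a pole of order $r_1$ exactly when $k$ is even, and $\Gamma(1-s)^{-(r_1+r_2)}$ is analytic and nonzero. Thus $F$ has a pole at $-k$ of order $m=r_2$ for $k$ odd and $m=r_1+r_2$ for $k$ even. Using the Laurent expansions $\Gamma(s)\sim (-1)^k/(k!(s+k))$ and $\sin(\pi s/2)\sim (-1)^{k/2}\pi(s+k)/2$ (for $k$ even), the coefficient of $(s+k)^{-m}$ in $F(s)$ has magnitude of order $(2\pi)^{n_K(k+1/2)}/(\adisc_K^{k+1/2}(k!)^{n_K})$ times an $(r_1,r_2)$-dependent constant, reproducing the advertised powers of $(2\pi)^{n_Kk}$, $\adisc_K$, and $k!$ once the factor $(2\pi)^{n_K/2}$ is absorbed into $c_{r_1,2r_2}$.

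To complete the residue computation I would use
\begin{equation*}
\res_{s=-k}\frac{x^sF(s)}{s\,\zeta_K(1-s)} = \frac{1}{(m-1)!}\frac{d^{m-1}}{ds^{m-1}}\bigg|_{s=-k}\left[(s+k)^m\cdot\frac{x^sF(s)}{s\,\zeta_K(1-s)}\right]
\end{equation*}
and expand by Leibniz. A derivative acting on $x^s$ contributes $\log x$; on $(\adisc_K/(2\pi)^{n_K})^{s-1/2}$ contributes $\log\adisc_K - n_K\log(2\pi)$; on a $\Gamma$ factor contributes a $\psi$-value of size $O(\log k)$ at first order and $O(1)$ thereafter; while $1/s$ and $1/\zeta_K(1-s)$ contribute $O(1/k)$ and $O(1)$ respectively. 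Each Leibniz term of total order at most $m-1\leq r_1+r_2$ is therefore bounded by an $(r_1,r_2)$-dependent constant times $(\log(kx\adisc_K))^{r_1+r_2}$. Pairing with the $1/k$ from $|1/s|$ at $-k$ and the leading coefficient above yields the claimed bound. For the residue at $s=0$ the argument is analogous but simpler: since $\zeta_K$ vanishes to order $r_1+r_2-1$ at $0$, the residue equals $\frac{1}{(r_1+r_2-1)!}$ times the $(r_1+r_2-1)$st derivative of $e^{s\log x}\cdot s^{r_1+r_2-1}/\zeta_K(s)$ at $s=0$, and the top power $(\log x)^{r_1+r_2-1}$ arises when every derivative lands on $e^{s\log x}$, with all other contributions absorbed into the $K$-dependent implicit constant.

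The main technical obstacle is verifying that, in the Leibniz expansion, the Taylor coefficients of the analytic auxiliary factor grow only polylogarithmically in $k$ rather than polynomially. Writing $\Gamma(-k+z) = (-1)^k\Gamma(1+z)/(k!\,z\prod_{j=1}^k(1-z/j))$, the factor requiring attention is $1/\prod_{j=1}^k(1-z/j)^{n_K}$; a naive Cauchy estimate on $|z|=\tfrac{1}{2}$ would give growth $k^{n_K/2}$ via the Wallis product $\prod_{j=1}^k(1-1/(2j))\sim 1/\sqrt{\pi k}$, which is too weak. The fix is a direct Taylor expansion of the logarithm, $\log\prod_{j=1}^k(1-z/j)^{-n_K} = n_K\sum_{\ell\geq 1}H_k^{(\ell)}z^\ell/\ell$, where $H_k\ll \log k$ and $H_k^{(\ell)}\leq \zeta(\ell)$ for $\ell\geq 2$; composition with the exponential then shows that each Taylor coefficient is bounded by a polynomial of bounded degree in $\log k$, as required to deliver the $(\log(kx\adisc_K))^{r_1+r_2}$ factor without spurious polynomial growth in $k$.
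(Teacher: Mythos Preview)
Your argument is correct and follows essentially the same route as the paper: apply the functional equation, regularize the pole at $s=-k$, and control the residue by bounding the logarithmic derivatives of the individual factors (the paper does this globally, writing $f^{(j)}/f$ as a polynomial in $g',g'',\ldots,g^{(j)}$ for $g=\log f$, rather than factor-by-factor via Leibniz). The one simplification you miss is that the paper first applies the reflection formula $\Gamma(s)\Gamma(1-s)=\pi/\sin\pi s$ to trade $\Gamma(s)^{r_2}$ for $(\sin\pi s)^{-r_2}\Gamma(1-s)^{-r_2}$, which dissolves your ``main technical obstacle'' with $\Gamma(-k+z)$ entirely---after regularization the $\sin$ factors have $k$-independent Taylor expansions at $s=-k$, and only $\log\Gamma(1-s)$ carries the $O(\log k)$ growth via the digamma function.
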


See \cref{sec:details} for the proof. Heavier consideration of the residue at $s = 0$ is necessary to~prove Theorems \ref{thm:mertens-quadfields} and \ref{thm:mertens-genfields}, wherein dependence on $n_K$ and $D_K$ will be significant.

\section{Unconditional failure of the \mertensconjecture}\label{sec:disproof}

In light of \cref{thm:mertens-very-fail}, if $x^{-1/2}|M_K(x)|$ is bounded, then $\zeta_K(s)$ has no zeros off the~line $\Real(s) = \frac{1}{2}$ and has only simple zeros. In this section, we describe some consequences of these assumptions, which lead to unconditional disproofs of the \mertensconjecture for ``most'' number fields. 

\subsection{The general result} In this section, we give a generic result about the limiting~behavior of $x^{-1/2}M_K(x)$ which holds unconditionally. We follow a method of Jurkat from \cite{jurkat1973mertens}, using his main theorem directly in our application. In particular, Jurkat has identified a wide class of trigonometric series which are seen to share a key limiting property enjoyed by Bohr-almost periodic functions. By exploiting this property, we show $x^{-1/2}M_K(x)$ approaches certain explicitly computable values infinitely often and arbitrarily closely.

\begin{defn}[cf.~\protect{\cite{jurkat1973mertens}}]\label{def:apd} 
A locally integrable function $f:\RR\to\RR$ is called \emph{almost periodic in a distributional sense} (APD) if there is a Bohr-almost periodic function $g$ so that $f = \frac{d^k}{d^kx}g$. 

Formally, this means $f$ admits an expansion of the form
\begin{equation}\label{eq:APD}
    f(x) \sim \sum_{n=1}^\infty \Real(a_n e^{i \lambda_n x})
\end{equation}
where $0 < \lambda_n \nearrow \infty$ and $a_n$ are complex constants such that
\begin{equation*}
    \sum_{n=1}^\infty \frac{|a_n|}{\lambda_n^k} < \infty
\end{equation*}
for some $k \in \mathbb{N}$. When referring to an APD-function, we shall specify $k$.
\end{defn}

The notion of almost periodicity in a distributional sense is useful because such functions have easily-calculable limit points, as shown by the following main theorem of Jurkat.

\begin{prop}[Jurkat~\protect{\cite[pp.~151--152]{jurkat1973mertens}}]\label{thm:apd-functions} Suppose $f:\RR\to\RR$ is an APD-function, and let $L(f)$ denote the set of Lebesgue points of $f$. Then for any $t\in L(f)$,
$$\liminf_{\substack{x\to\infty\\x\in L(f)}}f(x)\leq f(t)\leq \limsup_{\substack{x\to\infty\\x\in L(f)}}f(x);$$
Moreover, all values of $f$ at Lebesgue points are limit points of $f$. 
\end{prop}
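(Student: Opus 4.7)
The plan is to leverage the structural decomposition $f = g^{(k)}$ (with $g$ Bohr almost periodic) to smooth $f$ into a genuinely almost periodic function, to invoke classical Bohr theory on the smoothed function, and then to transfer the information back to $f$ at Lebesgue points.

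First I would fix a smooth nonnegative bump $\psi$ supported in $[-1,1]$ with $\int \psi = 1$, and set $\psi_\eta(x) := \eta^{-1} \psi(x/\eta)$. Define $f_\eta := f * \psi_\eta$. Commuting the distributional derivative with the convolution gives $f_\eta = g * \psi_\eta^{(k)}$. Since $\psi_\eta^{(k)} \in L^1(\mathbb{R})$ and $g$ is bounded and Bohr almost periodic, the convolution $f_\eta$ is smooth, uniformly continuous, and itself Bohr almost periodic --- the space of such functions is closed under convolution with $L^1$ kernels. Moreover, by the Lebesgue differentiation theorem applied with the kernel $\psi_\eta$, for each $t \in L(f)$ one has $f_\eta(t) \to f(t)$ as $\eta \to 0^+$.

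For the main construction, given $t \in L(f)$, $\epsilon > 0$, and $N > 0$, I would first pick $\eta$ small enough that $|f_\eta(t) - f(t)| < \epsilon/3$. Classical Bohr theory then furnishes an $(\epsilon/3)$-almost-period $\tau > N$ of $f_\eta$, meaning $\sup_x |f_\eta(x+\tau) - f_\eta(x)| < \epsilon/3$. Chaining these inequalities yields $|f_\eta(t+\tau) - f(t)| < 2\epsilon/3$, and by uniform continuity of $f_\eta$ a comparable bound persists on a small neighborhood $I$ of $t + \tau$, on which almost every point is a Lebesgue point of $f$.

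The hard part will be producing an actual Lebesgue point $x \in I$ of $f$ with $|f(x) - f_\eta(x)|$ small, since fixing $\eta$ and varying $x$ runs against the Lebesgue differentiation regime (which fixes $x$ and takes $\eta \to 0$). Two viable strategies suggest themselves: (i) an Egorov-type argument showing that on a set of near-full measure in each bounded interval, the convergence $f_\eta \to f$ is uniform as $\eta \to 0$; or (ii) a diagonal construction with $\eta_n \to 0$ and $\tau_n \to \infty$ chosen in concert, exploiting the summability $\sum |a_n|/\lambda_n^k < \infty$ from the distributional Fourier expansion \eqref{eq:APD} to quantitatively control the oscillation of $f$ about $f_{\eta_n}$ near $t + \tau_n$. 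Once this gap is closed, the limit-point statement follows directly, and the sandwich inequalities $\liminf_{x \in L(f), x \to \infty} f(x) \leq f(t) \leq \limsup_{x \in L(f), x \to \infty} f(x)$ at Lebesgue points are an immediate consequence.
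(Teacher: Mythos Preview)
The paper does not prove this proposition; it is quoted directly from Jurkat~\cite[pp.~151--152]{jurkat1973mertens} and used as a black box. There is therefore no ``paper's own proof'' to compare your attempt against.

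That said, your sketch follows a natural line --- mollify $f=g^{(k)}$ into a genuine Bohr almost periodic function $f_\eta$, pull an almost-period $\tau$ far to the right, then recover a Lebesgue point of $f$ near $t+\tau$ --- and this is close in spirit to what Jurkat actually does. You correctly flag the genuine difficulty: having fixed $\eta$ to get $|f_\eta(t)-f(t)|$ small, you must now find a Lebesgue point $x$ near $t+\tau$ where $|f(x)-f_\eta(x)|$ is also small, and Lebesgue differentiation gives no uniformity in $x$ at a single scale $\eta$. Your option~(i) (Egorov) does not obviously work, since Egorov controls convergence on a fixed bounded interval while the target interval $I$ around $t+\tau$ moves with $N\to\infty$; you would need some translation-invariance of the good set, which is essentially the almost-periodicity you are trying to establish. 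Option~(ii) is closer to what is needed: one can average $|f-f_\eta|$ over the interval $I$ and bound this in terms of the tail $\sum_{\lambda_n>\eta^{-1}} |a_n|\lambda_n^{-k}\cdot\lambda_n^{k}$ via the Fourier expansion, then choose $\eta$ so that this average is small and pick $x\in I\cap L(f)$ where $|f(x)-f_\eta(x)|$ is at most a fixed multiple of the average. As written, your proposal is an outline with the crux left open rather than a proof.
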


The technical condition on Lebesgue points can be safely ignored for the remainder of this paper. The functions we work with are sufficiently well-behaved that the inclusion of their Lebesgue points does not change either limit.

For any real $x > 0$, define
\begin{equation}\label{eq:mertens-star-def}
M_K^*(x)= - \sum_{k=0}^\infty \res_{s=-k}\frac{x^s}{s\zeta_K(s)}.
\end{equation}
(We remark that this carries a slightly different normalization than Jurkat's $M^*(x)$, as he makes a few simplifications particular to the case $K=\QQ$.) Motivated by \cref{lem:explicit-formula}, this is a ``trivial completion'' of $M_K(x)$, in the sense that, for $T\in\mathcal T$ (where $\mathcal T$ is given by \cref{lem:large-horiz-strip}),
\begin{equation}\label{eq:completed-explicit}
M_K(x)+M_K^*(x)=\sum_{|\gamma|\leq T}\frac{x^\rho}{\rho\zeta_K'(\rho)}+O_x\left(\frac1{T^{1-\epsilon}}\right)
\end{equation}
for all $x>0$, assuming the preconditions of \cref{lem:explicit-formula} hold. We first show an analogue of Jurkat's main theorem for number fields under these assumptions, as well as some more stringent assumptions.

The function $f: \mathbb{R} \rightarrow \mathbb{R}$ defined by
\begin{equation*}
    f(y) \coloneqq e^{-y/2}(M_K(e^y) + M_K^*(e^y))
\end{equation*}
is locally integrable. If we assume the Riemann hypothesis for $\zeta_K(s)$ and that all nontrivial zeros are simple, and let $T \rightarrow \infty$ along $\mathcal{T}$ in the equation \eqref{eq:completed-explicit} above (with the necessary modifications for $M_K(e^y)$ if $y$ is the logarithm of an integer), it furthermore has an expansion of the form in \eqref{eq:APD} with $a_n = 2(\rho_n \zeta_K'(\rho_n))^{-1}$, $\lambda_n = \gamma_n$,
where $\gamma_n$ is the ordinate of the $n$th highest nontrivial zero $\rho_n = \frac{1}{2} + i \gamma_n$ in the upper half plane. Here, the sum converges boundedly given this grouping of the terms. Assuming for now that there exists some constant $C > 0$ such that
\begin{equation*}
    \frac{1}{|\rho \zeta_K'(\rho)|} \leq C,
\end{equation*}
the result of \cref{cor:zero-count} implies that $f$ is an APD-function with $k = 2$. Hence, \cref{thm:apd-functions} produces the following.


\begin{prop}\label{thm:m-star-limit-val} Let $K$ be a number field. Assume the Riemann hypothesis for $\zeta_K(s)$, that all nontrivial zeros of $\zeta_K(s)$ are simple, and that there exists some constant $C > 0$ for which
$$\frac{1}{|\rho\zeta_K'(\rho)|}\leq C$$
for every nontrivial zero $\rho$ of $\zeta_K(s)$. Then, for any $x>0$,
$$\frac{M_K(x)+M_K^*(x)}{x^{1/2}}$$
is a limit point of the function $x^{-1/2}M_K(x)$.
\end{prop}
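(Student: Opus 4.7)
The plan is to apply Jurkat's theorem, \cref{thm:apd-functions}, to the function
\[
f(y) \coloneqq e^{-y/2}\bigl(M_K(e^y) + M_K^*(e^y)\bigr),
\]
since the discussion preceding the proposition already verifies that $f$ is APD with $k=2$ under our hypotheses. Indeed, combining the explicit formula of \cref{lem:explicit-formula} with the completion identity \eqref{eq:completed-explicit} and letting $T \to \infty$ along the sequence $\mathcal{T}$ from \cref{lem:large-horiz-strip}, one obtains a representation $f(y) \sim \sum_n \Real(a_n e^{i\gamma_n y})$ with $a_n = 2/(\rho_n\zeta_K'(\rho_n))$ and $\lambda_n = \gamma_n$; the uniform bound $|1/(\rho\zeta_K'(\rho))| \leq C$ combined with the zero-count \cref{cor:zero-count} gives $\sum |a_n|/\gamma_n^2 < \infty$, so that \cref{def:apd} is satisfied with $k=2$.

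Given this APD structure, \cref{thm:apd-functions} applies directly: for any Lebesgue point $t \in L(f)$, the value $f(t)$ lies between $\liminf_{y \to \infty,\,y \in L(f)} f(y)$ and $\limsup_{y \to \infty,\,y \in L(f)} f(y)$, and is a limit point of $f$ along $L(f)$. As the excerpt notes, the Lebesgue-point technicality causes no loss here, since $f$ is sufficiently regular that these one-sided extrema agree with the corresponding quantities for $f$ on all of $[0,\infty)$.

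Next I would transfer this from $f$ to the function we actually care about, namely $y \mapsto e^{-y/2} M_K(e^y)$. By the last assertion of \cref{lem:explicit-formula} (equivalently, by \cref{lem:triv-zero-bound}), we have $M_K^*(x) = -\sum_{k \geq 0}\res_{s=-k}\frac{x^s}{s\zeta_K(s)} \ll (\log x)^{O(1)}$ for $x > 1$, so that $e^{-y/2} M_K^*(e^y) \to 0$ as $y \to \infty$. Consequently, the set of limit points of $f(y)$ coincides with that of $e^{-y/2} M_K(e^y) = (e^y)^{-1/2} M_K(e^y)$. Reparameterising by $y = \log x$ and reading off $f(\log x) = x^{-1/2}(M_K(x) + M_K^*(x))$ then yields the claim.

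The main obstacle is entirely absorbed into the preparatory discussion: once one is willing to invoke Jurkat's theorem as a black box, the real technical content lies in justifying that the truncated explicit formula of \cref{lem:explicit-formula} can be promoted to a genuine APD expansion of the form \eqref{eq:APD} with a convergent tail bound $\sum |a_n|/\gamma_n^2 < \infty$. This is precisely where the hypotheses of the Riemann hypothesis, simplicity of zeros, and the uniform lower bound on $|\rho\zeta_K'(\rho)|$ are each consumed; the remainder of the proof is a direct application of \cref{thm:apd-functions} together with the negligibility of $M_K^*(x)/x^{1/2}$.
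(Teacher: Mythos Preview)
Your proposal is correct and follows essentially the same approach as the paper: verify that $f(y)=e^{-y/2}(M_K(e^y)+M_K^*(e^y))$ is APD with $k=2$ via the explicit formula, the uniform bound on $|\rho\zeta_K'(\rho)|^{-1}$, and \cref{cor:zero-count}, then invoke Jurkat's \cref{thm:apd-functions}. You make the transfer step from limit points of $f$ to limit points of $x^{-1/2}M_K(x)$ (using $x^{-1/2}M_K^*(x)\to 0$) more explicit than the paper does, but this is exactly the intended argument.
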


We now describe our major application of this result. Recall the definition
$$M_K^-=\liminf_{x\to\infty}\frac{M_K(x)}{x^{1/2}}, \quad M_K^+=\limsup_{x\to\infty}\frac{M_K(x)}{x^{1/2}}.$$

\begin{lem}\label{lem:when-failure} Let $K$ be any number field, and let $M_K^*(x)$ be as in \eqref{eq:mertens-star-def}.
\begin{enumerate}[label=(\alph*),font=\normalfont]
    \item[(a)] Unconditionally, $M_K^+-M_K^-\geq 1$.
    \item[(b)] If $M_K^*(1)>0$ (resp.~$M_K^*(1) < -1$), then the \mertensconjecture for $K$ fails infinitely often; in particular, $M_K^+>1$ (resp.~$M_K^- < -1$).
\end{enumerate}
\end{lem}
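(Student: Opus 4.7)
The plan is to deduce both parts from \cref{thm:m-star-limit-val}, by exploiting the unit-ideal jump of $M_K$ at $x = 1$.

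For part (a), if either $M_K^+ = +\infty$ or $M_K^- = -\infty$ then $M_K^+ - M_K^- \geq 1$ holds trivially, so I would assume that $M_K(x)/\sqrt{x}$ is bounded. Under this assumption, \cref{thm:mertens-very-fail} (in contrapositive) tells us that $\zeta_K(s)$ satisfies the Riemann hypothesis and that all nontrivial zeros are simple, placing us in the setting of \cref{thm:m-star-limit-val}. That proposition asserts that for every $x_0 > 0$ the value $(M_K(x_0) + M_K^*(x_0))/\sqrt{x_0}$ is a limit point of $x^{-1/2}M_K(x)$ at infinity, so it lies in $[M_K^-, M_K^+]$.

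The next step is to evaluate $M_K + M_K^*$ on either side of $x_0 = 1$. Since $M_K^*(x)$ is a convergent sum of residues of $x^s/(s\zeta_K(s))$, each of the form $x^{-k}$ times a polynomial in $\log x$, the function $M_K^*$ is $C^\infty$ on $(0,\infty)$; every discontinuity of $M_K + M_K^*$ at $x_0 = 1$ therefore comes from $M_K$. Because $(1) = \mathcal{O}_K$ is the only integral ideal of norm $\leq 1$, we have $M_K(x) = 0$ on $[0,1)$ and $M_K(x) = 1$ on $(1,2)$, giving
\[
\lim_{x_0 \to 1^-} \frac{M_K(x_0)+M_K^*(x_0)}{\sqrt{x_0}} = M_K^*(1), \qquad \lim_{x_0 \to 1^+} \frac{M_K(x_0)+M_K^*(x_0)}{\sqrt{x_0}} = 1 + M_K^*(1).
\]
Since $[M_K^-, M_K^+]$ is closed and contains each one-sided value for $x_0$ near $1$, it contains both limits. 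Hence $M_K^- \leq M_K^*(1)$ and $1+M_K^*(1) \leq M_K^+$, which immediately yields (a) via $M_K^+ - M_K^- \geq (1+M_K^*(1)) - M_K^*(1) = 1$, as well as (b): if $M_K^*(1) > 0$ then $M_K^+ \geq 1+M_K^*(1) > 1$, and if $M_K^*(1) < -1$ then $M_K^- \leq M_K^*(1) < -1$.

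The main technical obstacle is the auxiliary hypothesis of \cref{thm:m-star-limit-val} that $1/|\rho\zeta_K'(\rho)| \leq C$ holds uniformly over nontrivial zeros, which is not automatic from boundedness of $M_K(x)/\sqrt{x}$ alone. I would address this by verifying that under boundedness of $M_K/\sqrt{x}$ the function $e^{-y/2}(M_K(e^y) + M_K^*(e^y))$ still admits an APD expansion (\cref{def:apd}) with a possibly larger exponent $k$ --- using a weaker polynomial or averaged bound on $1/|\rho\zeta_K'(\rho)|$ in place of the uniform one --- so that \cref{thm:apd-functions} still supplies the limit-point conclusion above. Failing that, one interprets ``unconditional'' in (a) as subject to the same ambient hypothesis, which is what drives the disproofs in \cref{sec:disproof} anyway.
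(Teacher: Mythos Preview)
Your overall structure matches the paper's proof exactly: reduce to the case where $M_K^\pm$ are both finite, deduce RH and simplicity via \cref{thm:mertens-very-fail}, invoke \cref{thm:m-star-limit-val}, and read off the inequalities from the one-sided limits at $x=1$. The limit computation $M_K^-\leq M_K^*(1)<M_K^*(1)+1\leq M_K^+$ is precisely what the paper writes.

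The one genuine gap is exactly where you flag it: verifying the uniform bound $1/|\rho\zeta_K'(\rho)|\leq C$ needed as a hypothesis of \cref{thm:m-star-limit-val}. Your proposed workarounds (a larger APD exponent $k$ via some averaged bound, or reinterpreting ``unconditional'') are unnecessary and the second would weaken the lemma. The paper closes this gap cleanly by appealing to \cref{prop:zero-on-rightmost-line} with $\Theta''=\tfrac12$ and $m=1$ (equivalently \cref{cor:RH-multiple-zero}): for every nontrivial zero $\rho$ one has
\[
M_K^+\;\geq\;\frac{1}{|\rho\zeta_K'(\rho)|}\qquad\text{and}\qquad -M_K^-\;\geq\;\frac{1}{|\rho\zeta_K'(\rho)|}.
\]
Hence if both $M_K^+$ and $M_K^-$ are finite, the constant $C=\max(M_K^+,-M_K^-)$ bounds $1/|\rho\zeta_K'(\rho)|$ uniformly over all nontrivial zeros, and the hypotheses of \cref{thm:m-star-limit-val} are met unconditionally. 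With this one line inserted, your argument is complete and identical to the paper's.
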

\begin{proof} If the Riemann hypothesis for $\zeta_K(s)$ is false, or $\zeta_K(s)$ has a nontrivial zero of multiplicity greater than $1$, then \cref{thm:mertens-very-fail} implies that either $M_K^+=\infty$ or $M_K^-=-\infty$, in which case both (a) and (b) follow. By the sign of the functional equation for $\zeta_K(s)$ in \cref{thm:fe}, any zero at $s=1/2$ must be of even order, and thus any $K$ with $\zeta_K(1/2)=0$ is covered in the previous cases.

In the case where the Riemann hypothesis holds and all nontrivial zeros are simple, we may apply \cref{prop:zero-on-rightmost-line} with $\Theta''=1/2$ and $m=1$ to obtain
$$M_K^+,-M_K^-\geq \frac1{|\rho\zeta_K'(\rho)|}$$
for any nontrivial zero $\rho$ of $\zeta_K(s)$. If one of $M_K^\pm$ is infinite, then (a) and (b) hold as before. Otherwise, if $M_K^\pm$ are both finite, then there is some constant $C$ for which
$$\frac1{|\rho\zeta_K'(\rho)|}\leq C$$
for every zero $\rho$ of $\zeta_K(s)$. So, the preconditions of \cref{thm:m-star-limit-val} hold, and we have for any $x$ that
$$\frac{M_K(x)+M_K^*(x)}{x^{1/2}}$$
is between $M_K^-$ and $M_K^+$. In particular,
$$M_K^-\leq \lim_{x\to 1^-}\frac{M_K(x)+M_K^*(x)}{x^{1/2}}=M_K^*(1)<M_K^*(1)+1=\lim_{x\to 1^+}\frac{M_K(x)+M_K^*(x)}{x^{1/2}}\leq M_K^+,$$
where limits to $1^-$ and $1^+$ denote approaching $1$ from below and above, respectively. This inequality chain shows both (a) and (b).
\end{proof}

\subsection{Imaginary quadratic fields}
In this subsection, we calculate and bound $M_K^*(x)$ explicitly for imaginary quadratic fields. We pick $D>0$ such that $-D$ is a negative fundamental~discriminant and let $K=\QQ(\sqrt{-D})$ be the imaginary quadratic field with discriminant $-D$. Let $\chi_{-D}$ denote the odd quadratic character $\chi_{-D}(n)=\left(\frac{-D}n\right)$ of conductor $D$, so that $\zeta_K(s)=\zeta(s)L(s,\chi_{-D})$. Using \cref{thm:fe}, we may compute
\begin{equation}\label{eq:m-star-im-quad}
    M_K^*(x) \coloneqq \frac{2\pi}{L(1,\chi_{-D})D^{1/2}} + \sum_{k=1}^\infty \left( \frac{D}{4\pi^2} \right)^{-k - \frac{1}{2}}\frac{(-1)^k x^{-k}}{k (k!\!)^2\zeta_K(k+1)}.
\end{equation}
Via the class number formula of Dirichlet, the leading term of \eqref{eq:m-star-im-quad} is always positive. We show that, for $x = 1$ and sufficiently large $D$, it exceeds the other terms in absolute value. 
\begin{lem}\label{lem:M_K-star(1):imaginary}
If $D>0$ and $-D$ is a fundamental discriminant, and $K=\QQ(\sqrt{-D})$, then
$$M_K^*(1)\geq \frac{2\pi}{D^{1/2}}\left(\frac1{\frac12\log D+\log\log D+2+\log 2}-\exp\left(\frac{4\pi^2}D\right)+1\right).$$
In particular, if $D>307$, then $M_K^*(1)>0$.
\end{lem}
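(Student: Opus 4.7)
The plan is to work directly from the explicit expansion \eqref{eq:m-star-im-quad} at $x=1$,
\[
 M_K^*(1) = \frac{2\pi}{L(1,\chi_{-D})D^{1/2}} + \sum_{k=1}^\infty \left(\frac{4\pi^2}{D}\right)^{k+\frac{1}{2}}\frac{(-1)^k}{k\,(k!)^2\,\zeta_K(k+1)},
\]
bounding below the leading term and bounding above the absolute value of the tail. Factoring out $2\pi/D^{1/2}$ from both pieces will produce the claimed shape.

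For the tail, I would observe that $\zeta_K(k+1)\geq 1$ (it is an Euler product over $\mathfrak p$ of factors $\geq 1$ whenever the argument exceeds $1$) and $k(k!)^2\geq k!$ for $k\geq 1$, so the triangle inequality yields
\[
 \left|\sum_{k=1}^\infty \Bigl(\tfrac{4\pi^2}{D}\Bigr)^{k+\frac12}\frac{(-1)^k}{k(k!)^2\zeta_K(k+1)}\right|
 \leq \Bigl(\tfrac{4\pi^2}{D}\Bigr)^{1/2}\sum_{k=1}^\infty \frac{(4\pi^2/D)^k}{k!}
 =\Bigl(\tfrac{4\pi^2}{D}\Bigr)^{1/2}\!\bigl(e^{4\pi^2/D}-1\bigr).
\]
This contributes exactly the $-\exp(4\pi^2/D)+1$ term inside the parenthesis after pulling out $2\pi/D^{1/2}$.

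For the main term, I would invoke a standard explicit upper bound of the form
\[
 L(1,\chi_{-D}) \leq \tfrac12\log D + \log\log D + 2 + \log 2,
\]
which is known for primitive real characters of conductor $D$ (it can be derived by partial summation combined with Pólya--Vinogradov, splitting the tail at $\log D$, or cited from explicit work of Ramaré/Louboutin). Inverting this bound produces the first term $1/(\tfrac12\log D+\log\log D+2+\log 2)$ and finishes the displayed inequality.

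For the concluding statement that $M_K^*(1) > 0$ once $D > 307$, the task reduces to showing positivity of
\[
 g(D) := \frac{1}{\tfrac12\log D+\log\log D+2+\log 2}-e^{4\pi^2/D}+1.
\]
For large $D$ the first term is $\sim 2/\log D$ while $e^{4\pi^2/D}-1\sim 4\pi^2/D$, so $g(D)>0$ holds eventually; the sharp threshold at $D=307$ is a delicate numerical check (both sides are $\approx 0.137$ there), so I would verify positivity at $D=308$ directly and then argue that for $D\geq 308$ the function $g$ remains positive — for instance by bounding $e^{4\pi^2/D}-1 \leq 4\pi^2/D + (4\pi^2/D)^2$ and showing this is dominated by $1/(\tfrac12\log D+\log\log D+2+\log 2)$ for all $D\geq 308$, which reduces to an elementary monotonicity check. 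The main obstacle is exactly this last step: the estimate is nearly tight at the boundary, so care is needed in the explicit comparison, but no essentially new ideas are required.
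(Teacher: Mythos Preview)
Your proposal is correct and follows essentially the same approach as the paper: the tail is bounded identically via $\zeta_K(k+1)\geq 1$ and $k(k!)^2\geq k!$, and the main term via the same upper bound on $L(1,\chi_{-D})$. The only difference is that the paper derives that $L(1,\chi_{-D})$ bound in-line (partial summation plus P\'olya--Vinogradov with cutoff $N=2\sqrt{D}\log D$) rather than citing it, and leaves the $D>307$ check implicit as a direct numerical verification of the displayed inequality.
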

\begin{proof} We begin by bounding from below the leading term of \eqref{eq:m-star-im-quad}. The Euler product for $L(s,\chi_{-D})$ implies that it is positive for all real $s > 1$, and hence $L(1,\chi_{-D}) > 0$, knowing that it is nonzero. On the other hand,
$$L(1,\chi_{-D}) = \sum_{n\leq N} \frac{\chi_{-D}(n)}{n} + \sum_{n > N} \frac{\chi_{-D}(n)}{n} \leq 1 + \log N + \frac{2}{N} \sqrt{D} \log D,$$
where we have used partial summation and the P\'{o}lya--Vinogradov bound $\sqrt{D} \log D$ for the partial sums of the primitive character $\chi_{-D}$ modulo $D$, as well as the bound $1 + \log x$ for $\sum_{n \leq x} \frac{1}{n}$. Setting $N = 2\sqrt{D} \log D$ gives that
\begin{equation}\label{eq:im-quad-first-term-bound}
\frac{2\pi}{D^{1/2}L(1,\chi_{-D})}\geq \frac{2\pi}{D^{1/2}\left(\frac12\log D+\log\log D+2+\log 2\right)}.
\end{equation}
Now, for any real $\sigma>1$, we have $\zeta_K(\sigma)>1$ by definition, so $\zeta_K(k+1)^{-1}<1$ for every $k \geq 1$. So, we have the bound
\begin{align*}
    \left| \sum_{k=1}^\infty \left( \frac{D}{4\pi^2} \right)^{-k - \frac{1}{2}} \frac{(-1)^k}{k(k!\!)^2 \zeta_K(k+1)} \right| &\leq \frac{2\pi}{D^{1/2}} \sum_{k=1}^\infty \left( \frac{4\pi^2}{D} \right)^k \frac{1}{k (k!)^2\zeta_K(k+1)} \\
    &\leq \frac{2\pi}{D^{1/2}} \sum_{k=1}^\infty \left( \frac{4\pi^2}{D} \right)^k \frac{1}{k!} \\
    &\leq \frac{2\pi}{D^{1/2}} \left( \exp\left( \frac{4\pi^2}{D} \right) - 1 \right).
\end{align*}
Combining this bound with \eqref{eq:im-quad-first-term-bound} gives the desired result.
\end{proof}

\cref{lem:M_K-star(1):imaginary} has the following immediate consequence via \cref{lem:when-failure}.

\begin{thm}\label{thm:large-imag-quad}
Let $K$ be an imaginary quadratic extension of $\mathbb{Q}$ with discriminant $-D$ such that $D > 307$. Then the \mertensconjecture over $K$, i.e. that $|M_K(x)|~\leq x^{1/2}$, is false. In particular, $M_K^*(1) > 0$, and if the Riemann hypothesis for $\zeta_K(s)$ is true and all of its nontrivial zeros are simple, then $M_K^+ \geq 1 + M_K^*(1)$.
\end{thm}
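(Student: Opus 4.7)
The proof is essentially a direct assembly of the two preceding lemmas together with the limit-point analysis already carried out in \cref{lem:when-failure}; the substantive content was developed earlier, so I expect no new difficulties. Here is the plan.

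First, I would invoke \cref{lem:M_K-star(1):imaginary} with the hypothesis $D > 307$ to conclude that $M_K^*(1) > 0$, which gives the second assertion in the theorem statement. This is the only place where the hypothesis $D > 307$ enters directly; everything else flows from the general machinery. Second, I would feed the positivity of $M_K^*(1)$ into \cref{lem:when-failure}(b), which immediately yields the falsity of the \mertensconjecture over $K$ and, more precisely, $M_K^+ > 1$. In particular, the basic inequality $|M_K(x)| \leq x^{1/2}$ must fail for some $x$, establishing the first assertion.

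For the final assertion, the RH hypothesis plus simplicity of the nontrivial zeros must be added. I would split into two cases as in the proof of \cref{lem:when-failure}. If the quantity $1/|\rho \zeta_K'(\rho)|$ is bounded over all nontrivial zeros $\rho$, then \cref{thm:m-star-limit-val} applies and asserts that for every $x > 0$ the value $x^{-1/2}(M_K(x) + M_K^*(x))$ is a limit point of $x^{-1/2} M_K(x)$. Taking $x \to 1^+$ and using that $M_K(x) = 1$ on $(1, N(\mathfrak{a})_{\min})$ (the smallest prime norm in $K$) while $M_K^*$ is continuous at $1$, one obtains $1 + M_K^*(1)$ as a limit point, whence $M_K^+ \geq 1 + M_K^*(1)$. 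If instead $1/|\rho \zeta_K'(\rho)|$ is unbounded, then \cref{prop:zero-on-rightmost-line} applied to zeros $\rho = \tfrac{1}{2} + i\gamma$ with arbitrarily large $1/|\rho \zeta_K'(\rho)|$ forces $M_K^+ = \infty$, so the claimed inequality holds trivially.

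The only care needed is in the right-limit at $x = 1$: one must verify that $M_K$ jumps from $\tfrac{1}{2}$ (by the halving convention at integers) up to the $1$-sided limit $1$ as $x$ increases past $1$, and that $M_K^*(x)$, defined by the absolutely convergent residue sum \eqref{eq:m-star-im-quad}, is continuous there. Both facts are routine. Given this, the inequality chain exactly as written in the proof of \cref{lem:when-failure} closes the argument, and no step should present any real obstacle.
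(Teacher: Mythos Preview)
Your proposal is correct and follows the same approach as the paper, which simply states that the theorem is an immediate consequence of \cref{lem:M_K-star(1):imaginary} via \cref{lem:when-failure}. Your elaboration of the final assertion (the inequality $M_K^+ \geq 1 + M_K^*(1)$) correctly recognizes that one must revisit the inequality chain inside the proof of \cref{lem:when-failure} rather than just cite its statement, and your case split and right-limit computation at $x=1$ match the paper's argument exactly.
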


By direct computation, we may address the smaller discriminants.

\begin{thm}\label{thm:small-imag-quad}
Let $K$ be an imaginary quadratic extension of $\mathbb{Q}$ with discriminant $-D$ such that $3<D \leq 307$. Then the \mertensconjecture over $K$ is false, with $M_K^+ > 1$. 
\end{thm}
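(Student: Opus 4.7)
The plan is to reduce the theorem to a finite numerical verification via Lemma \ref{lem:when-failure}(b). That lemma says it suffices to exhibit, for each fundamental discriminant $-D$ with $3 < D \leq 307$, that $M_K^*(1) > 0$ (from which $M_K^+ > 1$ follows unconditionally). Since only finitely many such $D$ exist (roughly $52$ values), the argument is a finite check rather than a uniform estimate in the style of Lemma \ref{lem:M_K-star(1):imaginary}.

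For each such $D$, I would evaluate
\begin{equation*}
    M_K^*(1) = \frac{w}{h(-D)} + \sum_{k=1}^\infty \left(\frac{4\pi^2}{D}\right)^{k+1/2} \frac{(-1)^k}{k\,(k!)^2\,\zeta_K(k+1)}
\end{equation*}
from \eqref{eq:m-star-im-quad}. The leading term comes out exactly via the analytic class number formula: $w/h(-D) = 2/h(-D)$ for $D > 4$, and $w/h(-D) = 4$ for $D = 4$, with both $w$ and $h(-D)$ read from standard tables. The tail requires $\zeta_K(k+1) = \zeta(k+1)\,L(k+1,\chi_{-D})$ to moderate precision, which is obtained by direct summation of the Dirichlet series (truncated via Pólya--Vinogradov, as in Lemma \ref{lem:M_K-star(1):imaginary}). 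Because $(4\pi^2/D)^{k+1/2}/(k!)^2$ decays super-exponentially in $k$ for every fixed $D$, only a small number of tail terms (say $k \leq 20$) are needed, with the uniform remainder controlled by the geometric-exponential bound $\tfrac{2\pi}{\sqrt D}(\exp(4\pi^2/D)-1)$ already used in the proof of Lemma \ref{lem:M_K-star(1):imaginary}.

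The main obstacle is the range of small $D$ (roughly $D \leq 30$), where $4\pi^2/D$ is not small and the initial tail terms are comparable in magnitude to the leading term $w/h(-D)$; the alternating series oscillates noticeably before settling, so a blunt bound on the tail (as in the proof of Theorem \ref{thm:large-imag-quad}) cannot be used. Instead, one must compute the first several tail terms individually and apply a rigorous truncation-error estimate to the remainder before drawing a sign conclusion. I expect this to be routine, if tedious, once the class numbers and the values $L(k+1,\chi_{-D})$ are computed to sufficient precision.

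If the numerical check turns up some $D$ with $-1 < M_K^*(1) \leq 0$ (so that neither alternative of Lemma \ref{lem:when-failure}(b) applies directly), the backup plan is twofold: either verify the companion inequality $M_K^*(1) < -1$, which by Lemma \ref{lem:when-failure}(b) yields $M_K^- < -1$ and still falsifies the Mertens-type conjecture; or, failing that, test the exceptional $D$ against \cref{thm:mertens-very-fail} by computing low zeros of $\zeta_K(s)$ to exhibit either a failure of RH or a multiple zero. I anticipate, however, that the leading-term-versus-tail competition tips strictly in favor of the leading term for every $D$ in the stated range, so the finite computation of $M_K^*(1)$ alone will suffice.
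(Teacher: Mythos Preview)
Your plan has a genuine gap. Your anticipation that $M_K^*(1) > 0$ for every $D$ in the range is incorrect: the computation actually gives $-1 < M_K^*(1) < 0$ for each of $D \in \{4,7,8,11,15,20,23\}$ (for instance $M_{\QQ(\sqrt{-8})}^*(1) \approx -0.575$). For these seven discriminants neither branch of \cref{lem:when-failure}(b) fires, and your stated fallbacks do not help: the inequality $M_K^*(1) < -1$ is simply false for all of them, and searching for an RH violation or a multiple zero of $\zeta_K(s)$ is hopeless here since these are abelian fields for which RH and simplicity are fully expected (and numerically confirmed in the low-lying zero range).

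The paper closes the gap in two further stages. First, it exploits \cref{thm:m-star-limit-val} at points $x > 1$, not just $x = 1$: for each exceptional $D$ one searches for a positive integer $n$ with
\[
\frac{M_K(n^+) + M_K^*(n)}{\sqrt n} > 1,
\]
which is a limit point of $x^{-1/2}M_K(x)$ and hence forces $M_K^+ > 1$. This disposes of $D \in \{7,8,11,15,20,23\}$ (e.g.\ $n = 22$ works for $D = 7$). Second, for $D = 4$ even this fails, and the paper falls back on Ingham's smoothed-sum method: one computes low zeros of $\zeta_{\QQ(i)}(s)$, forms the Jurkat--Peyerimhoff mean $h_{K,T}^*(t)$ of \eqref{eq:jurkat-peyerimhoff-means}, and exhibits a value $h_{\QQ(i),600}^*(-85.15) > 1.029$, which bounds $M_K^+$ from below directly. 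Your proposal is missing both of these ideas.
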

\begin{proof}
The values of $M_K^*(1)$ are tabulated in \cref{tab:imag-quad}, where the sum has been computed up to threshold $k = 50$.
\begin{table}[h!]
\centering
\fontsize{10}{12}\selectfont
 \begin{tabular}{||c c||} 
 \hline
$D$ & $M_K^*(1)$  \\ [0.5ex] 
 \hline\hline
3 & $ -0.4851\dots$ \\ \hline
4 & $ -0.5751\dots$ \\ \hline
7 & $ -0.5303\dots$ \\ \hline
8 & $ -0.5754\dots$ \\ \hline
11 & $ -0.4722\dots$ \\ \hline
15 & $ -0.1839\dots$ \\ \hline
19 & $ 0.2704\dots$ \\ \hline
20 & $ -0.0301\dots$ \\ \hline
23 & $ -0.0137\dots$ \\ \hline
24 & $ 0.0995\dots$ \\ \hline
31 & $ 0.1227\dots$ \\ \hline
35 & $ 0.3389\dots$ \\ \hline
39 & $ 0.1300\dots$ \\ \hline
40 & $ 0.4561\dots$ \\ \hline
43 & $ 1.3179\dots$ \\ \hline
47 & $ 0.1296\dots$ \\ \hline
51 & $ 0.5597\dots$ \\ \hline
52 & $ 0.6023\dots$ \\ \hline
55 & $ 0.2377\dots$ \\ \hline
56 & $ 0.2303\dots$ \\ \hline
59 & $ 0.3538\dots$ \\ \hline
67 & $ 1.6238\dots$ \\ \hline
68 & $ 0.2836\dots$ \\ \hline 
71 & $ 0.1374\dots$ \\ \hline 
\hline
\end{tabular}
 \begin{tabular}{||c c||} 
 \hline 
$D$ & $M_K^*(1)$  \\ [0.5ex] 
\hline 
\hline
79 & $ 0.2427\dots$ \\ \hline
83 & $ 0.4594\dots$ \\ \hline
84 & $ 0.3287\dots$ \\ \hline
87 & $ 0.2050\dots$ \\ \hline
88 & $ 0.8007\dots$ \\ \hline
91 & $ 0.7820\dots$ \\ \hline
95 & $ 0.1493\dots$ \\ \hline
103 & $ 0.2864\dots$ \\ \hline
104 & $ 0.2218\dots$ \\ \hline
107 & $ 0.5178\dots$ \\ \hline
111 & $ 0.1644\dots$ \\ \hline
115 & $ 0.8403\dots$ \\ \hline
116 & $ 0.2360\dots$ \\ \hline
119 & $ 0.1296\dots$ \\ \hline
120 & $ 0.3915\dots$ \\ \hline
123 & $ 0.8609\dots$ \\ \hline
127 & $ 0.3136\dots$ \\ \hline
131 & $ 0.3007\dots$ \\ \hline
132 & $ 0.4041\dots$ \\ \hline
136 & $ 0.4047\dots$ \\ \hline
139 & $ 0.5526\dots$ \\ \hline
143 & $ 0.1434\dots$ \\ \hline
148 & $ 0.9040\dots$ \\ \hline
151 & $ 0.2231\dots$ \\ \hline 
\hline
\end{tabular}
 \begin{tabular}{||c c||} 
 \hline
$D$ & $M_K^*(1)$  \\ [0.5ex] 
\hline 
\hline
152 & $ 0.2641\dots$ \\ \hline
155 & $ 0.4156\dots$ \\ \hline
159 & $ 0.1492\dots$ \\ \hline
163 & $ 1.8941\dots$ \\ \hline
164 & $ 0.1926\dots$ \\ \hline
167 & $ 0.1367\dots$ \\ \hline
168 & $ 0.4310\dots$ \\ \hline
179 & $ 0.3342\dots$ \\ \hline
183 & $ 0.2045\dots$ \\ \hline
184 & $ 0.4368\dots$ \\ \hline
187 & $ 0.9182\dots$ \\ \hline
191 & $ 0.1181\dots$ \\ \hline
195 & $ 0.4350\dots$ \\ \hline
199 & $ 0.1815\dots$ \\ \hline
203 & $ 0.4414\dots$ \\ \hline
211 & $ 0.6023\dots$ \\ \hline
212 & $ 0.2891\dots$ \\ \hline
215 & $ 0.1127\dots$ \\ \hline
219 & $ 0.4451\dots$ \\ \hline
223 & $ 0.2483\dots$ \\ \hline
227 & $ 0.3517\dots$ \\ \hline
228 & $ 0.4550\dots$ \\ \hline
231 & $ 0.1369\dots$ \\ \hline
232 & $ 0.9499\dots$ \\ \hline 
\hline
\end{tabular}
 \begin{tabular}{||c c||} 
 \hline
$D$ & $M_K^*(1)$  \\ [0.5ex] 
\hline 
\hline
235 & $ 0.9416\dots$ \\ \hline
239 & $ 0.1076\dots$ \\ \hline
244 & $ 0.2935\dots$ \\ \hline
247 & $ 0.2999\dots$ \\ \hline
248 & $ 0.2164\dots$ \\ \hline
251 & $ 0.2471\dots$ \\ \hline
255 & $ 0.1404\dots$ \\ \hline
259 & $ 0.4537\dots$ \\ \hline
260 & $ 0.2186\dots$ \\ \hline
263 & $ 0.1300\dots$ \\ \hline
264 & $ 0.2180\dots$ \\ \hline
267 & $ 0.9542\dots$ \\ \hline
271 & $ 0.1561\dots$ \\ \hline
276 & $ 0.2200\dots$ \\ \hline
280 & $ 0.4647\dots$ \\ \hline
283 & $ 0.6232\dots$ \\ \hline
287 & $ 0.1220\dots$ \\ \hline
291 & $ 0.4633\dots$ \\ \hline
292 & $ 0.4664\dots$ \\ \hline
295 & $ 0.2253\dots$ \\ \hline
296 & $ 0.1751\dots$ \\ \hline
299 & $ 0.2204\dots$ \\ \hline
303 & $ 0.1781\dots$ \\ \hline
307 & $ 0.6279\dots$ \\ \hline
\hline
\end{tabular}
\newline
\centering\caption{Values of $M_K^*(1)$ for imaginary quadratic fields $K = \mathbb{Q}(\sqrt{-D})$ of discriminant $-D$, $D \leq 307$.}
\label{tab:imag-quad}
\end{table}
This implies the falsity of the \mertensconjecture over $K$ for all $D \leq 307$ where
\begin{equation*}
    -D \notin \{-3,-4,-7,-8,-11,-15,-20,-23\},
\end{equation*} 
since, via \cref{lem:when-failure}, we deduce from $M_K^*(1) > 0$ that $M_K^+ > 1$. The procedure can be remedied for most of those exceptional values of $-D$ by letting $x \rightarrow n^+$ for different choices of positive integers $n$ where
\begin{equation*}
    \frac{M_K(n^{+}) + M_K^*(n)}{n^{1/2}} > 1,
\end{equation*}
as represented in \cref{tab:imag-quad-larger-n} (we have taken the right-hand limits, as $n^{-1/2}M_K^*(n)$ is generally decreasing). This suffices to resolve all cases except for $D = 3$ (the field $K = \mathbb{Q}(\sqrt{-3})$) and $D = 4$ (the field $K = \mathbb{Q}(i)$).
\begin{table}[h!]
    \centering
\begin{tabular}{||c c c||}
\hline
$D$ & $n$ & $\frac{M_K(n^+)+ M_K^*(n)}{\sqrt{n}}$  \\ [0.5ex]
\hline
\hline
7 & $ 22 $ & $ 1.2138\dots$ \\ \hline
8 & $ 57 $ & $ 1.1777\dots$ \\ \hline
11 & $ 20 $ & $ 1.2923\dots$ \\ \hline
15 & $ 98 $ & $ 1.0080\dots$ \\ \hline
20 & $ 15 $ & $ 1.0076\dots$ \\ \hline
24 & $ 6 $ & $ 1.0261\dots$ \\ \hline
\hline
\end{tabular}
    \caption{Given $K=\QQ(\sqrt{-D})$ for a fundamental discriminant $-D$, the smallest positive integer $n$ for which $M_K(n^+)+M_K^*(n)>n^{1/2}$.}
    \label{tab:imag-quad-larger-n}
\end{table}

In order to resolve the case $D = 4$, we appeal to a method originally due to Ingham~\cite[Theorem~1]{Ingham} in examining the smoothed sums of $\sum_{|\gamma| <T} \frac{e^{i\gamma t}}{\rho \zeta_K'(\rho)}$ by a suitable kernel function $f(y)$. In particular, we select the kernel function of Jurkat--Peyerimhoff (see~\cite{JP,OdlyzkoteRiele}),
\begin{align*}
    f(y) = 
    \begin{cases}
    (1 - y) \cos \pi y + \frac{1}{\pi} \sin \pi y &\text{ if } 0 \leq y \leq 1, \\
    0 &\text{ if } y \geq 1,
    \end{cases}
\end{align*}
which produces the smoothed sum
\begin{equation}\label{eq:jurkat-peyerimhoff-means}
    h_{K,T}^*(t) = \sum_{|\gamma| < T} f\left( \frac{|\gamma|}{T} \right) \frac{e^{i \gamma y}}{\rho \zeta_K'(\rho)} = \sum_{|\gamma| < T} \left( \left( 1 - \frac{|\gamma|}{T} \right) \cos \left( \frac{\pi |\gamma|}{T} \right) + \frac{1}{\pi} \sin \left(\frac{\pi |\gamma|}{T} \right) \right) \frac{e^{i \gamma y}}{\rho \zeta_K'(\rho)}. 
\end{equation}
It holds for this sum that for $T > 0$ fixed,
\begin{equation*}
    M_K^- \leq \liminf_{t \rightarrow \infty} h_{K,T}^*(t) \leq \limsup_{t \rightarrow \infty} h_{K,T}^*(t) \leq M_K^+,
\end{equation*}
and it follows from the fact that $h_{K,T}^*(t)$ is almost periodic that
\begin{equation*}
    \liminf_{t \rightarrow \infty} h_{K,T}^*(t) \leq h_{K,T}^*(t) \leq \limsup_{t \rightarrow \infty} h_{K,T}^*(t),
\end{equation*}
so it suffices to exhibit $T > 0$ and $t \in \mathbb{R}$ for which $h_{K,T}^*(t) > 1$ or $h_{K,T}^*(t) < -1$.  

A computation demonstrates that $h^*_{\QQ(i), 600}(72.85) \leq -1.008$ and $h^*_{\QQ(i), 600}(-85.15) \geq 1.029$. That is, $M_K^- \leq -1.008 < 1.029 \leq M_K^+$. This completes the proof.
\end{proof}

Combining \cref{thm:large-imag-quad} and \cref{thm:small-imag-quad} concludes the proof of \cref{thm:mertens-quadfields}(a).

\subsection{Real quadratic fields} Next, we calculate and bound $M_K^*(x)$ explicitly for real quadratic fields. Pick a positive fundamental discriminant $D>0$ and let $K=\QQ(\sqrt{D})$ be the~real quadratic field with discriminant $D$. Then $\zeta_K(s)=\zeta(s)L(s,\chi_D)$, where $\chi_D$ is the even quadratic character $\chi_D(n)=\left(\frac Dn\right)$ of conductor $D$. We compute
\begin{dmath}\label{eq:m-star-real-quad}
    M_K^*(x) = \frac{4}{L(1,\chi_D)D^{1/2}} \left( \log \left( \frac{xD}{4\pi^2} \right) - \gamma + \frac{L'}{L}(1,\chi_D) \right)+ \frac{2}{D^{1/2}} {\sum_{k=1}^\infty \frac{\left(\frac{4\pi^2}{xD}\right)^{2k}}{k((2k)!\!)^2 \zeta_K(2k+1)} \left( \log \left( \frac{xD}{4\pi^2} \right) + \frac{1}{2k} + \frac{\zeta_K'}{\zeta_K}(2k+1) + 2 \frac{\Gamma'}{\Gamma}(2k+1) \right).}
\end{dmath}
Here, $\gamma = 0.5772\dots$ is Euler's constant. The $L'/L$, $\zeta_K'/\zeta_K$, and $\Gamma'/\Gamma$ terms arise (in contrast to \eqref{eq:m-star-im-quad}) since $\zeta_K(s)$ has zeros of multiplicity two when $K$ is real quadratic. We now only~need to identify which $D$ satisfy the estimate $M_K^*(1)>0$. We first provide a bound on the $L'/L$ term that is particular to the case of real characters.

\begin{lem}\label{lem:LprimeoverL1chi} Let $\chi$ be a non-principal real primitive character of conductor $D$. Then
$$\frac{L'}{L}(1,\chi)> -\frac12\left(\log\frac D{2\pi}-\gamma\right)+\frac{\chi(-1)}2\log 2.$$
\end{lem}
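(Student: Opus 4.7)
The plan is to reduce the desired inequality to the strict positivity of a logarithmic derivative of the completed $L$-function, then apply the Hadamard product. Set $a = (1-\chi(-1))/2$ and let
$$\xi(s,\chi) = \left(\frac{D}{\pi}\right)^{(s+a)/2}\Gamma\left(\frac{s+a}{2}\right)L(s,\chi),$$
which is entire of order $1$ and satisfies $\xi(s,\chi) = W(\chi)\xi(1-s,\chi)$ with $|W(\chi)| = 1$, using $\bar\chi = \chi$. Logarithmic differentiation at $s=1$ gives
$$\frac{\xi'}{\xi}(1,\chi) = \frac{1}{2}\log\frac{D}{\pi} + \frac{1}{2}\psi\!\left(\frac{1+a}{2}\right) + \frac{L'}{L}(1,\chi),$$
where $\psi = \Gamma'/\Gamma$. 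A short computation using $\psi(1/2) = -\gamma - 2\log 2$ (for $a=0$) and $\psi(1) = -\gamma$ (for $a=1$) shows that, uniformly in the parity of $\chi$,
$$\frac{L'}{L}(1,\chi) + \frac{1}{2}\!\left(\log\frac{D}{2\pi} - \gamma\right) - \frac{\chi(-1)}{2}\log 2 \;=\; \frac{\xi'}{\xi}(1,\chi).$$
Thus the lemma is equivalent to the bound $\xi'/\xi(1,\chi) > 0$.

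Next I would invoke the Hadamard factorization
$$\xi(s,\chi) = e^{a_0 + b_0 s}\prod_\rho\!\left(1-\frac{s}{\rho}\right)e^{s/\rho},$$
whose logarithmic derivative is
$$\frac{\xi'}{\xi}(s,\chi) = b_0 + \sum_\rho\!\left(\frac{1}{s-\rho} + \frac{1}{\rho}\right).$$
Evaluating at $s=0$ recovers $b_0 = \xi'/\xi(0,\chi)$, while the functional equation yields $\xi'/\xi(s,\chi) = -\xi'/\xi(1-s,\chi)$, hence $b_0 = -\xi'/\xi(1,\chi)$. Substituting $s=1$ and solving gives the clean identity
$$\frac{\xi'}{\xi}(1,\chi) = \frac{1}{2}\sum_\rho \frac{1}{\rho(1-\rho)}.$$

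Finally, the sum converges absolutely, since $|1/(\rho(1-\rho))| = O(1/|\gamma|^2)$ and the zero-counting estimate $N(T+1) - N(T) \ll \log(DT)$ holds. Because $\chi$ is real, non-real zeros appear in conjugate pairs and, writing $\rho = \beta + i\gamma$,
$$\frac{1}{\rho(1-\rho)} + \frac{1}{\bar\rho(1-\bar\rho)} = \frac{2(\beta(1-\beta)+\gamma^2)}{|\rho(1-\rho)|^2} > 0,$$
while any real zero $\rho \in (0,1)$ contributes $1/(\rho(1-\rho)) > 0$. Since $L(s,\chi)$ has infinitely many nontrivial zeros, the sum is strictly positive, giving $\xi'/\xi(1,\chi) > 0$. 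The main obstacle is algebraic rather than analytic: one must carefully track the gamma-factor values in the two parities so that the stated right-hand side collapses neatly to $L'/L(1,\chi) - \xi'/\xi(1,\chi)$; once this reduction is in place, the Hadamard-product identity and conjugate-pair positivity are standard.
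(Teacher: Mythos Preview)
Your proof is correct and follows essentially the same approach as the paper. Both arguments reduce the inequality to the positivity of $\sum_\rho \frac{1}{\rho(1-\rho)}$, established by pairing $\rho$ with $\bar\rho$; the only cosmetic difference is that you package the gamma-factor bookkeeping as the single statement $\xi'/\xi(1,\chi)>0$ and derive the key identity directly from the Hadamard product and the functional equation of $\xi$, whereas the paper quotes the equivalent formula from \cite[Corollary~10.18]{MV} before specializing to $s=1$.
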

\begin{proof} Recall \cite[Corollary~10.18]{MV} for any Dirichlet character $\chi$ of conductor $D$ the identity
$$\frac{L'}{L}(s,\chi)=-\frac{L'}{L}(1,\overline\chi)-\frac12\frac{\Gamma'}{\Gamma}\left(\frac{s+\kappa}2\right)-\log\frac D\pi+\sum_\rho\left(\frac1{s-\rho}+\frac1\rho\right)+\frac\gamma2+(1-\kappa)\log 2,$$
where $\kappa=\frac{1-\chi(-1)}2$ and $\rho$ runs over all nontrivial zeros of $L(s,\chi)$. Specifying to $s=1$ and $\chi = \overline{\chi}$ gives
$$2\frac{L'}{L}(1,\chi)=-\frac12\frac{\Gamma'}{\Gamma}\left(\frac{1+\kappa}2\right)-\log\frac D\pi+\sum_\rho\frac1{\rho(1-\rho)}+\frac{\gamma}2+(1-\kappa)\log 2.$$
Since
$$\frac{\Gamma'}{\Gamma}\left(\frac12\right)=-\gamma-2\log 2\qquad \frac{\Gamma'}{\Gamma}(1)=-\gamma,$$
this gives
$$2\frac{L'}{L}(1,\chi)=\gamma+2(1-\kappa)\log 2-\log\frac D\pi+\sum_\rho\frac1{\rho(1-\rho)}.$$
By grouping $\rho$ and $\overline\rho$ in the sum we have
\begin{align*}
\sum_\rho\frac1{\rho(1-\rho)}
&=\frac12\sum_\rho\left(\frac1{\rho(1-\rho)}+\frac1{\overline\rho(1-\overline\rho)}\right)\\
&=\sum_\rho\frac{\Real(\rho-\rho^2)}{|\rho|^2|1-\rho|^2}\\
&=\sum_\rho\frac{\beta(1-\beta)+\gamma^2}{|\rho|^2|1-\rho|^2}>0,
\end{align*}
where $\rho=\beta+i\gamma$ and we have used $0<\beta<1$. This gives
$$2\frac{L'}{L}(1,\chi)=\gamma+2(1-\kappa)\log 2-\log\frac D\pi+\sum_\rho\frac1{\rho(1-\rho)}> \gamma+\log2+\chi(-1)\log 2-\log\frac D\pi,$$
as desired.
\end{proof}

This allows us to establish the following result on $M_K^*(1)$.

\begin{thm}
Let $K$ be a real quadratic extension of $\mathbb{Q}$ with discriminant $D > 269$. Then the \mertensconjecture over $K$, i.e. that $-1 \leq M_K^- \leq M_K^+ \leq 1$, is false. In particular, $M_K^*(1) > 0$, and if the Riemann hypothesis for $\zeta_K(s)$ is true and all of its nontrivial zeros are simple and not real, then $M_K^+ \geq 1 + M_K^*(1)$.
\end{thm}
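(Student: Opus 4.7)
The strategy parallels that of \cref{lem:M_K-star(1):imaginary}: we aim to show unconditionally that $M_K^*(1) > 0$, after which the result $M_K^+ > 1$ (and the conditional refinement $M_K^+ \geq 1 + M_K^*(1)$) follows at once from \cref{lem:when-failure}(b) and \cref{thm:m-star-limit-val}, using that the character $\chi_D$ is even so that $\zeta_K(s)$ has no real nontrivial zero. Specializing \eqref{eq:m-star-real-quad} to $x = 1$, we write $M_K^*(1) = A(D) + B(D)$, where $A(D)$ is the first term
\[
    A(D) = \frac{4}{L(1,\chi_D) D^{1/2}} \left( \log \frac{D}{4\pi^2} - \gamma + \frac{L'}{L}(1,\chi_D) \right)
\]
and $B(D)$ is the sum over $k \geq 1$. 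The plan is to produce an explicit lower bound $A(D) \geq \alpha(D)$ and a bound $|B(D)|~\leq \beta(D)$ for which $\alpha(D) > \beta(D)$ whenever $D > 269$.

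The first step is to control $A(D)$. Since $\chi_D(-1) = 1$, \cref{lem:LprimeoverL1chi} gives
\[
    \frac{L'}{L}(1,\chi_D) > -\tfrac{1}{2}\log\tfrac{D}{2\pi} + \tfrac{\gamma}{2} + \tfrac{1}{2}\log 2,
\]
and combining with the $-\gamma + \log(D/(4\pi^2))$ factor yields the clean lower bound
\[
    \log\frac{D}{4\pi^2} - \gamma + \frac{L'}{L}(1,\chi_D) > \tfrac{1}{2}\log D - \tfrac{3}{2}\log(2\pi) - \tfrac{\gamma}{2} + \tfrac{1}{2}\log 2,
\]
which is positive precisely when $D > 4\pi^3 e^\gamma \approx 220.9$. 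Next, the P\'olya--Vinogradov-type upper bound used in the imaginary case (partial summation plus $|\sum_{n \leq N} \chi_D(n)|~\leq \sqrt{D} \log D$, applied to $L(1,\chi_D)$ rather than $L(1,\chi_{-D})$) gives
\[
    L(1,\chi_D) \leq \tfrac{1}{2}\log D + \log\log D + 2 + \log 2,
\]
and $L(1,\chi_D) > 0$ by the class number formula. Taking the ratio provides the desired explicit $\alpha(D)$.

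The second step is to bound $|B(D)|$. Using $\zeta_K(2k+1) > 1$, $|(\zeta_K'/\zeta_K)(2k+1)|~\leq 2\sum_{p} p^{-2k-1}\log p$ which is $O(4^{-k}\log k)$, and the closed form $(\Gamma'/\Gamma)(2k+1) = -\gamma + H_{2k}$, each bracket in $B(D)$ is bounded in absolute value by $\log D + 2 H_{2k} + O(1)$. Since the weights $(4\pi^2/D)^{2k}/(k((2k)!\!)^2)$ decay super-exponentially, the $k = 1$ term dominates, giving
\[
    |B(D)|~\leq \frac{2}{D^{1/2}}\cdot\frac{(4\pi^2/D)^2}{4}\bigl(\log D + O(1)\bigr) + O\!\left(\frac{1}{D^{9/2}}\log D\right),
\]
and this tail is manifestly $o(D^{-1/2})$ as $D \to \infty$, whereas $\alpha(D) \gg D^{-1/2}$ once the bracket in the first step is bounded below by a positive constant. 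A careful numerical comparison, keeping all implicit constants explicit, then shows $\alpha(D) > \beta(D)$ for $D > 269$, pinning down the threshold.

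The main obstacle is the explicit numerical work: the logarithmic growth of $A(D)$ competes only mildly with the $\log\log D$ denominator and the tail $B(D)$, so narrowing the threshold down to $269$ (as opposed to, say, $500$) requires keeping every constant sharp, in particular using the form of \cref{lem:LprimeoverL1chi} (rather than a weaker $L'/L \geq -\frac{1}{2}\log D + O(1)$) and exploiting that $B(D)$ is actually \emph{positive} for $D$ only moderately large (where $\log(D/(4\pi^2)) + \frac{1}{2} + 2(\Gamma'/\Gamma)(3)$ is positive), eliminating cancellation with $A(D)$. Once $M_K^*(1) > 0$ is established for $D > 269$, \cref{lem:when-failure}(b) gives $M_K^+ > 1$, and under RH plus simplicity of zeros (which, together with $\zeta_K(1/2) \neq 0$ guaranteed by the sign of the functional equation for even characters, place us in the regime of \cref{thm:m-star-limit-val}), the stronger $M_K^+ \geq 1 + M_K^*(1)$ follows from \cref{thm:m-star-limit-val} upon letting $x \to 1^+$.
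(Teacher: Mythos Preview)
Your approach is correct and essentially identical to the paper's: both decompose $M_K^*(1)$ into the leading term and the tail sum, bound the leading term below via \cref{lem:LprimeoverL1chi} together with the P\'olya--Vinogradov estimate for $L(1,\chi_D)$, bound the tail above using $\zeta_K(2k+1)\geq 1$ and the digamma/log-derivative estimates, and then verify the resulting explicit inequality numerically for $D>269$. The only cosmetic difference is that the paper packages the tail bound as $\Sigma \leq \bigl(\cosh(4\pi^2/D)-1\bigr)\bigl(\tfrac12\log D + \tfrac32 - \gamma - \log\pi\bigr)$ rather than arguing that the $k=1$ term dominates, which makes the final numerical check a single clean inequality; your observation that $B(D)$ is eventually positive is not used in the paper.
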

\begin{proof}
We begin by bounding the contribution of the infinite series in $M_K^*(1)$. After dividing out by a factor of $2/D^{1/2}$, the series under consideration takes the form
\begin{equation}
    \Sigma = \sum_{k=1}^\infty \left( \frac{4\pi^2}{D} \right)^{2k} \frac{1}{k ((2k)!\!)^2 \zeta_K(2k+1)} \left( \log\left( \frac{D}{4\pi^2} \right) + \frac{1}{2k} + \frac{\zeta_K'}{\zeta_K}(2k+1) + 2 \frac{\Gamma'}{\Gamma}(2k+1) \right).
\end{equation}
We note that $\zeta_K(2k + 1) \geq 1$ for all $k \geq 1$, and by the triangle inequality,
\begin{align*}
    \left| \frac{\zeta_K'}{\zeta_K}(2k+1) \right| &= \left| \frac{\zeta'}{\zeta}(2k+1) +  \frac{L'}{L}(2k+1,\chi_D) \right| \leq 2\sum_{n=1}^\infty \frac{\Lambda(n)}{n^{2k+1}}  \\
    &\leq 2\int_1^\infty \frac{\log x}{x^{2k+1}}\, dx = \frac{1}{2k^2}.
\end{align*}
We also have the following known identity for the digamma function:
\begin{equation*}
    2\frac{\Gamma'}{\Gamma}(2k + 1) = 2 \left( \sum_{n=1}^{2k} \frac{1}{n} - \gamma \right) \leq 2 \log(2k) + (2 - 2\gamma),
\end{equation*}
where $\gamma$ is Euler's constant. Finally, we use these bounds to see that, for $D > 4\pi^2$,
\begin{align*}
    \Sigma &\leq \sum_{k=1}^\infty \left( \frac{4\pi^2}{D} \right)^{2k} \frac{1}{k ((2k)!\!)^2} \left( \log\left( \frac{D}{4\pi^2} \right) + \frac{1}{2k} + \frac{1}{2k^2} + 2 \log(2k) + (2 - 2\gamma) \right) \\
    &\leq \left( \cosh\left( \frac{4\pi^2}{D} \right) - 1 \right) \left( \frac{1}{2}\log \left( \frac{D}{4\pi^2} \right) + \frac{1}{4} + \frac{1}{4} + \log 2 + (1 - \gamma) \right)\\
    &= \left( \cosh \left( \frac{4\pi^2}{D} \right) - 1 \right) \left( \frac{1}{2} \log D +\frac 32-\log\pi -\gamma\right).
\end{align*}

We now turn our attention to the first term of $M_K^*(1)$ in \eqref{eq:m-star-real-quad}. Upon dividing out by a factor of $2/D^{1/2}$, we have
\begin{equation*}
    \frac{2}{L(1,\chi_D)} \left( \log \left( \frac{D}{4\pi^2} \right) - \gamma + \frac{L'}{L}(1,\chi_D) \right).
\end{equation*}
To this end, we recall the previously obtained bound from \eqref{eq:im-quad-first-term-bound}
\begin{equation*}
    \frac{1}{L(1,\chi_D)} \geq \frac{1}{\frac{1}{2} \log D + \log \log D + 2 + \log 2}.
\end{equation*}
Finally, using \cref{lem:LprimeoverL1chi} in the case of an even character, the problem reduces to finding the values $D$ at which the inequality
\begin{dmath*}
    \frac{2 \left(\frac12\log D - \frac32\log\pi-\log2-\frac\gamma2\right)}{\frac{1}{2} \log D + \log \log D + 2 + \log 2}
    > \left( \cosh\left( \frac{4\pi^2}{D} \right) - 1 \right) \left( \frac{1}{2} \log D + \frac32-\gamma-\log\pi \right)
\end{dmath*}
holds. This indeed holds for all $D > 269$.
\end{proof}

By direct computation, we may address the smaller discriminants.

\begin{thm}\label{thm:small-real-quad}
Let $K$ be an real quadratic extension of $\mathbb{Q}$ with discriminant $5 < D \leq 269$. Then the \mertensconjecture over $K$ is false, with $M_K^+ > 1$.

\begin{proof}
As in the proof of \cref{thm:small-imag-quad}, the values of $M_K^*(1)$ are tabulated in \cref{tab:real-quad}, where the sum has been computed up to threshold $k = 50$. 
\begin{table}[h!]
\centering
\fontsize{10}{12}\selectfont
\begin{tabular}{||c c||}
\hline
$D$ & $M_K^*(1)$  \\ [0.5ex]
\hline
\hline
5 & $ -0.4857\dots$ \\ \hline
8 & $ -0.5362\dots$ \\ \hline
12 & $ -0.5230\dots$ \\ \hline
13 & $ -0.6401\dots$ \\ \hline
17 & $ -0.3642\dots$ \\ \hline
21 & $ -0.5180\dots$ \\ \hline
24 & $ -0.3364\dots$ \\ \hline
28 & $ -0.2605\dots$ \\ \hline
29 & $ -0.3697\dots$ \\ \hline
33 & $ -0.1707\dots$ \\ \hline
37 & $ -0.2060\dots$ \\ \hline
40 & $ -0.1436\dots$ \\ \hline
41 & $ -0.1227\dots$ \\ \hline
44 & $ -0.1153\dots$ \\ \hline
53 & $ 0.1152\dots$ \\ \hline
56 & $ -0.0201\dots$ \\ \hline
57 & $ -0.0452\dots$ \\ \hline
60 & $ -0.0198\dots$ \\ \hline
61 & $ -0.0162\dots$ \\ \hline
65 & $ -0.0135\dots$ \\ \hline
69 & $ 0.0750\dots$ \\ \hline
\hline
\end{tabular}
\begin{tabular}{||c c||}
\hline
$D$ & $M_K^*(1)$  \\ [0.5ex]
\hline
\hline
73 & $ -0.0079\dots$ \\ \hline
76 & $ 0.0126\dots$ \\ \hline
77 & $ 0.4411\dots$ \\ \hline
85 & $ 0.0816\dots$ \\ \hline
88 & $ 0.0506\dots$ \\ \hline
89 & $ 0.0410\dots$ \\ \hline
92 & $ 0.2036\dots$ \\ \hline
93 & $ 0.2430\dots$ \\ \hline
97 & $ 0.0274\dots$ \\ \hline
101 & $ 0.3760\dots$ \\ \hline
104 & $ 0.1804\dots$ \\ \hline
105 & $ 0.0461\dots$ \\ \hline
109 & $ 0.1101\dots$ \\ \hline
113 & $ 0.0921\dots$ \\ \hline
120 & $ 0.1362\dots$ \\ \hline
124 & $ 0.0851\dots$ \\ \hline
129 & $ 0.0617\dots$ \\ \hline
133 & $ 0.2189\dots$ \\ \hline
136 & $ 0.0951\dots$ \\ \hline
137 & $ 0.1183\dots$ \\ \hline
140 & $ 0.2922\dots$ \\ \hline
\hline
\end{tabular}
\begin{tabular}{||c c||}
\hline
$D$ & $M_K^*(1)$  \\ [0.5ex]
\hline
\hline
141 & $ 0.2217\dots$ \\ \hline
145 & $ 0.0569\dots$ \\ \hline
149 & $ 0.3829\dots$ \\ \hline
152 & $ 0.4500\dots$ \\ \hline
156 & $ 0.1444\dots$ \\ \hline
157 & $ 0.2747\dots$ \\ \hline
161 & $ 0.1068\dots$ \\ \hline
165 & $ 0.3070\dots$ \\ \hline
168 & $ 0.2387\dots$ \\ \hline
172 & $ 0.1414\dots$ \\ \hline
173 & $ 1.2271\dots$ \\ \hline
177 & $ 0.0981\dots$ \\ \hline
181 & $ 0.1952\dots$ \\ \hline
184 & $ 0.1087\dots$ \\ \hline
185 & $ 0.1436\dots$ \\ \hline
188 & $ 0.5386\dots$ \\ \hline
193 & $ 0.0742\dots$ \\ \hline
197 & $ 0.8499\dots$ \\ \hline
201 & $ 0.0901\dots$ \\ \hline
204 & $ 0.1653\dots$ \\ \hline
& \\
\hline \hline
\end{tabular}
\begin{tabular}{||c c||}
\hline
$D$ & $M_K^*(1)$  \\ [0.5ex]
\hline
\hline
205 & $ 0.2148\dots$ \\ \hline
209 & $ 0.1307\dots$ \\ \hline
213 & $ 0.6172\dots$ \\ \hline
217 & $ 0.0825\dots$ \\ \hline
220 & $ 0.1559\dots$ \\ \hline
221 & $ 0.3885\dots$ \\ \hline
229 & $ 0.2186\dots$ \\ \hline
232 & $ 0.1609\dots$ \\ \hline
233 & $ 0.1703\dots$ \\ \hline
236 & $ 0.3216\dots$ \\ \hline
237 & $ 0.6833\dots$ \\ \hline
241 & $ 0.0724\dots$ \\ \hline
248 & $ 0.6696\dots$ \\ \hline
249 & $ 0.0898\dots$ \\ \hline
253 & $ 0.2825\dots$ \\ \hline
257 & $ 0.2055\dots$ \\ \hline
264 & $ 0.2125\dots$ \\ \hline
265 & $ 0.0824\dots$ \\ \hline
268 & $ 0.1677\dots$ \\ \hline
269 & $ 0.5460\dots$ \\ \hline
& \\
\hline \hline
\end{tabular}
\newline
\centering\caption{Values of $M_K^*(1)$ for imaginary quadratic fields $K = \mathbb{Q}(\sqrt{D})$ of discriminant $D$, $D \leq 269$.}
\label{tab:real-quad}
\end{table}
This implies the falsity of the \mertensconjecture over $K$ for all $D \leq 269$ where
\begin{equation*}
    D \notin \{5,8,12,13,17,21,24,28,29,33,37,40,41,44,56,57,60,61,65,73\}.
\end{equation*} 
Again, this can be remedied for most of the above discriminants by letting $x \rightarrow n^{+}$ for other positive integers $n$ for which $n^{-1/2}(M_K(n^{+})+ M_K^*(n)) > 1$. These findings are presented in \cref{tab:real_quadratic_exceptional_cases}, which suffices to resolve all cases except for $D = 5$ (the field $\QQ(\sqrt{5})$), $D = 8$ (the field $\QQ(\sqrt{2})$), and $D = 12$ (the field $\QQ(\sqrt{3})$).
\begin{table}[h!]
    \centering
\begin{tabular}{||c c c||}
\hline
$D$ & $n$ & $\frac{M_K(n^+) + M_K^*(n)}{\sqrt{n}}$  \\ [0.5ex]
\hline
\hline
13 & $ 100 $ & $ 1.0670\dots$ \\ \hline
17 & $ 38 $ & $ 1.0439\dots$ \\ \hline
21 & $ 35 $ & $ 1.2558\dots$ \\ \hline
24 & $ 146 $ & $ 1.1272\dots$ \\ \hline
28 & $ 94 $ & $ 1.0049\dots$ \\ \hline
29 & $ 49 $ & $ 1.0038\dots$ \\ \hline
\hline
\end{tabular}
\begin{tabular}{||c c c||}
\hline
$D$ & $n$ & $\frac{M_K(n^+) + M_K^*(n)}{\sqrt{n}}$  \\ [0.5ex]
\hline
\hline
33 & $ 82 $ & $ 1.0991\dots$ \\ \hline
37 & $ 33 $ & $ 1.4651\dots$ \\ \hline
40 & $ 159 $ & $ 1.0713\dots$ \\ \hline
41 & $ 215 $ & $ 1.2509\dots$ \\ \hline
44 & $ 917 $ & $ 1.0343\dots$ \\ \hline
56 & $ 65 $ & $ 1.1612\dots$ \\ \hline
\hline
\end{tabular}
\begin{tabular}{||c c c||}
\hline
$D$ & $n$ & $\frac{M_K(n^+) + M_K^*(n)}{\sqrt{n}}$  \\ [0.5ex]
\hline
\hline
57 & $ 146 $ & $ 1.1290\dots$ \\ \hline
60 & $ 35 $ & $ 1.2919\dots$ \\ \hline
61 & $ 39 $ & $ 1.1080\dots$ \\ \hline
65 & $ 26 $ & $ 1.0031\dots$ \\ \hline
73 & $ 9 $ & $ 1.1778\dots$ \\ \hline
 & & \\
\hline \hline
\end{tabular}
    \caption{Given $K=\QQ(\sqrt{-D})$ for a fundamental discriminant $-D$, the smallest positive integer $n$ for which $M_K(n^+)+M_K^*(n)>n^{1/2}$.}
    \label{tab:real_quadratic_exceptional_cases}
\end{table} 
For these fields, we return to the means $h_{K,T}^*(t)$ as defined in \eqref{eq:jurkat-peyerimhoff-means}. A computation shows that $t = 17.32$ at $T = 200$ suffices for $K = \QQ(\sqrt{3})$ with a bound of $M_K^+ > 1.027$. Moreover, $t = -24.64$ at $T = 150$ suffices for $K = \QQ(\sqrt{2})$ with a bound of $M_K^+ > 1.049$.
\end{proof}
\end{thm}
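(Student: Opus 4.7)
The plan mirrors the structure of the proof of \cref{thm:small-imag-quad} for imaginary quadratic fields, specialized now to the real quadratic formula \eqref{eq:m-star-real-quad} for $M_K^*(x)$. First I would truncate the infinite series in \eqref{eq:m-star-real-quad} at some safe threshold (say $k = 50$), noting that the factor $(4\pi^2/D)^{2k}/((2k)!\!)^2$ enforces super-exponential decay so that the tail is numerically negligible for all $D \geq 5$. This produces a table of values of $M_K^*(1)$ for each fundamental discriminant $5 < D \leq 269$, and for the vast majority of these $D$ one has $M_K^*(1) > 0$, whence \cref{lem:when-failure}(b) immediately gives $M_K^+ > 1$ and the \mertensconjecture over $K$ is false.

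For the discriminants on which $M_K^*(1) \leq 0$, the plan is to exploit the jump discontinuity of $M_K(x)$ at integer points. Specifically, since $M_K(x)$ increases by $\tfrac12(\mu_K(n^+) - \mu_K(n^-)) \cdot (\text{count})$ across $x = n$ under our convention, and $M_K^*(x)$ is continuous, the function $x^{-1/2}(M_K(x) + M_K^*(x))$ jumps upward at each positive integer $n$ with a negative $\mu_K$-contribution at $n$. Thus for each stubborn $D$, I would search over positive integers $n$ for the smallest one satisfying $n^{-1/2}(M_K(n^+) + M_K^*(n)) > 1$, and invoke the right-limit version of \cref{thm:m-star-limit-val} (as already used in the imaginary quadratic case) to deduce $M_K^+ > 1$. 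This is pure computation and should resolve all but a short list of small discriminants.

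The main obstacle will be the three smallest cases $D \in \{5, 8, 12\}$, i.e.\ $\mathbb{Q}(\sqrt 5)$, $\mathbb{Q}(\sqrt 2)$, and $\mathbb{Q}(\sqrt 3)$: for $D = 5$ no method in this paper is expected to work (cf.~\cref{rmk:q-sqrt-neg-3}), and for $D = 8, 12$ integer-jump tricks may also fail because the magnitude of the shift $n^{-1/2}$ decreases while $M_K^*(1)$ is stubbornly negative. To handle $D = 8$ and $D = 12$, I would follow the strategy used for $\mathbb{Q}(i)$: form the Jurkat--Peyerimhoff smoothed sum
\[
    h_{K,T}^*(t) = \sum_{|\gamma| < T} \left( \left(1 - \frac{|\gamma|}{T}\right) \cos\!\left(\frac{\pi|\gamma|}{T}\right) + \frac{1}{\pi} \sin\!\left(\frac{\pi|\gamma|}{T}\right) \right) \frac{e^{i\gamma t}}{\rho\, \zeta_K'(\rho)},
\]
which satisfies $M_K^- \leq h_{K,T}^*(t) \leq M_K^+$ for all $t$ by almost periodicity, and numerically search for parameters $(T, t)$ forcing $|h_{K,T}^*(t)| > 1$. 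The real practical difficulty is that this requires reasonably accurate numerical values of many nontrivial zeros $\rho$ of $\zeta_K$ and of $\zeta_K'(\rho)$ for $K = \mathbb{Q}(\sqrt 2)$ and $K = \mathbb{Q}(\sqrt 3)$; once those are computed, a direct evaluation at modestly large $T$ (analogous to the $T = 600$ used for $\mathbb{Q}(i)$) should produce witness values such as the paper's $h^*_{\mathbb{Q}(\sqrt 3),200}(17.32) > 1.027$ and $h^*_{\mathbb{Q}(\sqrt 2),150}(-24.64) > 1.049$ referenced at the end.

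Finally, $D = 5$ remains unresolved by these methods, which is exactly the exception recorded in \cref{thm:mertens-quadfields}(b). Combining the three strata of the above argument, namely (i) the positivity of the truncated $M_K^*(1)$ for large enough $D$, (ii) the integer-jump evaluation for a short intermediate list, and (iii) the smoothed-sum computation for $D \in \{8, 12\}$, gives the claimed falsity of the \mertensconjecture with $M_K^+ > 1$ for all $5 < D \leq 269$.
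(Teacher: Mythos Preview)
Your proposal is correct and follows essentially the same three-tier approach as the paper: (i) tabulate $M_K^*(1)$ truncated at $k=50$ and invoke \cref{lem:when-failure}(b) whenever it is positive; (ii) for the remaining discriminants, search for an integer $n$ with $n^{-1/2}(M_K(n^+)+M_K^*(n))>1$; (iii) for $D\in\{8,12\}$ fall back on the Jurkat--Peyerimhoff smoothed sum $h_{K,T}^*(t)$, leaving $D=5$ as the stated exception. One small slip: at an integer $n$ with $\sum_{N(\mathfrak a)=n}\mu(\mathfrak a)<0$ the function $M_K(x)$ jumps \emph{down}, not up, but this does not affect the search criterion you actually use.
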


This concludes the proof of \cref{thm:mertens-quadfields}(b).

\subsection{General number fields}
In this subsection, we show that $M_K^*(1)>0$ for all number fields $K$ with sufficiently large discriminant and fixed signature. In light of \cref{lem:when-failure}, this implies that the \mertensconjecture is false for such $K$. We first show that the contribution of the residues at strictly negative integers is relatively small.

\begin{lem}\label{lem:residues-negative-ints-bounds} Fix a signature $(r_1,r_2)$, and consider a number field $K$ with $r_1$ real embeddings, $2r_2$ complex embeddings, and varying discriminant $\disc_K$ of magnitude $\adisc_K$. Then for all $x \geq 1$,
$$\sum_{k=1}^\infty \res_{s=-k}\frac{x^s}{s\zeta_K(s)}\ll \frac{(\log(x\adisc_K))^{n_K}}{x\adisc_K^{3/2}}.$$
\end{lem}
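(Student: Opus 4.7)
The plan is to apply \cref{lem:triv-zero-bound} termwise and demonstrate that the $k=1$ contribution dominates the entire series. By \cref{lem:triv-zero-bound}, there is a constant $c = c_{r_1, 2r_2}$ such that
\begin{equation*}
\sum_{k=1}^\infty \left| \res_{s=-k} \frac{x^s}{s\zeta_K(s)} \right| \;\leq\; c\sum_{k=1}^\infty \frac{(2\pi)^{k n_K}\bigl(\log(kx\adisc_K)\bigr)^{r_1+r_2}}{k\, x^k\, \adisc_K^{k+1/2}(k!)^{n_K}}.
\end{equation*}
Since the signature is fixed, Minkowski's lower bound yields $\adisc_K \gg_{r_1,r_2} 1$, so we may assume $\adisc_K$ is at least some absolute constant $> 1$. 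Because $r_1 + r_2 \leq n_K$ and $\log(x\adisc_K) \geq \log \adisc_K$ is bounded below by a positive constant depending on the signature, the inequality $(\log(kx\adisc_K))^{r_1+r_2} \ll_{r_1,r_2} (\log k + \log(x\adisc_K))^{n_K}$ will be the only log manipulation required.

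Next, I would isolate the $k=1$ term, which contributes
\begin{equation*}
\frac{c\,(2\pi)^{n_K}\bigl(\log(x\adisc_K)\bigr)^{r_1+r_2}}{x\,\adisc_K^{3/2}} \;\ll_{r_1,r_2}\; \frac{\bigl(\log(x\adisc_K)\bigr)^{n_K}}{x\,\adisc_K^{3/2}},
\end{equation*}
which is exactly the desired bound, with the constant $(2\pi)^{n_K}$ absorbed since $n_K = r_1 + 2r_2$ depends only on the signature.

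For the tail $k \geq 2$, I would estimate the ratio of the $(k+1)$st summand to the $k$th. Ignoring the benign log factor, this ratio is $\frac{(2\pi)^{n_K}}{(k+1)^{n_K}\,x\,\adisc_K}\cdot\frac{k}{k+1}$, which is bounded by a constant strictly less than $1$ once $\adisc_K$ exceeds a threshold depending only on $(r_1,r_2)$, since $x \geq 1$. Consequently the tail is controlled by the $k=2$ term:
\begin{equation*}
\sum_{k=2}^\infty (\,\cdots) \;\ll_{r_1,r_2}\; \frac{\bigl(\log(x\adisc_K)\bigr)^{n_K}}{x^2\,\adisc_K^{5/2}} \;\leq\; \frac{\bigl(\log(x\adisc_K)\bigr)^{n_K}}{x\,\adisc_K^{3/2}},
\end{equation*}
where the final inequality uses $x \geq 1$ and $\adisc_K \geq 1$. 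Combining the $k=1$ and tail contributions gives the claimed bound. The argument is essentially bookkeeping; the only mild obstacle is absorbing the factors $\log k$ into the log factor $\log(x\adisc_K)$ uniformly in $k$, but this is automatic given the rapid decay of $1/(k!)^{n_K}$ against the slow growth of $(\log k)^{n_K}$.
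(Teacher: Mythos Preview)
Your proposal is correct and follows exactly the approach the paper takes; the paper's own proof is the single sentence ``Apply \cref{lem:triv-zero-bound} and sum the results,'' and you have simply spelled out the summation carefully, isolating the dominant $k=1$ term and controlling the tail via the rapid decay of $(k!)^{-n_K}$.
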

\begin{proof} Apply \cref{lem:triv-zero-bound} and sum the results.
\end{proof}

The remainder of this section will involve bounding the residue at $s=0$. In our application, we are taking $x = 1$, and it is easy to see that
\begin{equation*}
    \res_{s = 0} \frac{x^s}{s\zeta_K(s)} \bigg|_{x = 1} = \res_{s = 0} \frac{1}{s\zeta_K(s)}.
\end{equation*}
We first give a result about the Laurent series coefficients of $\zeta_K'(s)/\zeta_K(s)$ at $s = 1$. 

\begin{lem}\label{lem:high-deriv-log-deriv} Let $K$ be a number field of fixed degree, and assume the Riemann hypothesis for $\zeta_K(s)$. Then, for a fixed integer $j\geq 0$,
$$\lim_{s\to 1}\frac{d^j}{ds^j}\left(\frac1{s-1}+\frac{\zeta_K'(s)}{\zeta_K(s)}\right)\ll(\log\log\adisc_K)^{j+1}$$
\end{lem}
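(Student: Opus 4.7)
The plan is to identify the quantity as a generalized Euler--Kronecker constant, express it as an integral involving the Chebyshev-type function $\psi_K(t) = \sum_{N(\mathfrak{a}) \leq t}\Lambda_K(\mathfrak{a})$, then split this integral at a cutoff $X = (\log\adisc_K)^A$ for a suitable fixed $A = A(j)$. The Riemann hypothesis is used only to control the tail, while the inner region is handled by the trivial Chebyshev-type bound $\psi_K(t) \ll_{n_K} t$.

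To derive the integral representation, Abel summation on the Dirichlet series $-\zeta_K'(s)/\zeta_K(s) = \sum_\mathfrak{a}\Lambda_K(\mathfrak{a})N(\mathfrak{a})^{-s}$ gives $-\zeta_K'(s)/\zeta_K(s) = s\int_1^\infty \psi_K(t)\,t^{-s-1}\,dt$ for $\Re(s) > 1$, and since $s/(s-1) = s\int_1^\infty t\cdot t^{-s-1}\,dt$, subtracting yields
\begin{equation*}
\frac{1}{s-1} + \frac{\zeta_K'(s)}{\zeta_K(s)} = -1 - s\int_1^\infty (\psi_K(t) - t)\,t^{-s-1}\,dt.
\end{equation*}
Under the Riemann hypothesis, the explicit formula for $\psi_K$ (obtained in exactly the same manner as \cref{lem:explicit-formula}) yields $|\psi_K(t) - t| \ll t^{1/2}\log^2(t\adisc_K)$, so the integral on the right converges absolutely for $\Re(s) > 1/2$, analytically continuing the identity and permitting differentiation under the integral sign. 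A short application of the Leibniz rule gives, for $j \geq 1$,
\begin{equation*}
\left.\frac{d^j}{ds^j}\!\left(\frac{1}{s-1} + \frac{\zeta_K'(s)}{\zeta_K(s)}\right)\right|_{s=1} = (-1)^{j+1}\int_1^\infty \frac{(\psi_K(t) - t)(\log t)^{j-1}(\log t - j)}{t^2}\,dt,
\end{equation*}
while for $j = 0$ one recovers Ihara's formula $\gamma_K = -1 - \int_1^\infty (\psi_K(t) - t)t^{-2}\,dt$ for the Euler--Kronecker constant.

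Fixing $A = 2(j+2)$ and setting $X = (\log\adisc_K)^A$, I would bound the tail using the RH estimate:
\begin{equation*}
\int_X^\infty \frac{|\psi_K(t) - t|(\log t)^j}{t^2}\,dt \ll \int_X^\infty \frac{(\log(t\adisc_K))^{j+2}}{t^{3/2}}\,dt \ll X^{-1/2}(\log\adisc_K)^{j+2} = O(1),
\end{equation*}
and bound the inner region via the Chebyshev bound $\psi_K(t) \leq n_K\psi(t) \ll_{n_K} t$ to get
\begin{equation*}
\int_1^X \frac{|\psi_K(t) - t|(\log t)^j}{t^2}\,dt \ll_{n_K} \int_1^X \frac{(\log t)^j}{t}\,dt \ll_j (\log X)^{j+1} \ll_j (\log\log\adisc_K)^{j+1}.
\end{equation*}
Combining these (and noting the constant $-1$ in the $j=0$ case is harmless) yields the desired bound.

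The main obstacle is not any individual estimate but the careful bookkeeping leading to the integral representation of the higher generalized Euler--Kronecker constants: one must verify that the Leibniz-rule expansion of $\frac{d^j}{ds^j}[s\int(\psi_K(t)-t)t^{-s-1}\,dt]$ collapses into a single weighted integral against $\psi_K(t) - t$, and that the RH-based bound for $\psi_K(t) - t$ has \emph{polynomial} (rather than merely finite) dependence on $\log\adisc_K$ so that the polylogarithmic cutoff $X = (\log\adisc_K)^{A}$ is permitted. Once the representation is in place, the essential structural point is that the inner region only requires the trivial Chebyshev bound, so the final bound $(\log\log\adisc_K)^{j+1}$ (rather than the much weaker $\log\adisc_K$ a naive zero-sum estimate would give) arises purely from the polylogarithmic scale of the truncation.
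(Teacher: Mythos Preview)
Your argument is correct and takes a genuinely different route from the paper's. The paper works with a Riesz-smoothed explicit formula for $\sum_{n\leq xy} w(n)\Lambda^K(n)n^{-s}$ (as in the proof of \cite[Theorem~13.13]{MV}): it bounds the sum over zeros to show that $\zeta_K'/\zeta_K$ differs from the truncated weighted sum plus an explicit pole term by an $O(1)$ function on $|s-1|\leq\tfrac14$, then extracts the Taylor coefficients $a_j$ via Cauchy's integral formula. You instead pass through Ihara's integral representation in terms of $\psi_K(t)-t$, differentiate under the integral, and split at $X=(\log\adisc_K)^{A}$. Both proofs share the same skeleton---truncate at a polylogarithmic height, use RH to make the tail $O(1)$, and invoke the Chebyshev bound $\psi_K(t)\leq n_K\psi(t)\ll_{n_K}t$ on the short range to produce $(\log\log\adisc_K)^{j+1}$---but your execution is more elementary: it avoids the Riesz weight, the complex-analytic coefficient extraction, and the bookkeeping of the Taylor expansion of $\frac{(xy)^{1-s}-x^{1-s}}{(1-s)^2\log y}$. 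The paper's approach, on the other hand, recycles exactly the machinery already set up for \cref{lem:MV-13.13}, so it fits more seamlessly into the surrounding section. One small remark: the exponent $2$ in your RH bound $|\psi_K(t)-t|\ll t^{1/2}(\log(t\adisc_K))^2$ is not critical---any fixed power would do, since you are free to enlarge $A$---so the argument is robust to whichever form of the explicit-formula error you quote.
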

\begin{proof} We mimic the proof of \cite[Theorem 13.13]{MV}. Fix some real $x,y\geq 2$, to be chosen later. Define $\Lambda^K(n)$ to be the coefficient of $n^{-s}$ in the Dirichlet series expansion of $\zeta_K'(s)/\zeta_K(s)$, and define
$$w(u)=w(x, y; u) \coloneqq \begin{cases}
	1 & 1 \leq u \leq x \\
	1 - \frac{\log u/x}{\log y} & x \leq u \leq xy\\
	0 & u \geq xy.
\end{cases}$$
By Riesz summation (\hspace{1sp}\cite[(5.20)]{MV}; cf. \cite[Theorem 13.13]{MV}), we have
\begin{equation}\label{eq:explicit-w-sum}
\sum_{n\leq xy}\frac{w(n)\Lambda^K(n)}{n^s}=-\frac{\zeta_K'(s)}{\zeta_K(s)}+\frac{(xy)^{1-s}-x^{1-s}}{(1-s)^2\log y}-\sum_\rho\frac{(xy)^{\rho-s}-x^{\rho-s}}{(\rho-s)^2\log y},
\end{equation}
where the sum runs over all zeros of $\zeta_K(s)$, both trivial and nontrivial, and a zero of multiplicity $m$ appears $m$ times. Now, as the nontrivial zeros occur only at non-positive integers and with multiplicity bounded by $n_K$
\begin{equation}\label{eq:trivial-zero-sum}
\left|\sum_{\rho\text{ trivial}}\frac{(xy)^{\rho-s}-x^{\rho-s}}{(\rho-s)^2\log y}\right|\leq n_K\sum_{k=0}^\infty \left|\frac{(xy)^{-k-s}-x^{-k-s}}{(k+s)^2\log y}\right|\ll \frac{2x^{-3/4}}{\log y},
\end{equation}
where the last bound follows from $|s-1|~\leq \frac{1}{4}$. Also,
\begin{align*}
\left|\sum_{\rho\text{ nontrivial}}\frac{(xy)^{\rho-s}-x^{\rho-s}}{(\rho-s)^2\log y}\right|
&\leq 2\sum_{T=0}^\infty \sum_{T\leq \gamma<T+1}\frac{x^{1/2-\sigma}\left|y^{\rho-s}-1\right|}{\left|\frac12-\sigma+i(\gamma-t)\right|^2\log y}\\
&\leq \frac{4x^{-1/4}}{\log y}\sum_{T=0}^\infty \frac{N(T+1)-N(T)}{\frac1{16}+T^2}\ll \frac{x^{-1/4}\log\adisc_K}{\log y},
\end{align*}
where we have used \cref{cor:zero-count} in the last simplification. As a result,
$$f(s)\coloneqq \frac{\zeta_K'(s)}{\zeta_K(s)}-\left[-\sum_{n\leq xy}\frac{w(n)\Lambda^K(n)}{n^s}+\frac{(xy)^{1-s}-x^{1-s}}{(1-s)^2\log y}\right]\ll O\left(\frac{x^{-1/4}\log\adisc_K}{\log y}\right)$$
for $|s-1|~\leq \frac{1}{4}$. Now, the Taylor series expansion
$$\frac{\zeta_K'(s)}{\zeta_K(s)}=-\frac1{s-1}+\sum_{j=0}^\infty \frac{a_j(s-1)^j}{j!\!},\qquad a_j=\lim_{s\to 1}\frac{d^j}{ds^j}\left(\frac1{s-1}+\frac{\zeta_K'(s)}{\zeta_K(s)}\right)$$
converges for $|s-1|~<\frac{1}{2}$, since $\zeta_K(s)$ has no zeros strictly within distance $\frac{1}{2}$ of $s=1$. Furthermore, we have the expansions
$$\sum_{n\leq xy}\frac{w(n)\Lambda^K(n)}{n^s}=\sum_{j=0}^\infty \frac{(s-1)^j}{j!\!}\sum_{n\leq xy}\frac{w(n)\Lambda^K(n)(-\log n)^j}{n}$$
and
$$\frac{(xy)^{1-s}-x^{1-s}}{(1-s)^2\log y}=-\frac1{s-1}+\sum_{j=0}^\infty \frac{(s-1)^j}{j!\!}\left(\frac{(-1)^j\left(\log(xy)^{j+2}-(\log x)^{j+2}\right)}{(j+1)(j+2)\log y}\right).$$
Now, select $x=(\log\adisc_K)^4$ and $y=e$, so that $f(s)=O(1)$. Writing
$$f(s)=\sum_{j=0}^\infty \frac{b_j(s-1)^j}{j!\!},$$
we have
$$b_j=j!\! \oint_{\Gamma_r} f(s)(s-1)^{-j-1}ds=\frac1{2\pi}\int_0^{2\pi}f\left(1+re^{i\theta}\right)r^{-j-1}e^{-i(j+1)\theta}d\theta$$
for any $r\leq \frac{1}{4}$, where $\Gamma_r$ is a counterclockwise circular contour of radius $r$ around $1$. For fixed $j$ and $r$, bounding the integrand in the second integral gives $b_j=O(1)$. So, recalling our choices of $x$ and $y$ and using partial summation gives
$$a_j=\sum_{n\leq xy}\frac{w(n)\Lambda^K(n)(-\log n)^j}{n}+\frac{(-1)^j\left(\log(xy)^{j+2}-(\log x)^{j+2}\right)}{(j+1)(j+2)\log y}+O(1)\ll(\log\log\adisc_K)^{j+1}.$$
This completes the proof. 
\end{proof}

We are now ready to bound the residue at $0$.

\begin{lem}\label{lem:residue-at-zero} In the same setting as \cref{lem:residues-negative-ints-bounds},
$$-\res_{s=0}\frac1{s\zeta_K(s)}=\frac{2^{r_1+r_2}\pi^{r_2}(\log\adisc_K)^{r_1+r_2-1}}{\adisc_K^{1/2}\lim_{s\to 1}{(s-1)\zeta_K(s)}}(1+o(1))\gg_{r_1,r_2} \frac{(\log\adisc_K)^{-r_2}}{D_K^{1/2}}.$$
\end{lem}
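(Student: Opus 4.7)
The plan is to exploit the functional equation from \cref{thm:fe} to transfer the local behavior of $\zeta_K(s)$ near $s=0$ into Taylor data near $s=1$, and then to extract the dominant asymptotic by invoking \cref{lem:high-deriv-log-deriv}.

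Substituting $s\mapsto 1-s$ in \cref{thm:fe} and using $\Gamma(s)=\Gamma(s+1)/s$ to expose the $(r_1+r_2)$-fold zero arising from the archimedean factors, one computes (after simplification using $n_K=r_1+2r_2$, so that $\bigl(\adisc_K/(2\pi)^{n_K}\bigr)^{1/2}(\pi/2)^{r_1/2}=\adisc_K^{1/2}/(2^{r_1}(2\pi)^{r_2})$) that
\begin{equation*}
\zeta_K(s) = -\frac{\adisc_K^{1/2}}{2^{r_1}(2\pi)^{r_2}}\,s^{r_1+r_2-1}\,\zeta_K^{\ast}(1-s)\,H(s),
\end{equation*}
where $\zeta_K^{\ast}(u)\coloneqq (u-1)\zeta_K(u)$ is entire with $\zeta_K^{\ast}(1)=\kappa_K\coloneqq\lim_{s\to 1}(s-1)\zeta_K(s)$, and
\begin{equation*}
H(s) = \Bigl(\frac{\adisc_K}{(2\pi)^{n_K}}\Bigr)^{-s}\cdot \frac{\Gamma(1-s)^{r_2}}{\cos^{r_1}(\pi s/2)\,\Gamma(s+1)^{r_1+r_2}}
\end{equation*}
satisfies $H(0)=1$. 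Setting $P(s)\coloneqq \zeta_K^{\ast}(1-s)H(s)$, which is analytic at $s=0$ with $P(0)=\kappa_K$, the residue reads
\begin{equation*}
-\res_{s=0}\frac{1}{s\zeta_K(s)} = \frac{2^{r_1+r_2}\pi^{r_2}}{\adisc_K^{1/2}\,\kappa_K}\,q_{r_1+r_2-1},
\end{equation*}
where $q_n$ denotes the $n$-th Taylor coefficient of $\kappa_K/P(s)$ at $s=0$.

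The heart of the argument is to pin down the leading $\adisc_K$-behavior of $q_{r_1+r_2-1}$. I would factor $H(s)=e^{-Ls}\tilde H(s)$ with $L\coloneqq \log(\adisc_K/(2\pi)^{n_K})\sim\log\adisc_K$ and $\tilde H(s)$ depending only on $(r_1,r_2)$, and then write $\kappa_K/P(s)=e^{Ls}\,S(s)$ for $S(s)=\kappa_K/(\zeta_K^{\ast}(1-s)\tilde H(s))$ with $S(0)=1$. To bound the coefficients of $S$, I would integrate the Laurent expansion $\zeta_K'(s)/\zeta_K(s)+1/(s-1)=\sum_{j\geq 0}a_j^{\ast}(s-1)^j/j!$ to get $\log(\zeta_K^{\ast}(u)/\kappa_K)=\sum_{k\geq 1}a_{k-1}^{\ast}(u-1)^k/k!$, at which point \cref{lem:high-deriv-log-deriv} furnishes $a_j^{\ast}\ll(\log\log\adisc_K)^{j+1}$. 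A Fa\`a di Bruno-type expansion of the exponential then yields $[\zeta_K^{\ast}(1-s)/\kappa_K]_k\ll_{r_1,r_2,k}(\log\log\adisc_K)^k$, and a similar bound with signature-dependent constants holds for $1/\tilde H$, so $[S]_k\ll_{r_1,r_2,k}(\log\log\adisc_K)^k$. Expanding $q_n=\sum_{j+k=n}(L^j/j!)[S]_k$, only the term $(j,k)=(n,0)$ survives to leading order, giving
\begin{equation*}
q_{r_1+r_2-1} = \frac{(\log\adisc_K)^{r_1+r_2-1}}{(r_1+r_2-1)!\,}(1+o(1)),
\end{equation*}
which, upon substitution, produces the stated asymptotic (with the fixed constant $(r_1+r_2-1)!$ absorbed into the fixed-signature constant).

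For the lower bound, I would invoke the standard estimate $\kappa_K\ll_{n_K}(\log\adisc_K)^{n_K-1}=(\log\adisc_K)^{r_1+2r_2-1}$ (a consequence of the analytic class number formula combined with a Minkowski/Landau-style bound on $\zeta_K(s)$ near the $1$-line), which combined with the asymptotic yields
\begin{equation*}
-\res_{s=0}\frac{1}{s\zeta_K(s)} \gg_{r_1,r_2} \frac{(\log\adisc_K)^{r_1+r_2-1}}{\adisc_K^{1/2}(\log\adisc_K)^{r_1+2r_2-1}} = \frac{(\log\adisc_K)^{-r_2}}{\adisc_K^{1/2}}.
\end{equation*}
The main technical hurdle is uniform control on the Taylor coefficients of $\zeta_K^{\ast}(1-s)/\kappa_K$ at $s=0$, namely that they grow at most as powers of $\log\log\adisc_K$; this is precisely the content of \cref{lem:high-deriv-log-deriv}, and it cleanly isolates the $e^{-Ls}$-driven contribution as the sole source of $\log\adisc_K$-growth in $q_{r_1+r_2-1}$.
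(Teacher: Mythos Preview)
Your proposal is correct and follows essentially the same route as the paper: both use the functional equation to transfer to $s=1$, isolate the factor $(\adisc_K/(2\pi)^{n_K})^{-s}$ as the sole source of the $(\log\adisc_K)^{r_1+r_2-1}$ growth, invoke \cref{lem:high-deriv-log-deriv} to show that all other contributions are $O((\log\log\adisc_K)^k)$, and then apply the Louboutin-type bound $\kappa_K\ll(\log\adisc_K)^{n_K-1}$ for the final inequality. The only cosmetic difference is that the paper works with $f(s)=s^{r_1+r_2-1}/\zeta_K(s)$ and its logarithmic derivative (expressing $f^{(j)}=f\cdot P_j(g',\dots,g^{(j)})$ for Bell-type polynomials $P_j$), whereas you factor out $e^{Ls}$ explicitly and bound Taylor coefficients of the remaining piece directly; these are the same computation in two notations. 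Your remark about the factorial $(r_1+r_2-1)!$ being absorbed into the signature-dependent constant is a fair observation.
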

\begin{proof} Write
$$\res_{s=0}\frac1{s\zeta_K(s)}=\lim_{s\to 0}\frac{d^{r_1+r_2-1}}{ds^{r_1+r_2-1}}\left(\frac{s^{r_1+r_2-1}}{\zeta_K(s)}\right).$$
Using the functional equation, we may write
$$\frac{s^{r_1+r_2-1}}{\zeta_K(s)}=\left(\frac{2^{r_2}\pi^{n_K}}{\adisc_K^{1/2}}\right)\left(\frac{\adisc_K}{(2\pi)^{n_K}}\right)^s\left(\frac1{s\zeta_K(1-s)}\right)\left(\frac{s}{\sin\frac{\pi s}2}\right)^{r_1}\left(\frac s{\sin\pi s}\right)^{r_2}\left(\frac1{\Gamma(1-s)}\right)^{n_K}.$$
Define
\begin{align*}
f(s) &=\frac{s^{r_1+r_2-1}}{\zeta_K(s)}\\
g_1(s)&=s\log\left(\frac{\adisc_K}{(2\pi)^{n_K}}\right)\\
g_2(s)&=\log(s\zeta_K(1-s))\\
g_3(s)&=\log s-\log\sin\frac{\pi s}2\\
g_4(s)&=\log s-\log\sin\pi s\\
g_5(s)&=\log\Gamma(1-s)
\end{align*}
so that
$$g(s)\coloneqq\log f(s)=\log\left(\frac{2^{r_2}\pi^{n_K}}{\adisc_K^{1/2}}\right)+g_1(s)-g_2(s)+r_1g_3(s)+r_2g_4(s)-n_Kg_5(s).$$
Since $f'(s)=g'(s)f(s)$, we have via induction on $j$ that
$$f^{(j)}(s)=f(s)P_j\left(g^{(1)}(s),g^{(2)}(s),\dots,g^{(j)}(s)\right)$$
for some polynomial $P_j(t_1,\dots,t_j)$ in $j$ variables with positive integral coefficients, which is homogeneous of degree $j$ when $t_i$ is assigned degree $i$. In addition, the coefficient of $t_1^j$ in $P_j(t_1,\dots,t_j)$ is $1$. We now bound the derivatives of $g$ at $1$.

For each $1\leq j\leq r_1+r_2-1$, the derivatives $g_i^{(j)}(s)$ for $i\in\{3,4,5\}$ near $s=0$ depend only on $r_1$ and $r_2$, so we can treat the terms corresponding to these $g_i$ as constants. This gives that, for $j\geq 1$,
$$g^{(j)}(s)=-g_2^{(j)}(s)+O(1)+\begin{cases}\log\adisc_K&j=1\\0&j> 1.\end{cases}$$
Now, note that
$$g_2^{(j)}(s)=\frac{d^{j-1}}{ds^{j-1}}\left(\frac1s-\frac{\zeta_K'(1-s)}{\zeta_K(1-s)}\right),$$
so \cref{lem:high-deriv-log-deriv} shows that $g_2^{(j)}(0)\ll(\log\log\adisc_K)^{j}$ for all $j\geq 1$. This implies that, using the properties of $P_j$,
$$\lim_{s\to 0}P_{r_1+r_2-1}\left(g^{(1)}(s),g^{(2)}(s),\dots,g^{(r_1+r_2-1)}(s)\right)=(\log \adisc_K)^{r_1+r_2-1}(1+o(1)).$$
Also,
$$\lim_{s\to 0}f(s)=\left(\frac{2^{r_2}\pi^{n_K}}{\adisc_K^{1/2}}\right)\left(\lim_{s\to 0}\frac1{s\zeta_K(1-s)}\right)\left(\frac2\pi\right)^{r_1}\left(\frac1\pi\right)^{r_2}=-\frac{2^{r_1+r_2}\pi^{r_2}}{\adisc_K^{1/2}}\lim_{s\to 1}\frac1{(s-1)\zeta_K(s)}.$$
By a theorem of Louboutin \cite[Theorem 1]{LOUBOUTIN2000263},
$$\lim_{s\to 1}(s-1)\zeta_K(s)\ll (\log \adisc_K)^{n_K-1},$$
and is clearly positive. This gives the desired result.
\end{proof}

Using this bound, we finish the proof that $M_K^*(1)>0$ for all sufficiently large $D_K$.

\begin{proof}[Proof of \cref{thm:mertens-genfields}]
By \cref{lem:residues-negative-ints-bounds},
$$M_K^*(1)=-\res_{s=0}\frac1{s\zeta_K(s)}+O\left(\frac{(\log \adisc_K)^{n_K}}{\adisc_K^{3/2}}\right).$$
Now, \cref{lem:residue-at-zero} implies that the first term is positive, and that it grows at least as quickly as 
$$\frac{(\log\adisc_K)^{-r_2}}{\adisc_K^{1/2}}.$$
For large $\adisc_K$ depending on $r_1$ and $r_2$, this outpaces the other terms, and so $M_K^*(1)>0$. An application of \cref{lem:when-failure} finishes the proof.
\end{proof}

\section{Proof of Theorem \ref{thm:limiting-distribution-abelian}}\label{sec:limdist}

In this section, we fix a number field $K$ whose Dedekind zeta function satisfies the Riemann hypothesis and $J_{-1}^K(T) \ll_{\alpha} T^{1+\alpha}$ for some $0 \leq \alpha < 2 - \sqrt{3}$. We expect these hypotheses to hold for all $K$ with $K/\QQ$ an abelian Galois extension, and provide support for the second condition later in this section.

Ng~\cite{Ng} demonstrates the existence of a limiting distribution for $e^{-y/2}M(e^y)$ by achieving bounds for the error term $E(x,T)$ in the truncated explicit formula
$$M(x) = \sum_{|\gamma| \leq T} \frac{x^\rho}{\rho \zeta'(\rho)} + E(x,T)$$
for $M(x)$. He then follows the method of Rubinstein--Sarnak~\cite{RS} to prove the existence of a limiting distribution by first constructing measures for the main term at height $T$, then bounding the error term $E(x,T)$ to show that they converge to the desired probability measure.

Later work of Akbary--Ng--Shahabi in \cite{ANS} allows for substantial generalization of this method by proving that a more general class of functions of interest possess limiting distributions; we will use their framework in our proof. First, it requires the definition of a rather broad class of almost periodic functions.
\begin{defn}
Denote by $\mathbb {T}$ the class of all real-valued trigonometric polynomials
\begin{equation*}
    P_N(y) = \sum_{n=1}^N r_n e^{i \lambda_n y}, \quad y \in \mathbb{R}
\end{equation*}
where $r_n \in \mathbb{C}$ and $\lambda_n \in \mathbb{R}$. We say that a locally square-integrable function $\phi \in L_{\text{loc}}^2([0,\infty))$ is $B^2$\textit{-almost periodic} if for any $\epsilon > 0$, there is a function $f_\epsilon(y) \in \mathbb {T}$ such that
\begin{equation*}
    \lVert \phi(y) - f_\epsilon(y) \rVert_{B^2} < \epsilon,
\end{equation*}
where
\begin{equation*}
    \lVert \phi \rVert_{B^2} \coloneqq \left( \limsup_{Y \rightarrow \infty} \frac{1}{Y} \int_0^Y |\phi(y)|^2 \, dy \right)^{1/2}.
\end{equation*}
\end{defn}

All $B^2$-almost periodic functions possess a limiting distribution \cite[Theorem~2.9]{ANS}.~This allows for concrete descriptions of almost periodic functions in the line of Rubinstein--Sarnak, as well as the later Mertens-specific work of Ng. 

\begin{thm}[Akbary--Ng--Shahabi~\protect{\cite[Corollary~1.3]{ANS}}]\label{thm:ANS}
Let $\phi:[0,\infty) \rightarrow \mathbb{R}$ be a scalar-valued function and let $y_0$ be such that $\phi$ is square-integrable on $[0,y_0]$. Assume there exists a non-decreasing sequence of positive numbers $\{\lambda_n\}_{n \in \mathbb{N}}$ which tends to infinity, a sequence of complex numbers $\{r_n\}_{n \in \mathbb{N}}$ and a real constant $c$ such that for $y \geq y_0$,
\begin{equation}\label{eq:phi-def}
    \phi(y) = c + \Real\left( \sum_{\lambda_n \leq X} r_n e^{i\lambda_n y} \right) + \mathcal{E}(y,X)
\end{equation}
for any $X \geq X_0 > 0$, where $\mathcal{E}(y,X)$ satisfies
\begin{equation}\label{eq:error-bound}
    \lim_{Y \rightarrow \infty} \frac{1}{Y} \int_{y_0}^Y |\mathcal{E}(y,e^Y)|^2 \, dy = 0.
\end{equation}
Assume that
\begin{equation}\label{eq:zero-count-one}
        \sum_{T < \lambda_n \leq T+1} 1 \ll \log T,
\end{equation}
and that either
\begin{enumerate}[label=(\alph*), font=\normalfont]
    \item $r_n \ll \lambda_n^{-\beta}$ for some $\beta > \frac{1}{2}$, or that
    \item for some $0\leq \theta<3-\sqrt 3$,
    \begin{equation}\label{eq:coefs-small}
        \sum_{\lambda_n \leq T} \lambda_n^2 |r_n|^2 \ll T^\theta.
    \end{equation}
\end{enumerate}
Then $\phi(y)$ is a $B^2$-almost periodic function and therefore possesses a limiting distribution.
\end{thm}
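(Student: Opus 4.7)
The approach is to verify that $\phi$ is $B^2$-almost-periodic directly, since every $B^2$-almost-periodic function possesses a limiting distribution by standard Bohr--Besicovitch theory (this is \cite[Theorem 2.9]{ANS}). The natural trigonometric-polynomial approximants are
\[
P_X(y) \coloneqq c + \Real\left(\sum_{\lambda_n \leq X} r_n e^{i\lambda_n y}\right) \in \mathbb{T},
\]
and the task reduces to establishing that $\lVert \phi - P_X\rVert_{B^2} \to 0$ as $X \to \infty$.

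For any $X \geq X_0$ and $Y$ with $e^Y > X$, two applications of the expansion~\eqref{eq:phi-def} yield
\[
\phi(y) - P_X(y) = \mathcal{E}(y, e^Y) + \Real\left(\sum_{X < \lambda_n \leq e^Y} r_n e^{i\lambda_n y}\right).
\]
Squaring, integrating over $[0,Y]$, dividing by $Y$, and taking $\limsup_{Y \to \infty}$, the first piece contributes nothing by hypothesis~\eqref{eq:error-bound}. For the second, expansion of the square produces a diagonal term $\sum_{X < \lambda_n \leq e^Y} |r_n|^2$ and an off-diagonal term $\sum_{m \neq n} r_m\overline{r_n} \frac{e^{i(\lambda_m - \lambda_n)Y} - 1}{i(\lambda_m - \lambda_n)}$; the latter is controlled by a Montgomery--Vaughan (Hilbert-type) inequality of the form $\ll \sum_n |r_n|^2/\delta_n$ with $\delta_n \coloneqq \min_{m \neq n} |\lambda_m - \lambda_n|$, which under the density bound~\eqref{eq:zero-count-one} is absorbed into a $Y^{-1}$-vanishing factor upon dividing by $Y$.

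It then suffices to show the tail bound $\sum_{\lambda_n > X} |r_n|^2 \to 0$. In case~(a), $|r_n|^2 \ll \lambda_n^{-2\beta}$ with $2\beta > 1$, combined with~\eqref{eq:zero-count-one}, forces $\sum_n |r_n|^2 < \infty$ and the tail vanishes. In case~(b), Abel summation applied to $S(T) \coloneqq \sum_{\lambda_n \leq T} \lambda_n^2|r_n|^2 \ll T^{\theta}$ gives
\[
\sum_{X < \lambda_n \leq e^Y} |r_n|^2 \ll X^{\theta - 2} + \int_X^{e^Y} t^{\theta - 3}\, dt \ll X^{\theta - 2},
\]
which tends to $0$ as $X \to \infty$ whenever $\theta < 2$. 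Combining yields $\lVert \phi - P_X\rVert_{B^2} \to 0$, whence $\phi$ is $B^2$-almost-periodic and therefore admits a limiting distribution.

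The main obstacle is case~(b). A naive invocation of Montgomery--Vaughan is too weak, since the hypothesis controls only the weighted moment $\sum_{\lambda_n \leq T}\lambda_n^2|r_n|^2$ rather than $\sum |r_n|^2$, so the off-diagonal factor $\sum_{X < \lambda_n \leq e^Y} |r_n|^2/\delta_n$ itself grows with a power of $e^Y$. The sharpened threshold $\theta < 3 - \sqrt{3}$ arises from interpolating carefully between the diagonal tail bound $X^{\theta - 2}$, the off-diagonal Hilbert-type bound (which carries its own $T^{\theta}$-scale dependence), and the truncation scale $e^Y$ in~\eqref{eq:error-bound}, with the critical exponent determined by solving a quadratic balance equation in $\theta$. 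This is the delicate part of the argument; after it, the conclusion is purely formal.
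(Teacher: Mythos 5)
The paper does not prove this statement: it is quoted verbatim (up to notation) from Akbary--Ng--Shahabi \cite[Corollary~1.3]{ANS}, so there is no in-paper proof against which to compare your argument. You are therefore being asked to reconstruct a proof from \cite{ANS}, and your proposal does capture the correct overall route (show $\lVert\phi-P_X\rVert_{B^2}\to 0$, then invoke \cite[Theorem~2.9]{ANS} that $B^2$-almost-periodic functions possess limiting distributions). Case (a) is in decent shape, modulo the issue below.

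The genuine gap is in case (b), and you flag it yourself without closing it. Your diagonal estimate $\sum_{\lambda_n>X}|r_n|^2\ll X^{\theta-2}$ already vanishes for all $\theta<2$, so if the diagonal were the only constraint the theorem would hold under the strictly weaker hypothesis $\theta<2$ --- contradicting the stated threshold $3-\sqrt3\approx1.27$. This means the off-diagonal contribution \emph{must} be where the restriction bites, yet your treatment of it has two problems. First, the density bound~\eqref{eq:zero-count-one} controls how many $\lambda_n$ land in a unit window but gives \emph{no} lower bound on the minimum gap $\delta_n=\min_{m\neq n}|\lambda_m-\lambda_n|$; the frequencies can cluster arbitrarily tightly, so the Montgomery--Vaughan bound $\sum_n|r_n|^2/\delta_n$ is not finite a priori and certainly is not ``absorbed into a $Y^{-1}$-vanishing factor'' by that hypothesis alone. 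Second, and more fundamentally, you assert that $\theta<3-\sqrt3$ emerges from ``solving a quadratic balance equation in $\theta$'' without writing the equation down, choosing an intermediate truncation height, or carrying out the optimization. That derivation is the entire content of case (b); deferring it leaves the proof incomplete. In \cite{ANS} this is handled via a more careful decomposition (intermediate truncation scale between $X$ and $e^Y$, with a Besicovitch/Hilbert-type estimate that exploits the window structure rather than a raw minimum-gap bound), and the optimization of the competing error terms in the truncation parameter is precisely what produces $3-\sqrt3$. To make your argument sound you would need to actually set up that optimization, or else directly cite the relevant intermediate lemmas of \cite{ANS} (their Theorems 1.9 and 1.14).
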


Therefore, to establish the existence of a logarithmic limiting distribution for $x^{-1/2}M_K(x)$, it will suffice to provide a suitable expression for
$$\phi_K(y)\coloneqq e^{-y/2}M_K\left(e^y\right)$$
as above with error small in the sense of \eqref{eq:error-bound}, as well as to verify that \eqref{eq:coefs-small} holds for some $\theta<3-\sqrt3$. Our first step will be to extend the explicit formula (\cref{lem:explicit-formula}) for $M(x)$ to all heights $T\geq 2$, assuming the analogue of the Gonek--Hejhal conjecture for $K$, as in \eqref{eq:gonek-hejhal-nfield}.
\begin{lem}\label{lem:explicit-formula-gh}
Assume the Riemann hypothesis for $\zeta_K(s)$ and that $J_{-1}^K(T)\ll_\alpha T^{1+\alpha}$.
For $x \geq 2$ and arbitrary $T \geq 2$,
\begin{equation}
    M_K(x) = \sum_{|\gamma| \leq T} \frac{x^{\rho}}{\rho \zeta_K'(\rho)} + E(x,T)
\end{equation}
where
\begin{equation*}
    E(x,T) \ll \frac{x^{1+C/\log\log x}\log x}T+\frac x{T^{1-\epsilon}}+x^{C/\log\log x}+\left(\frac{x\log T}{T^{1 - \alpha}}\right)^{1/2}
\end{equation*}
for any $\epsilon>0$ where the constant $C$ is as in \cref{lem:dedekind-coeff-bound}.
\end{lem}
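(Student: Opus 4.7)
The strategy is to reduce to the known explicit formula in \cref{lem:explicit-formula} by moving from an arbitrary height $T \geq 2$ to a nearby height $T^* \in \mathcal{T}$, and then to control the extra zeros in the short window $(T, T^*]$ via Cauchy--Schwarz together with the Gonek--Hejhal-type hypothesis \eqref{eq:gonek-hejhal-nfield}.

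First I would fix $T \geq 2$ and use \cref{lem:large-horiz-strip} to select $T^* \in \mathcal{T}$ with $T \leq T^* \leq T + 2$ (which exists since $\mathcal{T}$ contains a point in each interval $[n, n+1)$ for $n \geq 4$; the values $T$ near $2$ can be handled by absorbing them into the implicit constant). Applying \cref{lem:explicit-formula} at height $T^*$ and observing that the trivial-zero residue sum is $O((\log x)^{O(1)})$ for $x \geq 2$, we obtain
\begin{equation*}
    M_K(x) = \sum_{|\gamma| \leq T^*} \frac{x^{\rho}}{\rho \zeta_K'(\rho)} + O\!\left( \frac{x^{1+C/\log\log x} \log x}{T^*} + \frac{x}{(T^*)^{1-\epsilon}} + x^{C/\log\log x} \right).
\end{equation*}
Since $T^* \asymp T$ for $T \geq 2$, the error above is of the same form in $T$.

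Next, I would split
\begin{equation*}
    \sum_{|\gamma| \leq T^*} \frac{x^{\rho}}{\rho \zeta_K'(\rho)} = \sum_{|\gamma| \leq T} \frac{x^{\rho}}{\rho \zeta_K'(\rho)} + \sum_{T < |\gamma| \leq T^*} \frac{x^{\rho}}{\rho \zeta_K'(\rho)},
\end{equation*}
and bound the short-interval sum. Under the Riemann hypothesis for $\zeta_K(s)$, $|x^{\rho}| = x^{1/2}$, so by Cauchy--Schwarz
\begin{equation*}
    \left| \sum_{T < |\gamma| \leq T^*} \frac{x^{\rho}}{\rho \zeta_K'(\rho)} \right|^2
    \leq x \cdot \Bigl( \sum_{T < |\gamma| \leq T^*} \frac{1}{|\zeta_K'(\rho)|^2} \Bigr) \cdot \Bigl( \sum_{T < |\gamma| \leq T^*} \frac{1}{|\rho|^2} \Bigr).
\end{equation*}
The first factor is at most $J_{-1}^K(T^*) \ll T^{1+\alpha}$ by the hypothesis (using the symmetry of zeros under conjugation to handle $\gamma < 0$). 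For the second factor, $|\rho|^2 \geq \gamma^2 \gtrsim T^2$ in the range of summation, and by \cref{cor:zero-count} the number of zeros in $(T, T^*]$ is $O(\log T)$ since $T^* - T = O(1)$ and $\adisc_K$ is fixed. Thus the second factor is $O(T^{-2} \log T)$, and the entire short-interval sum is
\begin{equation*}
    \ll x^{1/2} \cdot \frac{(T^{1+\alpha} \log T)^{1/2}}{T} = \left( \frac{x \log T}{T^{1-\alpha}} \right)^{1/2},
\end{equation*}
which is precisely the new error term in the statement.

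The main (only) obstacle is ensuring the Cauchy--Schwarz bound matches the target, so it is crucial that the exponent $\alpha$ in the hypothesis \eqref{eq:gonek-hejhal-nfield} appears linearly in the final $T^{-(1-\alpha)/2}$ decay. Combining the contributions yields the stated bound for $E(x,T)$ at arbitrary $T \geq 2$, completing the proof.
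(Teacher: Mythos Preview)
Your proof is correct and follows essentially the same approach as the paper: both reduce to \cref{lem:explicit-formula} at a nearby height in $\mathcal{T}$, then control the short-interval zero sum via Cauchy--Schwarz combined with the hypothesis $J_{-1}^K(T)\ll T^{1+\alpha}$ and the zero-count bound \cref{cor:zero-count}. The only cosmetic differences are that the paper allows the auxiliary height $T_n$ to lie on either side of $T$ (treating the case $T\leq T_n$ by symmetry), and it factors the Cauchy--Schwarz as $\sum |\rho\zeta_K'(\rho)|^{-2}$ times $\sum 1$ rather than your $\sum |\zeta_K'(\rho)|^{-2}$ times $\sum |\rho|^{-2}$; since $|\rho|\asymp T$ in the short window, both factorizations give the same bound.
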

\begin{proof}
Suppose $T \geq 2$ satisfies $n \leq T \leq n+1$ for some positive integer $n$, and let $T_n$ be as in \cref{lem:large-horiz-strip}. Suppose without loss of generality that $n \leq T_n \leq T \leq n+1$. \cref{lem:explicit-formula} gives that
\begin{equation*}
    M_K(x) = \sum_{|\gamma| \leq T} \frac{x^{\rho}}{\rho \zeta_K'(\rho)} - \sum_{T_n \leq |\gamma| \leq T} \frac{x^{\rho}}{\rho \zeta_K'(\rho)} + E(x,T_n),
\end{equation*}
where
$$E(x,T_n)\ll \frac{x^{1+C/\log\log x}\log x}T+\frac x{T^{1-\epsilon}}+x^{C/\log\log x}.$$
We use the assumption $J_{-1}^K(T)\ll T^{1 + \alpha}$ to bound the second sum. The Cauchy--Schwarz inequality gives that, under the Riemann hypothesis,
\begin{equation*}
    \left| \sum_{T_n \leq |\gamma| \leq T} \frac{x^{\rho}}{\rho \zeta_K'(\rho)} \right| \leq x^{1/2} \left( \sum_{T_n \leq |\gamma| \leq T} \frac{1}{|\rho \zeta_K'(\rho)|^2} \right)^{1/2} \left( \sum_{T_n \leq |\gamma| \leq T} 1 \right)^{1/2}.
\end{equation*}
Since
$$\sum_{T_n\leq|\gamma|\leq T}\frac1{|\rho\zeta_K'(\rho)|^2}\ll \frac1{T^2}\sum_{T_n\leq|\gamma|\leq T}\frac1{|\zeta_K'(\rho)|^2}\ll \frac{J_{-1}^K(T)}{T^2}\ll T^{\alpha - 1},$$
and the number of zeros of $\zeta_K$ with ordinates between $T_n$ and $T$ is $\ll \log T$ by \cref{cor:zero-count},
$$\left| \sum_{T_n \leq |\gamma| \leq T} \frac{x^{\rho}}{\rho \zeta_K'(\rho)} \right| \ll \left( \frac{x \log T}{T^{1 - \alpha}} \right)^{1/2},$$
which is enough. The case $T \leq T_n$ is treated similarly; all that differs is the sign on the zeros with ordinates between $T$ and $T_n$.
\end{proof}

We are ready to establish the main theorem.

\begin{proof}[Proof of Theorem \ref{thm:limiting-distribution-abelian}]
Setting $x = e^y$ in \cref{lem:explicit-formula-gh}, where $y \geq e$, we may define
\begin{equation*}
    \phi_K(y) = e^{-\frac{y}{2}}M_K(e^y) = \sum_{|\gamma| \leq T} \frac{e^{i \gamma y}}{\rho \zeta_K'(\rho)} + \mathcal{E}(e^y,T)
\end{equation*}
where
\begin{equation*}
    \mathcal{E}(y,T) \coloneqq e^{-\frac{y}{2}}E(e^y,T) \ll \frac{e^{\frac{Cy}{\log y} + \frac{y}{2}} (\log y)}{T} + \frac{e^{\frac{y}{2}}}{T^{1-\epsilon}} + \frac{(\log T)^{1/2}}{T^{(1 - \alpha)/2}} + e^{\frac{Cy}{\log y} - \frac{y}{2}}. 
\end{equation*}
This is of the form required in \cref{thm:ANS}; if we order the nontrivial zeros of $\zeta_K$ nondecreasingly by magnitude of ordinate (both with positive and negative ordinate) as $\rho_n=\frac12+i\gamma_n$, then our definition of $\phi_K(y)$ agrees with \eqref{eq:phi-def} with
$$\lambda_n=\gamma_n,\ r_n=\frac1{\rho_n\zeta_K'(\rho_n)}.$$
As a result, we need only to verify \eqref{eq:error-bound}, \eqref{eq:zero-count-one} and \eqref{eq:coefs-small} for some $\theta<3-\sqrt 3$. The second of these is implied by \cref{cor:zero-count}, and the third holds for $\theta=1 + \alpha$ since
$$\sum_{\lambda_n \leq T} \lambda_n^2 |r_n|^2 = \sum_{|\gamma|\leq T} \frac{4\gamma^2}{|\rho \zeta_K'(\rho)|^2}\ll J_{-1}^K(T) \ll T^{1+\alpha},$$
using our assumption on $J_{-1}^K(T)$, having chosen $0 \leq \alpha < 2 - \sqrt{3} = 0.2679\dots$. Finally, to verify \eqref{eq:error-bound}, we first have, using the Cauchy--Schwarz inequality, that
\begin{equation*}
    \int_e^Y |\mathcal{E}(y,e^Y)|^2 \, dy \ll \int_e^Y \left( \frac{e^{\frac{2Cy}{\log y} + y} (\log y)^2}{e^{2Y}} + \frac{e^{y}}{e^{2Y(1-\epsilon)}} + \frac{Y}{e^{Y(1-\alpha)}} + e^{\frac{2Cy}{\log y} - y} \right) \, dy.
\end{equation*}
The integral of the third term is bounded as $Y \rightarrow \infty$ when $\alpha < 2 - \sqrt{3}$, and so is that of the second term when $\epsilon< 1/2$. Moreover, an application of L'H\^opital's rule shows that the first and fourth terms are $o(Y)$. This gives
$$\lim_{Y \rightarrow \infty} \frac{1}{Y} \int_e^Y |\mathcal{E}(y,e^Y)|^2 \, dy = 0,$$
as desired. We may conclude that $\phi_K(y)=e^{-y/2}M_K(e^y)$ possesses a limiting distribution by \cref{thm:ANS}.
\end{proof}

\subsection{Support for the conjecture \eqref{eq:gonek-hejhal-nfield}}
By the discussion following the statement of \cref{thm:limiting-distribution-abelian}, the picture is most transparent when $K/\mathbb{Q}$ is an abelian Galois extension. In this case, the irreducible characters on $\gal(K/\mathbb{Q})$ are all $1$-dimensional, and therefore by abelian reciprocity the functions appearing in the Artin factorization of $\zeta_K(s)$ are moreover distinct primitive Dirichlet $L$-functions, among them $\zeta(s)$. Label the functions $L_1(s),\dots,L_n(s)$, where $L_1(s) = \zeta(s)$, say, and $n = [K:\mathbb{Q}]$. We assume the generalized Riemann hypothesis for the Dirichlet $L$-functions, simplicity of all nontrivial zeros (a priori none at $s = \frac{1}{2}$), and that no two Dirichlet $L$-functions share a nontrivial zero. Let $k \in \mathbb{R}$. Then,
\begin{equation*}
    \sum_{\substack{0 < \gamma \leq T \\ \zeta_K(\frac{1}{2} + i\gamma) = 0}} |\zeta_K'(\rho)|^{2k} = \sum_{i=1}^n \sum_{\substack{0 < \gamma_i \leq T \\ L_i(\frac{1}{2} + i\gamma_i) = 0}} |L_i'(\rho)|^{2k} \,\prod_{j \neq i} |L_j(\rho)|^{2k},
\end{equation*}
where we maintain the convention $\rho = \frac{1}{2} + i\gamma_i$. 
Upon normalizing the $i$th summand by the factor $N_i(T) = \#\{0 \leq \gamma_i \leq T \mid L_i(\frac{1}{2} + i \gamma_i) = 0 \}$, we could anticipate that
\begin{align*}
   & \frac{1}{N_i(T)} \sum_{\substack{0 < \gamma_i \leq T \\ L_i(\frac{1}{2} + i\gamma_i) = 0}} |L_i'(\rho)|^{2k} \,\prod_{j \neq i} |L_j(\rho)|^{2k} \\ &\phantom{\,\,\,\,}\asymp \left( \frac{1}{N_i(T)} \sum_{\substack{0 < \gamma_i \leq T \\ L_i(\frac{1}{2} + i \gamma_i) = 0}} |L_i'(\rho)|^{2k} \right) \prod_{j \neq i} \left(\frac{1}{ N_i(T)} \sum_{\substack{0 < \gamma_i \leq T \\ L_i(\frac{1}{2} + i\gamma_i) = 0}} |L_j(\rho)|^{2k} \right).
\end{align*}
Noting its similarity to the sum $J_{k}(T)$, and recalling the known asymptotic $N_i(T) \asymp T(\log T)$ (see~\cite[Theorem 14.5]{MV}), we may posit that
\begin{equation*}
    \frac{1}{N_i(T)} \sum_{0 < \gamma_i \leq T} |L_i'(\rho)|^{2k}~\asymp (\log T)^{k(k+2)},
\end{equation*}
as in the case of the Riemann zeta function (cf.~\cite{Gonek1}). Meanwhile, to address the inner sums
\begin{equation*}
    \frac{1}{N_i(T)} \sum_{\substack{0 < \gamma_i \leq T \\ L_i(\frac{1}{2} + i\gamma_i) = 0}} |L_j(\rho)|^{2k},
\end{equation*}
where $j \neq i$, it would be most favorable to impose some restriction on the proximity of the zeros $\gamma_i$ of $L_i$ to lower values of the $L_j$; this is because we are taking $k = -1$. In this case, we expect that these normalized sums are $O_{\varepsilon}(T^\varepsilon)$ for all $\varepsilon > 0$. This would suggest that $J_{-1}^K(T) \ll_{\alpha} T^{1 + \alpha}$ for all $\alpha > 0$, which would be sufficient for \cref{thm:limiting-distribution-abelian}. In fact, we believe that $J_{-1}^K(T) \ll_K T$ holds for all abelian number fields $K$.

\section{Proofs of analytic results}\label{sec:details}

In this section, we give proofs of some of the technical statements that we have used in prior sections. 

\subsection{Bounds in horizontal strips}

In this section, we outline a proof of \cref{lem:large-horiz-strip}. This proof is heavily based on the proof of \cite[Theorem 13.22]{MV}, the corresponding statement for $K=\mathbb Q$. We will run through the intermediate statements needed for this result as it is proven in \cite{MV}, and in each step explain the necessary modifications in order to generalize to the setting of Dedekind zeta functions.

\begin{lem}[cf.~\protect{\cite[Corollary~10.5]{MV}}]\label{lem:MV-10.5}
Suppose $-1 \leq \sigma \leq \frac{1}{2}$. Then
$$\left \vert \frac{\zeta_K(\sigma + iT)}{\zeta_K(1 - \sigma - iT)} \right \vert \asymp \vert T \vert^{(1/2-\sigma)n_K^2}.$$
\end{lem}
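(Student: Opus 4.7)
The plan is to read off the estimate directly from the functional equation. Writing \cref{thm:fe} as $\zeta_K(1-s) = \zeta_K(s) F(s)$ with
\begin{equation*}
F(s) = \left(\frac{\adisc_K}{\pi^{n_K}2^{n_K}}\right)^{s-1/2}\left(\frac{\pi}{2}\right)^{r_1/2}\frac{\Gamma(s)^{r_2}}{(\sin\frac{\pi s}{2})^{r_1}\Gamma(1-s)^{r_1+r_2}},
\end{equation*}
setting $s = \sigma + iT$ gives $\zeta_K(\sigma+iT)/\zeta_K(1-\sigma-iT) = 1/F(\sigma+iT)$. So the lemma is equivalent to establishing $|F(\sigma+iT)| \asymp |T|^{n_K(\sigma-1/2)}$ uniformly for $\sigma \in [-1,1/2]$ as $|T| \to \infty$.

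To establish this, I would apply Stirling's formula $|\Gamma(\sigma+iT)| \asymp |T|^{\sigma-1/2}e^{-\pi|T|/2}$ (uniformly in bounded strips) to each of $\Gamma(s)^{r_2}$ and $\Gamma(1-s)^{r_1+r_2}$, and the elementary identity $|\sin(x+iy)|^2 = \sin^2 x + \sinh^2 y$ to see that $|\sin(\pi s/2)| \asymp e^{\pi|T|/2}$. The exponential contributions in $|F(\sigma+iT)|$ then assemble to $\exp\bigl(\pi|T|\bigl(-\tfrac{r_2}{2} + \tfrac{r_1+r_2}{2} - \tfrac{r_1}{2}\bigr)\bigr) = 1$, cancelling exactly — as they must, since $F$ has polynomial growth on horizontal strips. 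The polynomial contributions combine to $|T|^{r_2(\sigma-1/2) + (r_1+r_2)(\sigma-1/2)} = |T|^{(r_1+2r_2)(\sigma-1/2)} = |T|^{n_K(\sigma-1/2)}$, which closes the argument upon inversion.

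The main obstacle is purely bookkeeping: one must check that Stirling's asymptotic is uniform in the bounded strip $-1 \leq \sigma \leq 1/2$ (which is standard) and that the constants from each step combine into an implicit constant depending only on $(r_1, r_2, \adisc_K)$. I would finally remark that this computation yields the exponent $n_K(1/2 - \sigma)$ rather than the stated $n_K^2(1/2-\sigma)$; the two agree in the classical case $K = \QQ$ of \cite[Corollary~10.5]{MV}, and the correct exponent $n_K$ is also consistent with the conductor scaling of the completed zeta function $\Lambda_K(s) = \adisc_K^{s/2}\Gamma_{\mathbb{R}}(s)^{r_1}\Gamma_{\mathbb{C}}(s)^{r_2}\zeta_K(s)$, so the ``$n_K^2$'' in the statement appears to be a typo for $n_K$.
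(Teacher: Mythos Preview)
Your argument is correct and is exactly what the paper has in mind: its entire proof reads ``This follows from the reflection formula for Dedekind zeta functions and Stirling's approximation,'' and you have carried out precisely that computation. Your observation about the exponent is also correct---the functional equation and Stirling yield $n_K(1/2-\sigma)$, not $n_K^2(1/2-\sigma)$, so the stated exponent is a typo (harmless for the only application in the paper, which merely needs the ratio to be $\gg 1$ for $\sigma \leq \tfrac12$).
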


This follows from the reflection formula for Dedekind zeta functions and Stirling's approximation. By the Hadamard product for the completed Dedekind zeta function, one gets the following formula for the logarithmic derivative of $\zeta_K$.

\begin{lem}[cf.~\protect{\cite[Lemma~12.1]{MV}}]\label{lem:MV-12.1}
We have
$$\frac{\zeta_K'(s)}{\zeta_K(s)} = -\left(\frac{1}{s} + \frac{1}{s-1}\right)+ \sum_{\vert \gamma - t \vert \leq 1}\left(\frac{1}{\rho} + \frac{1}{s - \rho}\right) + O(\log\tau)$$
uniformly for $-1 \leq \sigma \leq 2$, where $\tau = |t| + 4$.
\end{lem}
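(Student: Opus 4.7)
The approach is to apply the Hadamard product to the completed Dedekind zeta function
$$\xi_K(s) \coloneqq s(s-1)\,\adisc_K^{s/2}\left(\pi^{-s/2}\Gamma(s/2)\right)^{r_1}\left((2\pi)^{-s}\Gamma(s)\right)^{r_2}\zeta_K(s),$$
which is entire of order $1$ with zeros exactly at the nontrivial $\rho$ of $\zeta_K(s)$. Writing $\xi_K(s) = e^{A+Bs}\prod_\rho(1-s/\rho)e^{s/\rho}$, taking logarithmic derivatives, and solving for $\zeta_K'/\zeta_K$ yields an identity of the shape
$$\frac{\zeta_K'(s)}{\zeta_K(s)} = -\frac{1}{s} - \frac{1}{s-1} + B - \tfrac{1}{2}\log\adisc_K - G_K(s) + \sum_\rho\left(\frac{1}{s-\rho} + \frac{1}{\rho}\right),$$
where $G_K(s)$ collects the logarithmic derivatives of the archimedean $\Gamma$-factors.

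To eliminate the constant $B$ and the only conditionally convergent sum $\sum_\rho 1/\rho$, the plan is to subtract the same identity evaluated at the reference point $s_0 = 2 + it$. On the line $\Real(s) = 2$, the Dirichlet series for $\zeta_K'/\zeta_K$ converges absolutely and is $O(1)$ uniformly in $t$. Stirling's asymptotic $\frac{\Gamma'}{\Gamma}(w) = \log w + O(|w|^{-1})$, valid in any sector bounded away from the negative real axis, then shows that $G_K(s) - G_K(s_0) \ll \log\tau$ uniformly for $-1\leq\sigma\leq 2$, so the identity reduces to
$$\frac{\zeta_K'(s)}{\zeta_K(s)} = -\frac{1}{s} - \frac{1}{s-1} + \sum_\rho\left(\frac{1}{s-\rho} - \frac{1}{s_0-\rho}\right) + O(\log\tau).$$

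All that remains is to analyze the sum over zeros, which I would split at $|\gamma - t| = 1$. For far zeros, a short calculation gives
$$\left|\frac{1}{s-\rho} - \frac{1}{s_0-\rho}\right| \ll \frac{1}{1+(\gamma-t)^2},$$
and grouping the zeros into unit strips and applying \cref{cor:zero-count} in the form $N_K(T+1) - N_K(T) \ll \log\adisc_K + n_K\log\tau$ bounds the total far-zero contribution by $O(\log\tau)$, with the $K$-dependent constants absorbed into the implicit constant. For near zeros, the term $1/(s_0 - \rho)$ is $O(1)$ since $\Real(s_0 - \rho) \geq 3/2$, and there are only $O(\log\tau)$ such zeros, so what survives is $\sum_{|\gamma-t|\leq 1} 1/(s-\rho)$; the $1/\rho$ summand in the stated form can be reinserted at a harmless cost of $O(\log\tau)$, since for the $O(\log\tau)$ near zeros we have $|1/\rho| \ll 1$.

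The main obstacle is mostly bookkeeping rather than genuine difficulty: one must verify that the archimedean $\Gamma$-factors contribute only $O(\log\tau)$ after subtraction via a careful application of Stirling (and boundedness on any compact subset of the strip near $s=0,1$), and one must track the $\log\adisc_K$ and $n_K$ dependence arising from the zero-count estimate. Since the lemma's implicit constant may depend on $K$, all of these field-dependent factors are harmless.
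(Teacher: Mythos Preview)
Your proposal is correct and follows exactly the approach the paper indicates: the paper gives no detailed proof here, merely stating that the formula ``follows from the Hadamard product for the completed Dedekind zeta function,'' and your sketch is precisely the standard Montgomery--Vaughan argument (Hadamard product, subtract at $s_0 = 2+it$, Stirling for the $\Gamma$-factors, split the zero sum at $|\gamma-t|=1$ and apply the zero-count) adapted to $\zeta_K$. One small correction: the trivial zeros of $\zeta_K$ in the strip $-1\leq\sigma\leq 2$ occur at $s=0$ and $s=-1$ (not $s=1$), so your parenthetical about ``boundedness near $s=0,1$'' should read $s=0,-1$; in any case the applications in the paper use $|t|\geq 1$, so this is harmless.
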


Define $\Lambda^K(n)$ to be, as in the proof of \cref{lem:high-deriv-log-deriv}, such that
$$\frac{\zeta_K'(s)}{\zeta_K(s)}=\sum_{n=1}^\infty \Lambda^K(n)n^{-s}.$$
We will use the bound
\begin{equation}\label{eq:lambda-K-bound}
0\leq \Lambda^K(n)\leq n_K\Lambda(n)
\end{equation}
for all positive integers $n$ in many of the following results. It may be proven by a simple consideration of the powers of prime ideals of $\mc O_K$ with given norm; for the upper bound, equality is reached for $n$ a power of a prime $p$ which splits completely in $K$. The first result needed for \cref{lem:large-horiz-strip} is the following.

\begin{lem}[cf.~\protect{\cite[Theorem~13.13]{MV}}]\label{lem:MV-13.13} Assume the Riemann hypothesis for $K$. Then
$$\left \vert \frac{\zeta_K'(s)}{\zeta_K(s)}\right \vert \leq \sum_{n \leq (\log \tau)^2} \frac{\Lambda^K(n)}{n^\sigma} + O((\log \tau)^{2 - 2\sigma})$$
uniformly for $\frac{1}{2} + \frac{1}{\log \log \tau} \leq \sigma \leq \frac{3}{2}$, $\vert t \vert ~\geq 1$.
\end{lem}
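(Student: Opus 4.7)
The plan is to generalize the proof of \cite[Theorem~13.13]{MV} to Dedekind zeta functions, using the explicit formula of \cref{lem:MV-12.1} together with the zero-count \cref{cor:zero-count} and the coefficient bound $\Lambda^K(n) \leq n_K\Lambda(n)$ from \eqref{eq:lambda-K-bound}. The key input from the Riemann hypothesis is that $|s - \rho| \geq \sigma - \tfrac{1}{2} \geq 1/\log\log\tau$ for every nontrivial zero $\rho$.

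I would begin from a Riesz summation identity of the form
$$-\frac{\zeta_K'(s)}{\zeta_K(s)} = \sum_{n \leq xy} \frac{w(n)\Lambda^K(n)}{n^s} - \frac{(xy)^{1-s} - x^{1-s}}{(1-s)^2 \log y} + \sum_\rho \frac{(xy)^{\rho - s} - x^{\rho - s}}{(\rho - s)^2 \log y},$$
exactly as employed in the proof of \cref{lem:high-deriv-log-deriv}, where $w$ is the trapezoidal weight and the rightmost sum runs over all zeros of $\zeta_K(s)$ with multiplicity. Setting $x = (\log\tau)^2$ and choosing a suitable $y \in (1, e)$, the weighted partial sum is bounded above by $\sum_{n \leq (\log\tau)^2} \Lambda^K(n)/n^\sigma$, plus a tail over $n \in ((\log\tau)^2, xy]$ which, via $\Lambda^K(n) \leq n_K \Lambda(n)$ and Chebyshev-type bounds, is absorbed into $O((\log\tau)^{2 - 2\sigma})$. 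The pole term contributes $O(x^{1-\sigma}/t^2)$, which is negligible since $|t| \geq 1$.

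It remains to estimate the zero-sum. The trivial zeros at the non-positive integers contribute $O(x^{-3/4}/\log y)$, exactly as in \cref{lem:high-deriv-log-deriv}. For the nontrivial zeros, RH gives $|x^{\rho - s}| = x^{1/2 - \sigma}$, while $|(xy)^{\rho - s} - x^{\rho - s}| \leq x^{1/2 - \sigma}\min(|\rho - s|\log y,\, 2)$. Splitting the zeros according to whether $|\gamma - t| \leq 1$ or $|\gamma - t| > 1$ and applying a dyadic decomposition combined with \cref{cor:zero-count} (each unit-length window containing $O(\log\tau)$ zeros, since $\log\adisc_K$ is absorbed into $\log\tau$ in our range), the zero-sum is bounded by $x^{1/2 - \sigma} \cdot O(\log\tau) = O((\log\tau)^{2 - 2\sigma})$ after optimizing the choice of $y$.

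The main obstacle is obtaining the error term $O((\log\tau)^{2-2\sigma})$ uniformly over $\sigma \geq \tfrac{1}{2} + 1/\log\log\tau$ without picking up spurious $\log\log\tau$ factors from the zeros very close to $s$. This forces a careful balance of the two regimes $|\rho - s|\log y \lessgtr 1$ in the zero-sum, achieved by choosing $y$ so that $\log y$ matches the transition scale $\sigma - \tfrac12$, in direct analogy with the balancing performed in \cite[Ch.~13]{MV}. Beyond this analytic subtlety, the argument is a routine generalization, since \cref{cor:zero-count} and \eqref{eq:lambda-K-bound} serve as direct substitutes for their Riemann zeta counterparts.
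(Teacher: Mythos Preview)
Your proposal is correct and takes essentially the same approach as the paper, which simply defers to \cite[Theorem~13.13]{MV} and points to the Riesz-summation computation already written out in \cref{lem:high-deriv-log-deriv}, noting that the only genuine change is in the treatment of the trivial zeros (their location and multiplicity for general $K$, handled as in \eqref{eq:trivial-zero-sum}). One small correction: the optimization you describe at the end, taking $\log y\asymp\sigma-\tfrac12$ with $y\in(1,e)$, is not the right balance---in \cite{MV} one takes $x=y=(\log\tau)^2$, so $\log y\asymp\log\log\tau$, which is what makes the far-zero contribution $\ll x^{1/2-\sigma}(\log\tau)/\log y$ come out as $O((\log\tau)^{2-2\sigma})$ without a spurious $\log\log\tau$; your choice would lose that factor on the far zeros.
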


This lemma is proven essentially identically to the corresponding result from \cite{MV}, and a proof along very similar lines is given in this paper for \cref{lem:high-deriv-log-deriv}. In particular, the only modification to the proof in \cite{MV} that is needed results from the location and multiplicity of trivial zeros (see \eqref{eq:trivial-zero-sum}). We remark that, in light of \eqref{eq:lambda-K-bound}, this lemma implies the weaker bound
\begin{equation}\label{eq:weak-MV-13.13}
\left \vert \frac{\zeta_K'(s)}{\zeta_K(s)}\right \vert \leq n_K\sum_{n \leq (\log \tau)^2} \frac{\Lambda(n)}{n^\sigma} + O((\log \tau)^{2 - 2\sigma}),
\end{equation}
in which a multiplicative constant is sacrificed for conceptual simplicity.

\begin{lem}[cf.~\protect{\cite[Corollary~13.16]{MV}}]\label{cor:MV-13.16} Assume the Riemann hypothesis for $\zeta_K(s)$. Then, for $|t|~\geq 1$,
$$
|\log\zeta_K(s)|~\leq n_K\cdot\begin{cases}
    \log\frac1{\sigma-1}+O(\sigma-1)
    &\text{if }
    1+\frac1{\log\log\tau}\leq\sigma\leq\frac32\\
    \log\log\log\tau+O(1)
    &\text{if }
    1-\frac1{\log\log\tau}\leq\sigma\leq1+\frac1{\log\log\tau}\\
    \log\frac1{1-\sigma}+O\left(\frac{(\log \tau)^{2-2\sigma}}{(1-\sigma)\log\log\tau}\right)
    &\text{if }
    \frac12+\frac1{\log\log\tau}\leq\sigma\leq1-\frac1{\log\log\tau}.
    \end{cases}
$$
\end{lem}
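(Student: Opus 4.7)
The plan is to adapt the proof strategy for \cite[Corollary~13.16]{MV}, using the weaker bound \eqref{eq:weak-MV-13.13} to extract a uniform $n_K$ factor. Set $\sigma_0=1+\frac{1}{\log\log\tau}$ and $\sigma_1=1-\frac{1}{\log\log\tau}$, and treat the three strips separately, cascading the bound leftward.

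For the rightmost strip, I would use the absolutely convergent Euler product $\zeta_K(s)=\prod_{\mathfrak{p}}(1-N\mathfrak{p}^{-s})^{-1}$ and take the series expansion of its logarithm termwise. Grouping prime ideals by the rational prime below (at most $n_K$ ideals above each $p$, with $N\mathfrak{p}\geq p$) yields $|\log\zeta_K(s)|\leq n_K\log\zeta(\sigma)$. Then the Laurent expansion $\zeta(\sigma)=(\sigma-1)^{-1}+\gamma+O(\sigma-1)$ gives $\log\zeta(\sigma)=\log\frac{1}{\sigma-1}+O(\sigma-1)$, completing the rightmost case.

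For the middle strip, I would start from the identity
\[
\log\zeta_K(\sigma+it)=\log\zeta_K(\sigma_0+it)-\int_\sigma^{\sigma_0}\frac{\zeta_K'(u+it)}{\zeta_K(u+it)}\,du,
\]
bounding the first term by the rightmost-strip estimate to get $n_K(\log\log\log\tau+O(1))$. For the integral, I would apply \eqref{eq:weak-MV-13.13} and exchange sum and integration. The key observation is that for $n\leq(\log\tau)^2$ and $u\in[\sigma,\sigma_0]$ one has $n^{\sigma_0-u}\leq e^4$, since the exponent is at most $2/\log\log\tau$; hence $(n^{-\sigma}-n^{-\sigma_0})/\log n\ll(\sigma_0-\sigma)n^{-\sigma_0}$, and summation against $\Lambda(n)$ produces $\ll(\sigma_0-\sigma)|\zeta'/\zeta(\sigma_0)|\ll(\sigma_0-\sigma)\log\log\tau\ll 1$. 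The residual $O((\log\tau)^{2-2u})$ integrates to $O(1)$ since $(\log\tau)^{2-2u}\leq e^2$ throughout the strip. This yields the middle case.

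For the leftmost strip, I would integrate instead from $\sigma_1$ down to $\sigma$, seeding with the middle-strip estimate at $\sigma_1$. Here one no longer has uniform control on $n^{\sigma_1-u}$, but the simple estimate $(n^{-\sigma}-n^{-\sigma_1})/\log n\leq n^{-\sigma}/\log n$ reduces the main sum to $\sum_{n\leq(\log\tau)^2}\Lambda(n)/(n^\sigma\log n)$, which by partial summation from $\psi(u)\sim u$ is $\ll(\log\tau)^{2-2\sigma}/((1-\sigma)\log\log\tau)$; the condition $(1-\sigma)\log\log\tau\geq 1$ holds throughout this strip, ensuring the boundary term dominates. The integral of the $(\log\tau)^{2-2u}$ term contributes the same order. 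Finally, I would replace the leading $n_K\log\log\log\tau$ by $n_K\log\frac{1}{1-\sigma}$: writing $y=(1-\sigma)\log\log\tau\geq 1$, the discrepancy is $\log y$, while the error term is of order $e^{2y}/y$, which dominates $\log y$ easily for $y\geq 1$. The principal technical obstacle is the uniformity of this last step across the whole leftmost strip, and in particular verifying that the partial-summation bound on $\sum_{n\leq(\log\tau)^2}\Lambda(n)/(n^\sigma\log n)$ holds with an absolute constant for all $\sigma\in[\tfrac12+\tfrac1{\log\log\tau},\sigma_1]$.
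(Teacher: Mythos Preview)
Your proposal is correct and follows essentially the same approach as the paper: the paper's proof is a one-line remark that the argument is identical to that of \cite[Corollary~13.16]{MV} with \eqref{eq:weak-MV-13.13} substituted for \cite[Theorem~13.13]{MV}, and you have faithfully unpacked exactly those details---the Euler-product bound in the rightmost strip, and the leftward integration of the logarithmic derivative bound \eqref{eq:weak-MV-13.13} through the other two strips. Your verification of the absorption of $\log\log\log\tau$ into $\log\frac{1}{1-\sigma}$ via the substitution $y=(1-\sigma)\log\log\tau$ is a clean way to handle the one point the MV argument leaves implicit.
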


The proof of this corollary is identical to the proof of \cite[Corollary 13.16]{MV}, with \eqref{eq:weak-MV-13.13} used in the place of \cite[Theorem 13.13]{MV}. We also need two additional results about the zeros of $\zeta_K(s)$, direct analogues of \cite[Lemma 13.19]{MV} and \cite[Lemma 13.20]{MV}, with analogous proofs. Firstly, the bound in \cref{lem:MV-13.13} can be used to show that
\begin{equation}\label{eq:MV-13.19}
N_K\left(T+\frac1{\log\log T}\right)-N_K(T)\ll \frac{\log T}{\log\log T},
\end{equation}
a refinement of \cref{cor:zero-count}. Secondly, an application of \cref{lem:MV-12.1}, combined with the counting result from the preceding equation, shows that, for $|t|~\geq 1$ and $|\sigma-\frac{1}{2}|~\leq \frac{1}{\log \log \tau}$, then
\begin{equation}\label{eq:MV-13.20}
\frac{\zeta_K'(s)}{\zeta_K(s)}=\sum_{|\gamma-t|\leq 1/\log\log\tau}\frac1{s-\rho}+O(\log\tau).
\end{equation}
These results are enough to show \cref{lem:large-horiz-strip} following the proof of \cite[Theorem 13.22]{MV}. 

\begin{proof}[Proof of \cref{lem:large-horiz-strip}]

\cref{lem:MV-10.5} implies that we need only to consider $\frac{1}{2}<\sigma<2$. From here, \cref{cor:MV-13.16} shows the result for $\sigma>\frac{1}{2}+\frac{1}{\log\log T}$, and integrating \eqref{eq:MV-13.20} over a short horizontal segment gives a bound of the desired form, but in terms of the distances between $T$ and various ordinates of the zeros of $\zeta_K(s)$. Integrating this bound over $n\leq T<n+1$ for any $n$ gives that the result holds on average, and thus for some fixed $T=T_n$, as desired.
\end{proof}

\subsection{Bounds on residues at the trivial zeros}

We now prove \cref{lem:triv-zero-bound}, following the method of the proof of \cref{lem:residue-at-zero}.

\begin{proof}[Proof of \cref{lem:triv-zero-bound}] First, the residue at $0$ can be written as
$$\lim_{s\to 0}\frac{d^{r_1+r_2-1}}{ds^{r_1+r_2-1}}\frac{s^{r_1+r_2-1}x^s}{\zeta_K(s)}=\lim_{s\to 0}\frac{d^{r_1+r_2-1}}{ds^{r_1+r_2-1}}x^sf(s)$$
for some $f$ independent of $x$ with neither a pole nor a zero at $s=0$. Evaluating the derivative explicitly yields that this residue is polylogarithmic in $x$, with exponent $r_1 + r_2 - 1$. 

For $k>0$, by \cref{thm:fe}, we write
\begin{equation}\label{eqn:expandedfe}
    \frac{x^s}{s\zeta_K(s)} = \frac{2^{r_2}\pi^{n_K}}{D_K^{1/2}} \left(\frac{x D_K}{(2\pi)^{n_K}}\right)^s\left( \frac{1}{s\zeta_K(1-s)} \right) \left( \frac{1}{\sin \frac{\pi s}2} \right)^{r_1} \left( \frac{1}{\sin \pi s} \right)^{r_2} \left( \frac{1}{\Gamma(1-s)} \right)^{n_K},
\end{equation}
where we have used the reflection formula $\Gamma(s)\Gamma(1-s) = \pi / \sin \pi s$. At $s = -k$, none of these terms are zero. The fourth term has a pole of order $r_1$ if $k$ is even, and the fifth term has a pole of order $r_2$ regardless. Define $\epsilon$ to be $1$ if $k$ is even and $0$ otherwise, so that $\frac{x^s}{s\zeta_K(s)}$ has a pole of order $r_1\epsilon+r_2$ at $s=-k$. Define
\begin{align*}
f(s) &=\frac{(s+k)^{r_1\epsilon+r_2}x^s}{s\zeta_K(s)}\\
g_1(s)&=s\log\left(\frac{x\adisc_K}{(2\pi)^{n_K}}\right)\\
g_2(s)&=\log(s\zeta_K(1-s))\\
g_3(s)&=\epsilon\log(s+k)-\log\sin\frac{\pi s}2\\
g_4(s)&=\log(s+k)-\log\sin\pi s\\
g_5(s)&=\log\Gamma(1-s)
\end{align*}
so that
$$g(s)\coloneqq\log f(s)=\log\left(\frac{2^{r_2}\pi^{n_K}}{\adisc_K^{1/2}}\right)+g_1(s)-g_2(s)+r_1g_3(s)+r_2g_4(s)-n_Kg_5(s),$$
and so that
$$\res_{s=-k}\frac{x^s}{s\zeta_K(s)}=\lim_{s\to -k}\frac{d^{r_1\epsilon+r_2-1}}{d^{r_1\epsilon+r_2-1}s}f(s).$$
We may show that, for $\ell\leq n_K$,
\begin{align*}
\left|g_1^{(\ell)}(k)\right|&\leq \log\left(\frac{xD_K}{(2\pi)^{n_K}}\right)\\
\left|g_2^{(\ell)}(k)\right|,\left|g_3^{(\ell)}(k)\right|,\left|g_4^{(\ell)}(k)\right|&\ll_{r_1,r_2} 1\\
\left|g_5^{(\ell)}(k)\right|&\ll \log k.
\end{align*}
This shows that $g^{(\ell)}(s)\ll_{r_1,r_2} \log(kx\adisc_K)$. In addition,
\begin{align*}
\left|\lim_{s\to -k}f(s)\right|
&=\left|\left(\frac{2^{r_2}\pi^{n_K}}{\adisc_K^{1/2}}\right)
\frac{\left(\frac{x\adisc_K}{(2\pi)^{n_K}}\right)^{-k}}{\lim_{s\to -k}s\zeta_K(1-s)}\left(\frac 2\pi\right)^{\epsilon r_1}\left(\frac 1\pi\right)^{r_2}\frac1{k!^{n_K}}\right|\\
&\ll_{r_1,r_2}\frac{(2\pi)^{kn_K}}{x^k\adisc_K^{k+1/2}k!^{n_K}}\cdot \frac{1}{\left|\lim_{s\to -k}s\zeta_K(1-s)\right|}\ll \frac{(2\pi)^{kn_K}}{kx^k\adisc_K^{k+1/2}k!^{n_K}}.
\end{align*}
As in the proof of \cref{lem:residue-at-zero}, since $f'(s)=g'(s)f(s)$, we have that $f^{(j)}(s)/f(s)$ is a polynomial of degree at most $j$ in the first $j$ derivatives of $g$. This finishes the proof.
\end{proof}

\subsection{Proof of the explicit formula}
To prove \cref{lem:explicit-formula}, the explicit formula for $M_K(x)$, we need a rather precise truncated form of Perron's formula. This can be found by combining the statements of \cite[Lemma 3.12]{Titchmarsh} and \cite[Lemma 2]{Ng}, and a similar result can be found in \cite[Corollary 5.3]{MV}.

\begin{lem}\label{lem:perron} Suppose $f(s)=\sum_{n=1}^\infty a_nn^{-s}$ is absolutely convergent for $\Real(s)>1$, and let $\Phi(n)$ be a positive and non-decreasing function such that $a_n\ll\Phi(n)$. Assume that
$$\sum_{n=1}^\infty \frac{|a_n|}{n^\sigma}=O\left(\frac1{(\sigma-1)^\alpha}\right)$$
as $\sigma$ tends to $1$ from above. Pick $w=u+iv$ and $c>0$ such that $u+c>1$, and pick $T>0$. Then, for $x>0$,
\begin{dmath*} 
\sum_{n \leq x}\frac{a_n}{n^w}=\frac1{2\pi i}\int_{c-iT}^{c+iT}f(w+s)\frac{x^s}sds+O\left(\frac{x^c}{T(u+c-1)^\alpha}+\frac{\Phi(2x)x^{1-u}\log x}{T}+E_1(x,T)\right),
\end{dmath*}
where
$$E_1(x,T)\ll \Phi(2x)x^{-u}$$
if $x > 1$, and, if $N$ is the nearest integer to $x$ (besides possibly $x$ itself),
$$E_1(x,T)\ll \frac{\Phi(N)x^{1-u}}{T|x-N|}$$
for any $x>0$.
\end{lem}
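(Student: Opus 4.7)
The approach is to insert the Dirichlet expansion $f(w+s) = \sum_{n=1}^\infty a_n n^{-w-s}$, which converges absolutely on the contour $\Real(s) = c$ since $u + c > 1$, and interchange the sum and integral. Absolute convergence on the compact segment justifies the swap, reducing the right-hand integral to
$$\sum_{n=1}^\infty \frac{a_n}{n^w}\cdot\frac{1}{2\pi i}\int_{c-iT}^{c+iT}\frac{(x/n)^s}{s}\,ds,$$
so that the problem becomes evaluating each inner truncated Mellin integral at $y = x/n$ and bounding the resulting errors.

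The key input is the standard effective Mellin inversion identity: for $y > 0$ with $y \neq 1$,
$$\frac{1}{2\pi i}\int_{c-iT}^{c+iT}\frac{y^s}{s}\,ds = \delta(y) + O\!\left(\frac{y^c}{T|\log y|}\right),$$
where $\delta(y) = 1$ if $y > 1$ and $\delta(y) = 0$ if $0 < y < 1$; the integral at $y = 1$ equals $\tfrac12 + O(1/T)$. I would derive this by shifting the contour horizontally to $\Real(s) \to \pm\infty$, picking up the residue at $s=0$ when $y > 1$, and estimating the horizontal legs via $|y^s/s|~\leq y^c/|t|$.

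Applying the identity at $y = x/n$, the principal terms $\delta(x/n)$ reconstruct $\sum_{n \leq x}' a_n/n^w$ in the convention of the lemma, and the $O$-terms produce the total error, which I would bound by splitting the range of $n$ according to proximity to $x$. When $n \leq x/2$ or $n \geq 2x$ one has $|\log(x/n)|~\geq \log 2$, so this range contributes at most $\tfrac{x^c}{T}\sum_n |a_n| n^{-(u+c)} \ll \tfrac{x^c}{T(u+c-1)^\alpha}$ by the hypothesized growth of the Dirichlet series as $\sigma \to 1^+$. When $x/2 < n < 2x$ with $|n-x|~\geq 1$, the elementary inequality $|\log(x/n)| \gg |x-n|/x$ combined with $|a_n|~\leq \Phi(2x)$ bounds this range by $\tfrac{\Phi(2x) x^{1-u}}{T}\sum_{1 \leq k \ll x}k^{-1} \ll \tfrac{\Phi(2x)x^{1-u}\log x}{T}$.

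The main obstacle, and the source of the piecewise form of $E_1$, is the isolated contribution of the nearest integer $N$ to $x$. The unconditional bound $E_1 \ll \Phi(N)x^{1-u}/(T|x-N|)$ for general $x > 0$ follows from applying the Mellin identity at $y = x/N$ and invoking $|\log(x/N)|~\asymp |x-N|/N$. The alternative $E_1 \ll \Phi(2x)x^{-u}$ valid for $x > 1$ is obtained not from the oscillatory integral but by absorbing the entire term $a_N/N^w$, of modulus $\leq \Phi(N)/N^u \ll \Phi(2x)x^{-u}$ (using $N \asymp x$ for $x > 1$), directly into the error. The delicate bookkeeping will be to verify that these two forms of $E_1$ are internally consistent, that no double counting occurs between the bulk estimates above and the singled-out nearest-integer term, and that the subcase $x \in \mathbb{Z}$ (where $N = x$ and the Mellin identity at $y = 1$ yields a half-weight plus an $O(1/T)$ tail) is correctly reconciled with the $\sum_{n \leq x}'$ convention.
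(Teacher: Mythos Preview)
Your proposal is correct and follows the standard argument for truncated Perron formulae. Note, however, that the paper does not actually prove this lemma: it states the result and cites \cite[Lemma~3.12]{Titchmarsh}, \cite[Lemma~2]{Ng}, and \cite[Corollary~5.3]{MV} as sources, so there is no ``paper's own proof'' to compare against. Your outline is precisely the classical argument found in those references---substitute the Dirichlet series, swap sum and integral, apply the effective Mellin identity for $\int y^s s^{-1}\,ds$, and split the error by distance from $x$---so you are in full agreement with the cited literature.
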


We now give a bound on the integral found in our application of Perron's formula. This is done separately from the main proof of the explicit formula in order to elucidate the locations where the appropriate error terms arise. 

\begin{lem}\label{lem:integral-bound-for-ef} Fix a number field $K$, and let $\mathcal T$ be as in \cref{lem:large-horiz-strip}. For any $x>0$, $0<T\in\mathcal T$, and $1<c \leq 2$, write
\begin{dmath*}
\frac1{2\pi i}\int_{c-iT}^{c+iT}\frac{x^s}{s\zeta_K(s)}ds= \sum_{|\gamma| < T} \frac{x^{\rho}}{\rho \zeta_K'(\rho)} + \sum_{k=0}^{\infty}\res_{s=-k}\frac{x^s}{s\zeta_K(s)}+E_2(x,c,T).
\end{dmath*}
Then, for any $\epsilon>0$, $E_2(x,c,T)=O_x(1/T^{1-\epsilon})$. When $x>1$, $E_2(x,c,T)=O(x^c/T^{1-\epsilon})$ uniformly.
\end{lem}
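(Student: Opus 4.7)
My plan is to apply the residue theorem to the closed rectangular contour $\mathcal{R}$ with vertices $c \pm iT$ and $-U \pm iT$, traversed counterclockwise, where $U = N + \tfrac12$ for a positive integer $N$ so that $\mathcal{R}$ avoids every zero of $\zeta_K(s)$. Inside $\mathcal{R}$, the integrand $x^s/(s\zeta_K(s))$ has poles precisely at the nontrivial zeros $\rho$ with $|\gamma| < T$, at $s=0$, and at the trivial zeros $s=-1,-2,\dots,-N$. The residue theorem then yields
\[
\frac{1}{2\pi i}\int_{c-iT}^{c+iT} \frac{x^s}{s\zeta_K(s)}\,ds = \sum_{|\gamma|<T}\frac{x^\rho}{\rho\zeta_K'(\rho)} + \sum_{k=0}^N \res_{s=-k}\frac{x^s}{s\zeta_K(s)} - \frac{1}{2\pi i}\bigl(I_{\mathrm{top}}+I_{\mathrm{bot}}+I_{\mathrm{left}}\bigr),
\]
where $I_{\mathrm{top}}, I_{\mathrm{bot}}, I_{\mathrm{left}}$ denote the integrals along the top, bottom, and left edges of $\mathcal{R}$. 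On sending $N\to\infty$, the tail $\sum_{k>N}$ of residues is absolutely summable by \cref{lem:triv-zero-bound}, so the partial sum extends to the full $\sum_{k=0}^\infty$ in the statement and $E_2(x,c,T)$ is identified as the limit of the three edge integrals.

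For the left vertical integral $I_{\mathrm{left}}$, I would combine \cref{thm:fe} with Stirling's formula to obtain $|\zeta_K(-U+it)|^{-1} \ll (|t|+U)^{-(U+1/2)n_K^2}$ uniformly for $|t|\leq T$. Since $|x^{-U+it}|=x^{-U}$, the whole integral is $\ll Tx^{-U}(T+U)^{-(U+1/2)n_K^2}$, which vanishes as $N\to\infty$ for every fixed $x>0$ and $T>0$. Thus $I_{\mathrm{left}}$ never contributes to $E_2$; it is used only to close the contour.

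The crux is estimating the horizontal edges $I_{\mathrm{top}}, I_{\mathrm{bot}}$, which I would split at $\sigma = -1$. On $[-1,c]$ the hypothesis $T\in\mathcal{T}$ and \cref{lem:large-horiz-strip} give $|\zeta_K(\sigma\pm iT)|^{-1} \leq \exp(C\log T/\log\log T) \ll_\epsilon T^\epsilon$ uniformly in $\sigma$. Combining with $|\sigma\pm iT|^{-1}\leq T^{-1}$, this portion is
\[
\ll T^{\epsilon-1}\int_{-1}^c x^\sigma\,d\sigma \ll \begin{cases} x^c T^{\epsilon-1} & \text{if } x\geq 1, \\ x^{-1}T^{\epsilon-1} & \text{if } 0<x<1,\end{cases}
\]
each of the required form. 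On $[-U,-1]$ the functional equation, together with a Stirling estimate extending \cref{lem:MV-10.5} to $\sigma\leq-1$, gives $|\zeta_K(\sigma\pm iT)|^{-1}\ll T^{(\sigma-1/2)n_K^2}$, so the integral $\int_{-\infty}^{-1}x^\sigma T^{(\sigma-1/2)n_K^2-1}\,d\sigma$ converges absolutely and is negligible compared with the $[-1,c]$ piece.

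The main obstacle is producing both bounds from a single contour shift: a uniform $O(x^c/T^{1-\epsilon})$ that is valid when $x>1$, and an $x$-dependent $O_x(1/T^{1-\epsilon})$ valid for general $x>0$. The split at $\sigma=-1$ is precisely what accomplishes this: the $\sigma=c$ endpoint of the $[-1,c]$ integral dominates when $x\geq 1$, whereas for $0<x<1$ the $\sigma=-1$ endpoint produces a benign $x^{-1}$ factor that is absorbed into the implicit constant. The remaining bookkeeping — ensuring that the extension of \cref{lem:MV-10.5} to very negative $\sigma$ gives an integrable bound, and verifying that the trivial-zero tail inside the sum dominates the vanishing left edge — is routine.
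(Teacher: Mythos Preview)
Your proposal is correct and follows essentially the same approach as the paper: the same rectangular contour with $U$ a half-integer, the same split of the horizontal edges at $\sigma=-1$, the same use of \cref{lem:large-horiz-strip} on $[-1,c]$ and of the functional equation with Stirling on $(-\infty,-1]$ and on the left edge, yielding the identical bound $\max(x^c,x^{-1})/T^{1-\epsilon}$.
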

\begin{proof} Pick a sufficiently large positive half-integer $U$ (we will let $U\to \infty$), and integrate around the rectangular contour with vertices $c\pm iT$ and $-U\pm iT$. This gives
\begin{dmath}\label{eq:split-contour}
\frac1{2\pi i}\int_{c-iT}^{c+iT}\frac{x^s}{s\zeta_K(s)}ds=\sum_{|\gamma|<T}\frac{x^\rho}{\rho\zeta_K'(\rho)}+\sum_{k=0}^{\lfloor U\rfloor} \res_{s=-k}\frac{x^s}{s\zeta_K(s)}+\frac1{2\pi i}\left(\int_{c-iT}^{-U-iT}+\int_{-U-iT}^{-U+iT}+\int_{-U+iT}^{c+iT}\right)\frac{x^s}{s\zeta_K(s)}ds.
\end{dmath}
We first bound these integrals using the functional equation for $\zeta_K(s)$ (\cref{thm:fe}). First, we recall the identities and bounds
\begin{align*}
\Gamma(s)\Gamma(1-s) &= \frac{\pi}{\sin \pi s}\\
\Gamma(s)^{-1}&\ll e^{\sigma-(\sigma-1/2)\log\sigma+\pi|t|/2}\\
|\sin\pi s|^{-1}&\ll e^{-\pi |t|}\\
\left|\sin\frac{\pi s}2\right|^{\pm 1}&\ll e^{\pm\pi |t|/2}\\
\zeta_K(s)^{-1}&\ll 1\text{ for }\sigma>2,
\end{align*}
which hold for $s$ bounded away from integers. As the paths of the three integrals in \eqref{eq:split-contour} always stay a distance of $\frac{1}{4}$ away from each integer, we have for $s$ on such a path, and with real part not between $-1$ and $2$, that
\begin{equation}\label{eq:integrand-bound}
\frac{x^s}{s\zeta_K(s)}\ll_K \frac{\left(x\adisc_K/(2\pi)^{n_K}\right)^\sigma}{T}e^{n_K\left[1-\sigma-(1/2-\sigma)\log(1-\sigma)\right]}.
\end{equation}
Thus, the middle integral is asymptotically at most $x^Ue^{-U\log U+O(U)}$ uniformly in $x$ and $T$ as $U\to\infty$. For the two horizontal integrals, we first bound the portion where $\Real(s)$ runs from $-1$ to $-U$. Here, we may use \eqref{eq:integrand-bound}. Since the integral in the right-hand side of
$$\int_{-1 + iT}^{-U + iT}\frac{x^s}{s\zeta_K(s)}\,ds \ll \frac1T\int_{-1}^{-U}\left(\frac{x\adisc_K}{(2\pi)^{n_K}}\right)^\sigma e^{n_K\left[1-\sigma-(1/2-\sigma)\log(1-\sigma)\right]}\,d\sigma$$
converges as $U\to\infty$, this is $O(1/T)$ for fixed $x$, and for $x>1$ the dependence is $O(x^{-1})$. As a result, we may write
\begin{dmath*}
\frac1{2\pi i}\int_{c-iT}^{c+iT}\frac{x^s}{s\zeta_K(s)}ds=\sum_{|\gamma|<T}\frac{x^\rho}{\rho\zeta_K'(\rho)}+\sum_{k=0}^\infty \res_{s=-k}\frac{x^s}{s\zeta_K(s)}+\frac1{2\pi i}\left(\int_{c-iT}^{-1-iT}+\int_{-1+iT}^{c+iT}\right)\frac{x^s}{s\zeta_K(s)}ds+E_3(x,T),
\end{dmath*}
where $E_3(x,T)=O_x(1/T)$ for $x > 0$, and $E_3(x,T)=O(1/xT)$ for $x>1$. Finally, for the remaining integrals, we use the definition of $\mathcal T$. Since $|\zeta_K(\sigma\pm iT)|^{-1}\ll T^\epsilon$ uniformly in $-1\leq \sigma \leq 2$ for $T\in\mathcal T$ and any $\epsilon>0$, we have
$$\frac{x^s}{s\zeta_K(s)}\ll \frac{\max(x^c,1/x)}{T^{1-\epsilon}}$$
for any $s$ on one of our remaining line segments. This bounds our integrals by $\max(x^c,1/x)/T^{1-\epsilon}$, which is enough.
\end{proof}

We are now ready to show the explicit formula for $M_K(x)$.

\begin{proof}[Proof of \cref{lem:explicit-formula}] First, note that $|\mu_K(n)|$ is bounded by the coefficient of $n^{-s}$ in the Dirichlet series expansion of $\zeta_K(s)$. As a result,
$$\sum_{n=1}^\infty \frac{|\mu_K(n)|}{n^\sigma}=O\left(\frac{1}{\sigma - 1}\right) \text{ as } \sigma \rightarrow 1^+,$$
since the pole of $\zeta_K(s)$ at $s = 1$ is simple. Define
$$\Phi(x)=x^{\frac{C}{\log\log(x+10)}}$$
where $C$ is as in \cref{lem:dedekind-coeff-bound}, so that $\Phi(x)$ is positive, non-decreasing, and satisfies $\mu_K(n)\ll \Phi(n)$. Now, apply \ref{lem:perron} to $f(s)=\frac1{\zeta_K(s)}$, $w=0$, $\alpha=1$, $T>0$, and with $1<c\leq 2$ to be chosen later. This gives
$$M_K(x)=\frac1{2\pi i}\int_{c-iT}^{c+iT}\frac{x^s}{s\zeta_K(s)}ds+O\left(\frac{x^c}{T(c-1)}+\frac{\Phi(2x)x\log x}{T}+E_1(x,T)\right),$$
where $E_1(x,T)$ is as in the statement of \cref{lem:perron}. By \cref{lem:integral-bound-for-ef}, we have
\begin{dmath*}
M_K(x)={\sum_{|\gamma|\leq T} \frac{x^\rho}{\rho\zeta_K'(\rho)}+\sum_{k=0}^{\infty}\res_{s=-k}\frac{x^s}{s\zeta_K(s)}}+O\left(\frac{x^c}{T(c-1)}+\frac{\Phi(2x)x\log x}{T}+E_1(x,T)+E_2(x,c,T)\right).
\end{dmath*}
This is enough to show our inequality on the error term when $x > 0$, as each term in the above is asymptotically bounded by $T^{-(1-\epsilon)}$ (any $1<c\leq2$ will suffice). When $x > 1$, we have
$$E(x,T)\ll \frac{x^c}{T(c-1)}+\frac{\Phi(2x)x\log x}{T}+\frac{x^c}{T^{1-\epsilon}}+\Phi(2x).$$
Setting $c=1+(\log x)^{-1}$ gives
$$E(x,T)\ll\frac{x^{1+C/\log \log x}\log x}T+\frac{x}{T^{1-\epsilon}}+x^{C/\log\log x}.$$

Finally, \cref{lem:triv-zero-bound} implies that the sum of the residues at negative integers is convergent as a series in $k$, and its dependence on $x$ is $O((\log x)^{r_1 + r_2})$ depending on $K$.
\end{proof}

\section{Speculations on one-sided growth of Mertens functions}
\label{sec:one-sided-growth}
We have mostly avoided the discussion of the fields $K$ for which $\zeta_K(s)$ has nontrivial non-simple zeros. Although these fields are not problematic if the goal is to disprove the \mertensconjecture over $K$, they pose a distinct challenge when trying to study properties like limiting distributions of normalizations of $M_K(x)$. In this setting, stronger hypotheses are needed. In this section, we posit and motivate a particularly strange property that Mertens functions of non-abelian number fields may satisfy.

\begin{conj}\label{conj:sign-change}
There are infinitely many number fields $K$ for which the Mertens function $M_K(x)$ changes sign only finitely many times. 
\end{conj}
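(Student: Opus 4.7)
The plan is to exhibit an infinite family of non-abelian Galois number fields $K/\mathbb{Q}$ for which the asymptotic behavior of $M_K(x)$ is governed by an exceptionally high-order vanishing of $\zeta_K(s)$ at $s = \tfrac12$, producing a non-oscillatory dominant term of fixed sign. Specifically, I would seek $K$ satisfying: (i) the Riemann hypothesis for $\zeta_K$; (ii) a zero of $\zeta_K(s)$ at $s = \tfrac12$ of even order $m \geq 2$; and (iii) every other nontrivial zero of $\zeta_K(s)$ has multiplicity strictly less than $m$. Note that (iii) is precisely the borderline case in \cref{thm:mertens-very-fail}(c) where the $\Omega$ cannot be upgraded to $\Omega_\pm$, which is consistent with one-sided growth.

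Assuming (i)--(iii) and an analogue of \eqref{eq:gonek-hejhal-nfield}, I would first extend \cref{lem:explicit-formula} to accommodate multiple zeros (routine, by taking higher-order residues). The Laurent expansion $1/(s\zeta_K(s)) = \sum_{k=1}^m c_k/(s - \tfrac12)^k + \text{analytic}$ has leading coefficient $c_m = 2 \cdot m!\!/\zeta_K^{(m)}(\tfrac12)$, which is nonzero and real of definite sign ($m$ even forces $\zeta_K^{(m)}(\tfrac12) \in \mathbb{R}$). The residue of $x^s/(s\zeta_K(s))$ at $s = \tfrac12$ contributes the polynomial
\begin{equation*}
\sum_{k=1}^m \frac{c_k \, x^{1/2}(\log x)^{k-1}}{(k-1)!\!}
\end{equation*}
to $M_K(x)$, whose leading term $\frac{2m}{\zeta_K^{(m)}(1/2)} x^{1/2}(\log x)^{m-1}$ has a fixed sign. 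Any nontrivial zero $\rho \neq \tfrac12$ on the critical line, having multiplicity $m_\rho < m$, contributes a term of order $O(x^{1/2}(\log x)^{m_\rho - 1}) = o(x^{1/2}(\log x)^{m-1})$, and the aggregated oscillatory contribution from simple zeros is controlled by a bound on $\sum 1/|\rho \zeta_K'(\rho)|$ provided by the Gonek--Hejhal-type hypothesis. Summing all contributions yields $M_K(x) \sim \frac{2m}{\zeta_K^{(m)}(1/2)} x^{1/2}(\log x)^{m-1}$, which is of fixed sign for sufficiently large $x$, so $M_K(x)$ changes sign only finitely often.

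To exhibit infinitely many such $K$, I would invoke Artin factorization: for a non-abelian Galois extension with group $G$, $\zeta_K(s) = \prod_{\chi \in \widehat{G}} L(s, \chi)^{\dim \chi}$. Beginning with an Artin representation $\chi_0$ whose $L$-function vanishes at $s = \tfrac12$ (in the style of Armitage and Serre), one can construct Galois extensions of $\mathbb{Q}$ in which $\chi_0$ appears in the irreducible decomposition of the regular representation. Varying over towers or compositums with auxiliary extensions that preserve the vanishing (for instance, via base change to a cyclic extension of prime degree coprime to $|G|$) should produce an infinite family $\{K_n\}$ for which $\zeta_{K_n}$ has prescribed vanishing at $\tfrac12$.

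The main obstacle is verifying condition (iii): ensuring that the multiplicity of the zero at $s = \tfrac12$ strictly exceeds the multiplicity of every other nontrivial zero of $\zeta_K$. While \cite{hkms} shows that non-abelian $\zeta_K$ have infinitely many zeros of multiplicity $\geq 2$, no known results bound such multiplicities from above, nor rule out spurious high-multiplicity zeros at non-central points (arising, for example, from coincidences of zeros between distinct Artin $L$-functions in the factorization). A convincing proof would thus require new input on zero-multiplicity questions for Artin $L$-functions, or alternatively a clever construction in which the vanishing order of $L(s,\chi_0)$ at $\tfrac12$ can be forced to dominate by sheer magnitude. Any unconditional resolution of the conjecture therefore appears to rest on substantial new progress in non-vanishing theory for Artin $L$-functions, and our plan is necessarily conditional on such progress.
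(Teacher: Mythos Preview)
Your general framework matches the paper's: both identify non-abelian Galois extensions where $\zeta_K(s)$ has a zero at $s=\tfrac12$ of multiplicity strictly exceeding that of every other nontrivial zero, so that the non-oscillatory residue at $s=\tfrac12$ dominates the explicit formula and forces $M_K(x)$ to have eventually constant sign. You also correctly isolate condition~(iii) as the crux.

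The paper, however, resolves your ``main obstacle'' far more concretely than your tower/compositum sketch. Rather than building on a single Armitage--Serre example and hoping base change preserves the multiplicity gap, the paper singles out dicyclic extensions with $\gal(K/\QQ)\cong Q_{4p}$ for odd primes $p$. In such fields the irreducible representations have dimension at most $2$, so under the simplicity hypothesis $\text{SH}_{K/\QQ}$ and the independence conjecture $\text{IC}_{K/\QQ}$ every nontrivial zero $\rho\neq\tfrac12$ has multiplicity at most $2$. On the other hand, work of Louboutin shows that (when $p$ is not totally ramified and the root numbers of the $(p-1)/2$ two-dimensional characters are all $-1$) the central zero has multiplicity exactly $p-1$, and that infinitely many such $K$ exist for each $p$. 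Thus condition~(iii) holds as soon as $p\geq 5$, under standard conjectures on Artin $L$-functions rather than any genuinely new non-vanishing input. Your compositum strategy, by contrast, risks inflating the dimensions of irreducible constituents (and hence the multiplicities of off-center zeros) in tandem with the central multiplicity, so the gap need not persist.

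The paper also spells out the auxiliary moment hypotheses more precisely than your appeal to a Gonek--Hejhal analogue: separate bounds of the shape $\sum_{\rho\text{ simple}}|\zeta_K'(\rho)|^{-2}\ll T$, $\sum_{\rho\text{ double}}|\zeta_K''(\rho)|^{-2}\ll T$, and $\sum_{\rho\text{ double}}|\zeta_K'''(\rho)|/(|\rho||\zeta_K''(\rho)|^2)\ll(\log T)^a$ are needed to control the simple and double contributions in the explicit formula. Since the statement is a conjecture, neither argument is a proof; but the paper's dicyclic construction is what makes the conjecture plausible rather than merely wishful.
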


The heuristics behind this conjecture are covered in \cref{subsec:conj-heuristics}.


\subsection{Complexities of non-abelian number fields}
\label{subsec:nonabelian-complexities}

A main tool in studying the limiting distribution of $M_K(x)/\sqrt{x}$ is the explicit formula (\cref{lem:explicit-formula}). This formula was derived via Perron's formula, which allows one to express $M_K(x)$ as a sum of residues of $\frac{x^s}{s\zeta_K(s)}$. If $\rho$ is a simple zero of $\zeta_K(s)$, then this residue has a particularly simple formula and is of order $x^{1/2}$ as a function of $x$. However, if $\ord_{s = \rho}\zeta_K(s) = n > 1$, then this residue is a linear combination of terms $x^{1/2}(\log x)^k$ where $k < n$. In particular, the coefficients of these terms involve large positive and negative powers of higher derivatives of $\zeta_K(x)$, quantities whose moments are not well understood. 

Assuming some standard conjectures, the zeros of $\zeta_K(s)$ for a non-abelian number field $K$ (with the possible exception of a zero at $s = \frac{1}{2}$) have multiplicity at most the largest dimension of an irreducible complex representation of $\gal(K/\QQ)$. This motivates the following question.

\begin{quest}
If the largest irreducible representation of $\gal(K/\QQ)$ has dimension $n$, then does $M_K(x)/(\sqrt x\log^{n-1}(x))$ have a limiting distribution?
\end{quest}

Moreover, assume that the $\zeta_K(s)$ has a zero at $s = \frac{1}{2}$ of multiplicity larger than that~of~any other nontrivial zero. As described in \cref{prop:noRH-rightmost-real}, this term does not oscillate between large positive and negative values (like terms associated with non-real zeros), and will dominate all others in the explicit formula for $M_K(x)$. Hence, one may ask the following question.

\begin{quest}
If $\ord_{s = \frac{1}{2}}\zeta_K(s) > \ord_{s = \rho} \zeta_K(s)$ for all $\rho \neq \frac{1}{2}$, does $M_K(x)$ change sign finitely many times?
\end{quest}

One may ask if such number fields satisfying the hypothesis of this question even exist. This is answered in the affirmative, assuming appropriate standard conjectures, by the work of Louboutin~\cite{Louboutin1}.

It now remains to identify number fields $K$ of interest, as well as give heuristic bounds on certain sums over zeros, which we turn to now. In what follows, it will be relevant to state the following assumptions on Artin $L$-functions, for a Galois extension of number fields $L/K$:

\begin{conj}[Artin holomorphy conjecture (AHC)]\label{conj:artin-hol} For any character $\chi$ of a nontrivial irreducible representation of $\gal(L/K)$, the function $L(s,\chi,L/K)$ is holomorphic.
\end{conj}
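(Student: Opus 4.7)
The Artin holomorphy conjecture is among the deepest open problems in algebraic number theory, and I should be candid that no complete proof is available with present techniques. My plan would be to follow the classical reduction through Brauer's induction theorem down to one-dimensional characters, identify for which Galois structures the resulting expression yields holomorphy outright, and pinpoint the functoriality obstruction in the remaining cases.

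First I would apply Brauer's induction theorem to write $\chi = \sum_i n_i \mathrm{Ind}_{H_i}^G \psi_i$ as an integral combination of characters induced from one-dimensional characters $\psi_i$ on subgroups $H_i \leq G = \gal(L/K)$. The inductive behaviour of Artin $L$-functions then yields
\begin{equation*}
L(s,\chi,L/K) = \prod_i L(s,\psi_i,L/L^{H_i})^{n_i},
\end{equation*}
and by class field theory each factor is a Hecke $L$-function, hence entire whenever $\psi_i$ is nontrivial. This already delivers meromorphy on all of $\CC$; what remains is to rule out poles coming from the factors with $n_i < 0$.

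The next step would be to promote meromorphy to holomorphy by showing that $\chi$ lies in the non-negative integer cone of monomial characters---equivalently, that $\chi$ is itself monomial, or at the very least that $L(s,\chi,L/K)$ arises as a cuspidal automorphic $L$-function on $\mathrm{GL}_{\dim \chi}$ over $K$. For solvable $G$, the combination of Arthur--Clozel cyclic base change and Langlands' automorphic induction supplies this identification and yields AHC unconditionally. For $\dim \chi = 2$, the Langlands--Tunnell theorem together with subsequent modularity lifting results covers the tetrahedral and octahedral types and a large portion of the icosahedral type. For the dicyclic extensions studied later in the paper, the relevant two-dimensional irreducibles are monomial---induced from characters of the cyclic index-two subgroup---so the Brauer decomposition has non-negative exponents and the previous display yields entirety directly.

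The principal obstacle is the remaining non-solvable setting with a non-monomial irreducible $\chi$: in this generality, producing the cuspidal automorphic representation of $\mathrm{GL}_{\dim \chi}$ over $K$ attached to $\chi$ is tantamount to a genuine case of Langlands functoriality, and no general method is known. Any complete resolution of AHC will therefore require substantial new input beyond the Brauer reduction above, and within the scope of the present paper the conjecture should be taken only as a working hypothesis sufficient to organise the Artin factorisation of $\zeta_K(s)$ in the non-abelian cases under study.
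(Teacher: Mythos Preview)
Your writeup is accurate in substance, but there is nothing to compare it against: the paper states AHC as \cref{conj:artin-hol}, a conjecture, and gives no proof whatsoever. It is invoked only as a standing hypothesis alongside $\text{SH}_{K/\QQ}$ and $\text{IC}_{K/\QQ}$ to organise the Artin factorisation in the dicyclic setting of \cref{sec:one-sided-growth}. So the correct response here is not a proof sketch but the observation that no proof is expected or supplied.

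That said, your survey of the known cases is correct and well-calibrated: Brauer induction gives meromorphy, the solvable case is handled by Arthur--Clozel and Langlands, and for the specific dicyclic groups $Q_{4p}$ appearing in the paper the two-dimensional irreducibles are indeed monomial (induced from the index-two cyclic subgroup), so AHC is actually a theorem in that setting and the paper's reliance on it is harmless there. You might have made that last point more prominent, since it means the heuristics of \cref{subsec:conj-heuristics} do not in fact depend on any unproven instance of AHC.
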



\begin{conj}[Simplicity hypothesis ($\text{SH}_{K/\QQ}$)]\label{conj:artin-sh} If $\chi$ is an irreducible character of $\gal(K/\QQ)$, then the nontrivial zeros of $L(s,\chi,K/\QQ)$ are simple. 
\end{conj}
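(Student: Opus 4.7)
The plan is to attempt the Simplicity Hypothesis by reducing it to an analytic question about Artin $L$-functions in the (conjecturally automorphic) Selberg class and then applying moment and mollifier techniques. The first step is reduction: assuming \cref{conj:artin-hol}, each $L(s,\chi,K/\QQ)$ for irreducible $\chi$ is an entire function admitting a Hadamard product over its nontrivial zeros, and satisfies a standard functional equation. One then also wants to rule out ``accidental'' coincidences between zeros of $L(s,\chi_1,K/\QQ)$ and $L(s,\chi_2,K/\QQ)$ for distinct irreducible $\chi_1,\chi_2$, since any such coincidence would manifest as a multiple zero of $\zeta_K(s)$; this is a natural companion hypothesis to package with SH.

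Second, following the Conrey--Ghosh--Gonek approach for $\zeta(s)$, I would compute the mollified discrete second moment of $L'(\rho,\chi)$ over the nontrivial zeros $\rho = \frac12 + i\gamma$. An asymptotic of the shape
\[ \sum_{|\gamma| \le T} |L'(\rho,\chi)|^2 \sim c_\chi \, T (\log T)^3, \]
combined with the Riemann--von Mangoldt zero-count for $L(s,\chi,K/\QQ)$ and the Cauchy--Schwarz inequality, yields that a positive proportion of the nontrivial zeros are simple. Adapting the Levinson--Conrey mollifier method to the Artin setting, using that the Dirichlet coefficients of $L(s,\chi,K/\QQ)$ are bounded by a fixed power of the divisor function (cf.\ \cref{lem:dedekind-coeff-bound}), should in principle push this proportion above $40\%$; but any such argument produces only a positive proportion and never reaches $100\%$.

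The main obstacle, and the reason this statement is still listed as a conjecture, is the leap from ``a positive proportion of nontrivial zeros are simple'' to ``\emph{every} nontrivial zero is simple.'' Random matrix heuristics (the Keating--Snaith CUE model) predict simplicity almost surely, but no known analytic technique, even conditional on GRH, rules out a sparse exceptional set of multiple zeros for a single $L$-function, let alone uniformly over all Artin $L$-functions attached to $K/\QQ$. A realistic version of this proposal would aim only for a conditional statement such as ``SH for $L(s,\chi,K/\QQ)$ follows from a sufficiently strong pair-correlation conjecture plus GRH,'' or equivalently a quantitative bound of the form $\sum_{|\gamma|\le T} (\ord_{\rho} L(s,\chi,K/\QQ) - 1) = o(1)$, showing that the excess multiplicity contributes negligibly. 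I do not see a path to the full unconditional statement with current technology, and I would honestly report SH as conjectural while executing the partial program outlined above.
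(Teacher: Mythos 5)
This statement is a \emph{conjecture} in the paper, not a theorem, so there is no proof of record to compare your proposal against; it is introduced purely as a hypothesis ($\text{SH}_{K/\QQ}$) that later results are stated to be conditional on. You correctly identify this: your final paragraph explicitly concedes that no known technique, even conditionally, yields simplicity of \emph{every} nontrivial zero, and that only ``positive proportion'' results are accessible. That assessment is accurate, and your survey of the Conrey--Ghosh--Gonek and Levinson--Conrey mollifier approaches is a reasonable account of the state of the art for partial progress. The one thing worth flagging is that your first paragraph folds in a separate hypothesis --- that distinct $L(s,\chi_i,K/\QQ)$ share no nontrivial zeros --- which the paper deliberately keeps separate as $\text{IC}_{K/\QQ}$ (\cref{conj:artin-indep}), and which is stated there with an explicit carve-out allowing a common zero at $s=\frac12$ (needed because root numbers can force central vanishing). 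Conflating SH with IC, or omitting the central-point exception, would misstate what the paper actually assumes. Since you are not claiming a proof, there is no gap to report; the honest conclusion you reach is the correct one.
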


\begin{conj}[Independence conjecture ($\text{IC}_{K/\QQ}$)]\label{conj:artin-indep} If $\chi_1$, $\chi_2$ are distinct irreducible characters of $\gal(K/\QQ)$, then $L(s,\chi_1,K/\QQ)$ and $L(s,\chi_2,K/\QQ)$ share no nontrivial zeros, \emph{except possibly for a zero at $s=\frac{1}{2}$}.
\end{conj}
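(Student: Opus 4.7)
The statement $\text{IC}_{K/\QQ}$ is a deep conjecture on the non-correlation of zeros across distinct Artin $L$-functions, and a complete proof lies well beyond present analytic tools; the plan sketched here is therefore more a description of how one would hope to attack it than a realistic route to a proof. The natural starting point is the tensor product $\chi_1 \otimes \overline{\chi_2}$: since $\chi_1$ and $\chi_2$ are distinct irreducible characters, the decomposition of $\chi_1 \otimes \overline{\chi_2}$ into irreducibles does not contain the trivial representation, so under $\text{AHC}$ the $L$-function $L(s,\chi_1 \otimes \overline{\chi_2},K/\QQ)$ is entire. A shared nontrivial zero $\rho_0 = \frac{1}{2} + i\gamma_0$ with $\gamma_0 \neq 0$ would force a zero of $L(s,\chi_1 \otimes \overline{\chi_2},K/\QQ)$ at $s = \rho_0$ of order at least the sum of the individual orders, and the goal would be to forbid such a coincidence off the central point.

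The hoped-for mechanism would combine three ingredients: an explicit formula applied to $L(s,\chi_1 \otimes \overline{\chi_2},K/\QQ)$ that converts a forced high-order zero into arithmetic information about its Dirichlet coefficients; a quantitative non-vanishing or simplicity-type input that controls how often those coefficients can conspire; and a statistical argument, modeled on the pair-correlation conjecture of Montgomery and the random matrix heuristics of Katz--Sarnak and Hughes--Keating--O'Connell, to the effect that zeros of $L$-functions attached to distinct automorphic representations should be independent in distribution. The exceptional status of $s = \frac{1}{2}$ would be accounted for by the functional equation for $L(s,\chi_1 \otimes \overline{\chi_2},K/\QQ)$, whose sign can force a zero of even order at the central point regardless of the behavior of the individual factors, as already observed in the setting of $\zeta_K(s)$ by Armitage and Serre.

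The main obstacle, which I expect to be essentially insurmountable with current technology, is that no known method yields pointwise (as opposed to statistical or density-type) bounds ruling out arithmetic coincidences of zeros of distinct automorphic $L$-functions. Even granting the Riemann hypothesis, $\text{AHC}$, and $\text{SH}_{K/\QQ}$, separating two specific $L$-functions so that they share no nontrivial zero off $s = \frac{1}{2}$ would require a new ingredient---likely issuing from the conjectural Langlands correspondence and from strong multiplicity-one phenomena for automorphic representations---that goes well beyond the analytic toolkit used in this paper. Accordingly, I would not attempt a proof of $\text{IC}_{K/\QQ}$ here, and the statement should be treated, alongside $\text{AHC}$ and $\text{SH}_{K/\QQ}$, as a working hypothesis whose consequences for the Mertens function in the non-abelian setting can then be explored.
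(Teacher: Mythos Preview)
Your assessment is correct, and it matches the paper's treatment exactly: the statement is labeled as a \emph{conjecture} in the paper, not a theorem, and the paper offers no proof or even heuristic argument for it. It is introduced alongside $\text{AHC}$ and $\text{SH}_{K/\QQ}$ purely as a working hypothesis used to control the multiplicities of the nontrivial zeros of $\zeta_K(s)$ away from $s=\frac{1}{2}$ in the dicyclic setting. So there is nothing to compare your proposal against; your conclusion that $\text{IC}_{K/\QQ}$ should be treated as an assumption rather than proved is precisely what the paper does.
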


\subsection{Candidate number fields}

The first appearance of a number field $K$ satisfying $\zeta_K(\frac{1}{2}) = 0$ occurred in a paper of Armitage~\cite{Armitage}, who studied a degree $48$ extension of $\mathbb{Q}$ which had first appeared in a work of Serre~\cite{Serre}. Later, Serre (unpublished, see~\cite[Chapter~2]{NgThesis}) discovered the simpler example
$$K=\QQ\left(\sqrt{(5+\sqrt 5)(41+\sqrt{205})}\right),$$
which is Galois with quaternionic Galois group $Q_8$. 

In emulation of this latter construction, we identify as candidates for $\gal(K/\mathbb{Q})$ the following well-known class of groups to which $Q_8$ belongs. These groups possess convenient representation-theoretic properties, which allow control over the behavior of $\zeta_K(s)$ at $s=\frac{1}{2}$.

\begin{defn}
Let $n \geq 2$ an integer. We define the \textit{dicyclic group $Q_{4n}$} by the presentation
\begin{equation*}
    Q_{4n} = \langle a, b: a^{2p} = 1, a^{p} = b^{2}, b^{-1} ab = a^{-1} \rangle.
\end{equation*}
This is a solvable non-abelian group of order $4n$.
\end{defn}

In what follows, let $p \geq 3$ be an odd prime, and let $N/\mathbb{Q}$ be a normal extension with Galois group $G = Q_{4p}$, called a \textit{dicyclic number field} of degree $4p$. We recall the Galois theory of such fields, which in be found in the introduction to \cite{Louboutin1}.

If $p$ is not totally ramified in $N/\mathbb{Q}$, then by \cite[Theorem~1]{Louboutin1}, the root number~$W_{N/\mathbb{Q}}(\psi) = W_{N/L}(\chi)$ does not depend on $\chi$ but~just on the field $N$, so either there are $(p-1)/2$ two-dimensional characters with root number $-1$ or there are none. Hence, assuming \hyperref[conj:artin-sh]{$\text{SH}_{N/\QQ}$}, the order of the zero at $s = \frac{1}{2}$ of the Dedekind zeta function is either $p-1$ or~zero. Moreover, there exist infinitely many examples of dicyclic fields $N$ which produce either case. If $p$ is totally ramified in $N/\mathbb{Q}$, which can only occur when $p \equiv 1 \pmod{4}$, and if $L = \mathbb{Q}(\sqrt{p})$, then half of the $(p-1)/2$ quaternionic characters have root number $-1$ and half have root number $+1$. Hence, assuming \hyperref[conj:artin-sh]{$\text{SH}_{N/\QQ}$}, the order of the zero at $s = \frac{1}{2}$ is $(p-1)/2$. 

Now that we have built up the multiplicity of the zero at $s = \frac{1}{2}$, we invoke the conditions \hyperref[conj:artin-sh]{$\text{SH}_{N/\QQ}$} and \hyperref[conj:artin-indep]{$\text{IC}_{N/\QQ}$} in order to tame the multiplicities of all the other nontrivial zeros. Equipped with these assumptions, the largest dimension of an irreducible representation of a dicyclic number field would be $2$, so that all other zeroes besides $s = \frac{1}{2}$ should have order at most~$2$.

\begin{thm}\label{thm:dicyclic-nf-findings}
Let $p \geq 3$ be a prime and let $N$ be a dicyclic number field of order $4p$. Assume \textup{\hyperref[conj:artin-sh]{$\text{SH}_{N/\QQ}$}}.
\begin{enumerate}[label=(\alph*),font=\normalfont]
    \item If $p$ is not totally ramified in $N/\QQ$, and $W(\chi) = -1$ for all characters $\chi$ on $H$ of order $2p$, then the order of the zero of $\zeta_N(s)$ at $s = \frac{1}{2}$ is $p - 1$. 
    \item If $p$ is totally ramified in $N/\QQ$, and $L = \QQ(\sqrt{p})$, then the order of the zero of $\zeta_N(s)$ at $s = \frac{1}{2}$ is $(p-1)/2$.
\end{enumerate}
Assuming moreover \textup{\hyperref[conj:artin-indep]{$\text{IC}_{N/\QQ}$}}, and for $p$ sufficiently large, this order is higher than that of any other nontrivial zero. 
\end{thm}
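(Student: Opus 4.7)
The argument proceeds via Artin factorization of $\zeta_N(s)$ and a character-by-character analysis at $s = \tfrac12$, using \cref{conj:artin-sh} to pin down local orders and \cref{conj:artin-indep} to bound orders elsewhere. Because $Q_{4p}$ is solvable, Aramata--Brauer gives the factorization
\[
    \zeta_N(s) \;=\; \zeta(s) \prod_{\chi \neq 1} L(s,\chi,N/\QQ)^{\dim \chi},
\]
where $\chi$ ranges over the nontrivial irreducible characters of $G = \gal(N/\QQ) \cong Q_{4p}$. For $p$ an odd prime, $G$ has abelianization $\ZZ/4\ZZ$, yielding three nontrivial $1$-dimensional characters (Dirichlet $L$-functions attached to the $\ZZ/4$-subextension), plus $p-1$ irreducible $2$-dimensional characters $\chi_\psi = \mathrm{Ind}_H^G \psi$ induced from nontrivial characters $\psi$ of the unique cyclic index-$2$ subgroup $H \cong \ZZ/2p\ZZ$. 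According to the value $\psi(a^p) = \pm 1$ on the central involution, these $2$-dimensional characters split into $(p-1)/2$ \emph{quaternionic} characters (with $\psi(a^p) = -1$) and $(p-1)/2$ \emph{orthogonal} characters (with $\psi(a^p) = +1$, factoring through the dihedral quotient $G/Z(G) \cong D_{2p}$).

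Next, I would combine \cref{conj:artin-sh} with the functional equation of $L(s,\chi,N/\QQ)$, whose root number $W(\chi) \in \{\pm 1\}$ forces the order of vanishing at the central point to be odd when $W(\chi) = -1$ and even when $W(\chi) = +1$; capping by $1$ through simplicity of nontrivial zeros produces the clean identity
\[
    \ord_{s=1/2} L(s,\chi,N/\QQ) \;=\; \tfrac12\bigl(1 - W(\chi)\bigr), \qquad \ord_{s=1/2} \zeta_N(s) \;=\; \sum_{\chi \neq 1,\; W(\chi) = -1} \dim \chi.
\]
For case (a), the cited theorem of Louboutin, together with our hypothesis on characters of order $2p$, forces $W(\chi) = -1$ for each of the $(p-1)/2$ quaternionic characters and $W(\chi) = +1$ for the orthogonal and abelian characters (the latter being controlled by the unramified behavior at $p$); the total is $2\cdot(p-1)/2 = p-1$. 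For case (b), the same reference shows that precisely half of the $(p-1)/2$ quaternionic characters carry $W = -1$ and the rest carry $W = +1$, while the abelian and orthogonal characters also have $W = +1$ (with the specific choice $L = \QQ(\sqrt p)$ pinning down the split); summing yields $2\cdot(p-1)/4 = (p-1)/2$.

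Finally, for the ``moreover'' clause, I invoke \cref{conj:artin-indep}: any nontrivial zero $\rho \neq \tfrac12$ of $\zeta_N(s)$ lies on exactly one factor $L(s,\chi,N/\QQ)$ in the Artin product, is simple there by \cref{conj:artin-sh}, and hence contributes $\dim\chi \leq 2$ to $\ord_{s=\rho}\zeta_N(s)$. For $p$ sufficiently large, both $p - 1$ (case (a)) and $(p-1)/2$ (case (b)) strictly exceed $2$, so the central order dominates that at any other nontrivial zero. The main technical obstacle is the precise identification of the root numbers of all $p+2$ nontrivial irreducible characters in each case; this amounts to an explicit local class field theory computation depending on the ramification behavior of $p$ in $N/\QQ$, for which we defer to \cite{Louboutin1}.
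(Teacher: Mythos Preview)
Your proposal is correct and follows essentially the same route as the paper. The paper does not give a separate proof environment for this theorem; rather, the argument is the surrounding discussion, which likewise uses Artin factorization, invokes \cite[Theorem~1]{Louboutin1} for the root-number dichotomy among the $(p-1)/2$ quaternionic two-dimensional characters, combines this with \hyperref[conj:artin-sh]{$\text{SH}_{N/\QQ}$} to read off the central order, and then uses \hyperref[conj:artin-indep]{$\text{IC}_{N/\QQ}$} together with $\max_\chi \dim\chi = 2$ to cap the order at every other nontrivial zero. Your write-up is in fact more explicit than the paper's in separating the $p-1$ two-dimensional characters into quaternionic and orthogonal types and in noting that the orthogonal (dihedral-type) factors have $W(\chi)=+1$; the paper leaves this implicit by citing Louboutin.
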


Moreover, there are infinitely many examples of $N$ as in case (a) assuming \hyperref[conj:artin-sh]{$\text{SH}_{N/\QQ}$}.

\subsection{Heuristics in support of one-sided growth of $M_K(x)$}
\label{subsec:conj-heuristics}

From now on, we shall specialize to the scenario of \cref{thm:dicyclic-nf-findings}, where $K \coloneqq N$. Hence, $K$ is a dicyclic number field satisfying \hyperref[conj:artin-sh]{$\text{SH}_{K/\QQ}$} and \hyperref[conj:artin-indep]{$\text{IC}_{K/\QQ}$}, with the function $\zeta_K(s)$ having a zero of multiplicity $\mu \geq 3$ at $s = 1/2$ and all other nontrivial zeros of multiplicity equal to either $1$ (simple) or $2$ (double). In the notation of \cref{lem:explicit-formula}, for all $T_n \in \mathcal{T}$ and $x > 1$ we have, under the Riemann hypothesis for $\zeta_K(s)$,
\begin{equation}\label{eq:dicyclic-explicit-formula}
    M_K(x) = \res_{s = 1/2} \frac{x^s}{s\zeta_K(s)} + \sum_{\substack{|\gamma| \leq T_n \\\rho \text{ simple}}} \res_{s = \rho} \frac{x^s}{s\zeta_K(s)} +\sum_{\substack{|\gamma| \leq T_n \\\rho \text{ double}}} \res_{s = \rho} \frac{x^s}{s\zeta_K(s)} + E(x,T_n),
\end{equation}
As noted already, the leading term above grows one-sidedly in $x$ as $x^{1/2}(\log x)^{\mu - 1}$, so we seek to restrain the growth of the remaining terms. We have
\begin{align*}
    \res_{s = \rho} \frac{x^s}{s\zeta_K(s)}  = \frac{x^{\rho}}{\rho \zeta_K'(\rho)} \text{ and } \res_{s = \rho} \frac{x^s}{s\zeta_K(s)} = \frac{2x^{\rho}}{\rho \zeta_K''(\rho)} \left( \log x - \frac{1}{\rho} - \frac{\zeta_K'''(\rho)}{\zeta_K''(\rho)}\right),
\end{align*}
when $s = \rho$ is a simple and double zero, respectively. In the former case, one suitable bound would be a generalized form of \eqref{eq:gonek-hejhal-nfield} which is applicable to Dedekind zeta functions having nontrivial zeros of multiplicity, but which targets only the simple ones, namely
\begin{equation*}
    \sum_{\substack{0 < \gamma \leq T \\ \rho \text{ simple}}} \frac{1}{|\zeta_K'(\rho)|^2} \ll T,
\end{equation*}
give or take the addition of a low power of $\log T$ to the right-hand side. In this case, we may replace the $T_n$ in the second term of \eqref{eq:dicyclic-explicit-formula} with any $T > 0$, upon inserting an additional error term of
\begin{equation*}
    \left( \frac{x \log T}{T} \right)^{1/2}.
\end{equation*}
A similar estimate pertaining to the double zeros,
\begin{equation*}
    \sum_{\substack{0 < \gamma \leq T \\ \rho \text{ double}}} \frac{1}{|\zeta_K''(\rho)|^2} \ll T,
\end{equation*}
and an estimate of the form
\begin{equation*}
    \sum_{\substack{0 < \gamma \leq T \\ \rho \text{ double}}} \frac{|\zeta_K'''(\rho)|}{|\rho||\zeta_K''(\rho)|^2} \ll (\log T)^a,
\end{equation*}
with $a$ a reasonably small integer, would also come in handy, once again allowing for the~replacement of $T_n$ with any $T > 0$ in \eqref{eq:dicyclic-explicit-formula} in exchange for residual error terms.  Finally, following the arguments in the proof of \cite[Theorem~1]{Ng} would produce the conclusion of \cref{conj:sign-change}, assuming that the resulting contribution of extra powers of $\log x$ has been smaller than $(\log x)^{\mu - 1}$.

\section{Conclusions and conjectures}
\label{sec:conclusion-conjectures}
In this article, we have disproved the \mertensconjecture for most number fields, and have conditionally shown the existence of limiting distributions for abelian number fields. There are, however, a number of questions prompted by our results.

First, as discussed in \cref{sec:one-sided-growth}, it is possible that the Mertens function of certain non-abelian number fields may change sign only finitely many times -- a surprising departure from the behavior over $\mathbb{Q}$. Furthermore, the question of existence of limiting distributions for non-abelian number fields was also posed.

We have also shown in \cref{sec:disproof} that for a fixed signature, $M_K^+ > 1$ for all but finitely many number fields given conventional assumptions on the zero structure of $\zeta_K(s)$. This has been achieved by bounding from below the maxima of the function $x^{-1/2}(M_K(x) + M_K^*(x))$, which suffices for a disproof under the Riemann hypothesis and the simplicity of nontrivial zeros. As a companion to these findings, one may ask instead for upper bounds on the minima of this function, a question which pertains to $M_K^-$.


There are also several questions prompted by the existence of the limiting distributions $\nu_K$ (\cref{thm:limiting-distribution-abelian}). Although the Mertens conjecture has been known to be false, numerical evidence suggests that the bound $|M_\QQ(x)|~\leq \sqrt x$ holds most of the time (in the sense of logarithmic density). Namely, if $\nu_K$ is the logarithmic limiting distribution of $x^{-1/2}M_K(x)$~as in \cref{thm:limiting-distribution-abelian}, then preliminary computations suggest that $\nu_\QQ([-1, 1]) \geq 0.99999993366$ (cf.~\cite{Humphries14}). Nevertheless, it has not yet been proven that $\nu_\QQ([-1, 1]) > 0$. This~motivates the following two questions.

\begin{quest}
How does $\nu_K([-1, 1])$ vary with $K$?
\end{quest}

\begin{quest}
Let $\beta_K = \inf \{\alpha > 0 \mid \nu_K([-\alpha, \alpha]) \geq 0.5\}$. How does $\beta_K$ vary with $K$?
\end{quest}

In other words, is the statement $|M_K(x)|~\leq \sqrt{x}$ correct more or less frequently as the degree and discriminant of $K$ increase? Alternatively, one could study the asymptotic behavior of the distributions $\nu_K$ --- in particular, the behavior of its tails. In \cite[Section 4.3]{Ng}, Ng shows (assuming some conjectures on $J_{-1}(T)$ and $J_{-1/2}(T)$ as in \eqref{eq:gonek-hejhal}) that there are constants $c_1, c_2$ so that

$$
\exp\left(-\exp(c_1V^{\frac{4}{5}})\right) \ll \nu_\QQ([V, \infty)) \ll \exp\left(-\exp(c_2V^{\frac{4}{5}})\right)
$$

In particular, $\nu_\QQ([V, \infty)) > 0$ for all $V$, meaning $M_\QQ(x) > V\sqrt{x}$ a positive proportion of the time. That is, counterexamples to the Mertens conjecture over $\QQ$ have positive logarithmic density. One may ask how these asymptotics generalize to the settings of number fields. Thus, we pose the following two questions.

\begin{quest}
\label{quest:generaliz-ng-heuristic}
For an abelian number field $K$, are there $c_1, c_2$ so that

$$
\exp\left(-\exp(c_1V^{\frac{4}{5}})\right) \ll \nu_K([V, \infty)) \ll \exp\left(-\exp(c_2V^{\frac{4}{5}})\right)?
$$
\end{quest}

\begin{quest}
If \cref{quest:generaliz-ng-heuristic} is answered affirmatively, how should the constants vary with $K$?
\end{quest}


\bibliographystyle{alpha}
\bibliography{bib}

\end{document}